\tikzset{edge/.style = {->,> = latex'}}
\newcolumntype{C}{>{$}c<{$}}
\numberwithin{equation}{section}
\theoremstyle{plain}
\newtheorem{theorem}[equation]{Theorem} 
\newtheorem{corollary}[equation]{Corollary} 
\newtheorem{lemma}[equation]{Lemma}
\newtheorem{proposition}[equation]{Proposition}
\theoremstyle{definition}
\newtheorem{definition}[equation]{Definition}
\newtheorem{hypothesis}[equation]{Hypothesis} 
\newtheorem{remark}[equation]{Remark}
\DeclareMathOperator\Aut{Aut}
\DeclareMathOperator\gr{gr}
\DeclareMathOperator{\lcm}{lcm}
\newcommand\ZZ{\mathbb Z}
\newcommand\cF{\mathcal F}
\newcommand\cQ{\mathcal{Q}}
\newcommand\fsl{\mathfrak sl}
\newcommand\kk{\Bbbk}
\renewcommand{\int}{\mathrm{int}}
\newcommand\inv{^{-1}}
\newcommand\iso{\cong}
\newcommand\tensor{\otimes}
\newcommand\An{\widetilde{A_{n-1}}}
\renewcommand\mod{\mathrm{mod}~}
\newcommand\gtaft{T_\lambda(r,m)}
\renewcommand\th{^{\text{th}}}
\newcommand\set[1]{\left\{#1\right\}}
\newcommand*{\defeq}{\mathrel{\vcenter{\baselineskip0.5ex \lineskiplimit0pt\hbox{\scriptsize.}\hbox{\scriptsize.}}}=} 
\title{Taft algebra actions on preprojective algebras}
\author[Gaddis]{Jason Gaddis}
\address{Department of Mathematics, Miami University, Oxford, Ohio 45056, USA} 
\email{gaddisj@miamioh.edu}
\author[Oswald]{Amrei Oswald}
\address{Department of Mathematics, University of Washington, Seattle, Washington 98195, USA}
\email{amreio@uw.edu}
\subjclass{
16T05,   	
16W50,   	
16W70,   	
16W20,  	
16W22   	
}
\keywords{Quiver, path algebra, preprojective algebra, Taft algebra, Hopf action}
\begin{document}

\begin{abstract}
We classify actions of generalized Taft algebras on preprojective algebras of extended Dynkin quivers of type $A$. This may be viewed as an extension of the problem of classifying actions on the polynomial ring in two variables. In cases where the grouplike element acts via rotation on the underlying quiver, we compute invariants of the Taft action and, in certain cases, show that the invariant ring is isomorphic to the center of the preprojective algebra.
\end{abstract}

\maketitle

\section{Introduction}

In \cite{KW}, Kinser and Walton classified actions of Taft algebras on path algebras of schurian, loopless quivers. This was accomplished, in large part, by reducing the problem to the study of $\ZZ_n$-minimal quivers, on which the actions could be explicitly stated. The results were then extended to actions of the Frobenius-Lusztig kernel and actions of the Drinfeld double of the Taft algebra. Kinser and the second-named author studied actions on path algebras by further Hopf algebras, including the quantum enveloping algebra of $\fsl_2$, the corresponding quantum borel subalgebra, and generalized Taft algebras \cite{KO}.

On the other hand, Allman classified actions of Taft algebras on the polynomial ring in two variables \cite{All}. Actions of Taft algebras on quantum places were classified in \cite{GWY}. The importance of this particular family comes from the fact that quantum planes at roots of unity are the only Artin--Schelter regular algebras of global dimension two expressing finite-dimensional quantum symmetry. This analysis was extended to actions of bosonizations of quantum linear spaces on quantum affine spaces in \cite{CG}. In recent years, Taft algebra actions have been studied in a variety of other situations as well \cite{B,BM,cline,CGW,GW2}.

The goal of this project is to merge these two lines of study in order to construct actions of (generalized) Taft algebras on preprojective algebras of the extended Dynkin quivers $\An$. When $n=1$, this algebra is isomorphic to the polynomial ring in two variables. Hence, this project can be seen as an extension of the problem of studying Taft algebras in the commutative setting. More generally, this article initiates the study of finite-dimensional quantum symmetry in the context of (non-connected) graded Calabi-Yau algebras of global dimension two.

Section \ref{sec.background} gives basic definitions of quivers and generalized Taft algebras, as well as a summary of results from \cite{KO}. 
The definition of the preprojective algebra $\Pi_Q$ on the extended Dynkin quiver $Q=\An$ is recalled, as are results on the graded automorphisms of $\Pi_Q$. Lemma \ref{lem.descend} gives conditions for a Taft action on $\kk\overline{Q}$ to descend to an action on $\Pi_Q$.

Section \ref{sec.actions} contains the classification of inner faithful actions on $\Pi_Q$ by generalized Taft algebras, assuming assuming $Q$ has at least three vertices. The analysis is split into two parts depending on whether the grouplike element $g$ acts as a rotation or a reflection. In two vertices, the quiver is not schurian and the analysis is different. This case is not considered here and will be reserved for future work.

Finally, Section \ref{sec.invariants} considers invariants of certain actions from the classification. In the case that there is a single $g$ orbit, it is shown that the invariant ring is precisely the center of $\Pi_Q$. This is related to work of Berrizbeitia on invariants of Tafts actions on quivers \cite{berr}.

\subsection*{Acknowledgements} Oswald was supported by a grant from the Simons Foundation Targeted Grant (917524) to the Pacific Institute for the Mathematical Sciences. 

\section{Background}\label{sec.background}

Let $\kk$ be a field containing a root of unity $\lambda$ such that the characteristic of $\kk$ and the order of $\lambda$ are coprime. These hypotheses are also assumed in \cite{KW}. The group of units of $\kk$ is denoted $\kk^\times$.

A quiver $Q$ is a directed graph with a vertex set denoted $Q_0$ and a set of arrows $Q_1$, equipped with maps $s,t:Q_1 \to Q_0$ where for all $a \in Q_1$, $s(a)$ denotes the \emph{source} of $a$ and $t(a)$ denotes its \emph{target}. The quiver $Q$ is \emph{finite} if $Q_0$ and $Q_1$ are finite as sets. The quiver $Q$ is \emph{schurian} if for every pair of vertices $i \neq j$, there is at most one arrow $a$ with source $s(a)=i$ and $t(a)=j$.

A \emph{path} is a sequence of arrows $p=a_1a_2\cdots a_n$ where $t(a_i)=s(a_{i+1})$ for all $i = 1,\hdots,n-1$. The maps $s,t$ extend to paths in a natural way where $s(p)=s(a_1)$ and $t(p)=t(a_n)$. At each vertex $i$ there is a \emph{trivial loop} $e_i$ which satisfies $s(e_i)=i=t(e_i)$. The \emph{path algebra} $\kk Q$ is spanned, as a vector space, by the paths on $Q$, and multiplication of paths $p$ and $q$ is the concatenation $pq$ if $t(p)=s(q)$, and zero otherwise. Let $\kk Q_k$ denote the span of paths of length $k$. There is a natural filtration $\cF=\{F_k\}$ on $\kk Q$ in which $F_k$ is spanned by the set of paths of length at most $k$.

A \emph{quiver automorphism} is a pair of bijections $(f_0,f_1)$ on the sets $Q_0$ and $Q_1$, respectively, such that for all $a \in Q_1$, $(f_0 \circ s)(a) = (s \circ f_1)(a)$ and $(f_0 \circ t)(a) = (t \circ f_1)(a)$. A group $G$ \emph{acts} on the quiver $Q$ if $G$ may be identified with a group of quiver automorphisms. 
The group action \emph{preserves the filtration by path length} if $g(F_i) \subseteq F_i$. If $Q$ is schurian, then this implies that the automorphism giving the action of $g$ is in fact graded, that is, $g(\kk Q_i) \subseteq \kk Q_i$ \cite[Lemmas 3.1, 3.2]{KW}.

\subsection{Hopf actions on path algebras}

Suppose $\lambda$ is a primitive $r\th$ root of unity,
$r>1$, and $m$ a positive multiple of $r$. The corresponding \emph{generalized Taft algebra} is the (finite-dimensional) $\kk$-algebra 
\[ \gtaft := \kk\langle g,x \mid gx-\lambda xg, g^m-1,x^r\rangle.\]
Then $\gtaft$ is a Hopf algebra where the coalgebra
structure is given by 
\begin{align*}
&\Delta(g)=g \tensor g, \quad \Delta(x) = 1 \tensor x + x \tensor g, \qquad
\varepsilon(g) = 1, \quad \varepsilon(x)=0.
\end{align*}
That is, $g$ is grouplike and $x$ is $(1,g)$-skew primitive. The antipode on $\gtaft$ is given by 
$S(g)=g\inv$ and $S(x)=-xg\inv$.
Throughout, $G$ denotes the (cyclic) group generated by $g$.

The following results (Proposition \ref{prop.vertact} and Theorem \ref{thm.quiveract}) are 
adapted from \cite{KO} (Proposition 3.1 and Corollary 3.9). They have been restated below
to avoid discussion of the quantum Borel.

\begin{proposition}\label{prop.vertact}
Let $\lambda$ be a primitive $r^\text{th}$ root of unity.
Let $Q_0$ be the vertex set of a quiver.
\begin{enumerate}[label=(\Alph*),leftmargin=*]
\item The following data determines a Hopf action of $\gtaft$ on $\kk Q_0$.
\begin{enumerate}[label=(\roman*)]
\item[(i)] A permutation action of $G$ on the set $Q_0$ such that $\#(G\cdot i)\mid m$ for all $i \in Q_0$.
\item[(ii)] A collection of scalars $(\gamma_i \in \kk)_{i \in Q_0}$ such that, for all $i \in Q_0$, $\gamma_{g \cdot i} = \lambda\inv \gamma_i$ and
either $\#(G\cdot i)=r$ or $\gamma_i=0$ for all $i$.
\end{enumerate}
Given this data, the $x$-action is given by
\[ x \cdot e_i = \gamma_i e_i - \gamma_i \lambda\inv e_{g \cdot i}\quad\text{for all $i \in Q_0$}.\]

\item Every action of $\gtaft$ on $\kk Q_0$ is of the form above.
\end{enumerate}
\end{proposition}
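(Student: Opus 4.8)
The plan is to handle the grouplike generator $g$ and the skew-primitive generator $x$ separately, identifying $\kk Q_0$ with the commutative semisimple algebra $\prod_{i\in Q_0}\kk e_i$, so that $e_ie_j=\delta_{ij}e_i$ and $1=\sum_{i\in Q_0}e_i$. The module-algebra axioms for a grouplike element say that $g$ acts by an algebra automorphism, and those for the $(1,g)$-skew primitive $x$ (with $\varepsilon(x)=0$) say that $x$ is a $(1,g)$-skew derivation, $x\cdot(ab)=a(x\cdot b)+(x\cdot a)(g\cdot b)$ and $x\cdot 1=0$. So for part (A) I would set $g\cdot e_i=e_{g\cdot i}$ and define $x$ by the stated formula, extend $x$ linearly, and then verify: that $g\cdot 1=1$ and $g$ is multiplicative (immediate, as $g$ permutes the $e_i$); that the skew-derivation identity $x\cdot(e_ie_j)=e_i(x\cdot e_j)+(x\cdot e_i)(g\cdot e_j)$ holds for all $i,j$ (so that $x$ really is a skew derivation), the cases $i=j$ and $i\ne j$ each reducing to the scalar relation $\gamma_{g\cdot i}=\lambda\inv\gamma_i$ together with the vanishing $\gamma_i=0$ at fixed points; and that $x\cdot 1=0$, which after reindexing once more uses $\gamma_{g\cdot i}=\lambda\inv\gamma_i$.

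It then remains to check that the defining relations of $\gtaft$ hold as operators. The relation $g^m=1$ holds because $\#(G\cdot i)\mid m$ makes $g^m$ fix every vertex, and $gx=\lambda xg$ is a one-line computation on each $e_i$ using $\gamma_{g\cdot i}=\lambda\inv\gamma_i$. The substantive relation, and the step I expect to be the main obstacle, is $x^r=0$. I would argue orbit by orbit: $x$ preserves the span $V$ of the idempotents in a given $G$-orbit, acting as $0$ when $\gamma=0$ there. On an orbit $i_0,\dots,i_{r-1}$ of size $r$ with $\gamma:=\gamma_{i_0}\ne 0$, write $D=x|_V$, let $P$ be the cyclic shift $e_{i_k}\mapsto e_{i_{k+1}}$, and let $\Gamma=\mathrm{diag}(\gamma_{i_0},\dots,\gamma_{i_{r-1}})$. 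The key observation is that $\gamma_{g\cdot i}=\lambda\inv\gamma_i$ is precisely the commutation relation $P\Gamma=\lambda\Gamma P$, which collapses the defining formula to $D=\Gamma(I-P)$. Telescoping via $(I-\lambda^{j}P)\Gamma=\Gamma(I-\lambda^{j+1}P)$ then yields
\[
D^{r}=\Gamma^{r}\prod_{j=0}^{r-1}\bigl(I-\lambda^{j}P\bigr).
\]
Because $\lambda$ is a primitive $r\th$ root of unity, $\lambda^{0},\dots,\lambda^{r-1}$ are all the $r\th$ roots of unity, so the product is a nonzero scalar multiple of $P^{r}-I$; as $P$ is an $r$-cycle we have $P^{r}=I$, and hence $D^{r}=0$.

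For part (B) I would run this analysis in reverse. Any action has $g$ acting as an algebra automorphism of order dividing $m$; since the primitive idempotents $e_i$ are intrinsic to $\kk Q_0$, $g$ permutes them, giving the permutation action of (i) with orbit sizes dividing $m$. Expanding $x\cdot e_i=\sum_j c_{ij}e_j$ and applying the skew-derivation identity to $e_i=e_i^2$ forces $x\cdot e_i$ to be supported on $\{i,g\cdot i\}$; setting $\gamma_i:=c_{ii}$, the relation $x\cdot 1=0$ gives the remaining coefficient as $-\gamma_{g\cdot i}$, and comparing coefficients in $gx=\lambda xg$ gives $\gamma_{g\cdot i}=\lambda\inv\gamma_i$, which together recover the stated formula for $x\cdot e_i$ and the first condition of (ii). For the second condition, iterating $\gamma_{g\cdot i}=\lambda\inv\gamma_i$ around an orbit of size $s$ shows that $\gamma_i\ne 0$ forces $\lambda^{s}=1$, hence $r\mid s$; and the computation above, now with $P$ an $s$-cycle and $\Gamma$ invertible, shows that $x^r=0$ forces $P^{r}=I$, hence $s\mid r$. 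Thus $\gamma_i\ne0$ implies $\#(G\cdot i)=r$. As in part (A), the crux in both directions is the factorization $D=\Gamma(I-P)$, which reduces the relation $x^r=0$ to the transparent statement $P^{r}=I$.
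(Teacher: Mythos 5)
Your argument is correct, but there is nothing in the paper to compare it against: Proposition \ref{prop.vertact} is stated here without proof, being imported verbatim (modulo notation) from Kinser--Oswald \cite[Proposition 3.1]{KO}, so your write-up is a genuine, self-contained reproof rather than a variant of an in-paper argument. The verification of the module-algebra axioms for $g$ and for the $(1,g)$-skew derivation $x$, the reduction of every identity to $\gamma_{g\cdot i}=\lambda\inv\gamma_i$ (with the automatic vanishing $\gamma_i=0$ at fixed vertices), and the reverse direction extracting the support of $x\cdot e_i$ from $e_i=e_i^2$ and the coefficients from $x\cdot 1=0$ and $gx=\lambda xg$ are all standard and sound. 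The genuinely nice contribution is your treatment of the one substantive relation $x^r=0$: the factorization $D=\Gamma(I-P)$ on the span of an orbit, with $P\Gamma=\lambda\Gamma P$ encoding $\gamma_{g\cdot i}=\lambda\inv\gamma_i$, gives
\[
D^{r}=\Gamma^{r}\prod_{j=0}^{r-1}\bigl(I-\lambda^{j}P\bigr)=\Gamma^{r}\,(I-P^{r}),
\]
since $\prod_{j=0}^{r-1}(z-\lambda^{j})=z^{r}-1$; this makes both directions transparent at once ($P^r=I$ on an $r$-orbit gives $D^r=0$, while invertibility of $\Gamma$ and $D^r=0$ force $s\mid r$, which combined with $\lambda^{s}=1$ pins down $s=r$). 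One cosmetic remark: condition (ii) as printed reads as a global dichotomy (``$\gamma_i=0$ for all $i$''), whereas your orbit-by-orbit argument establishes the slightly finer per-orbit statement ($\gamma_i\neq 0$ on an orbit forces that orbit to have size $r$); this is the version actually used later in the paper, so no harm is done, but you should state explicitly which reading you are proving.
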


\begin{definition}\label{defn.qtmap}
Let $Q$ be a quiver. A $\kk$-linear endomorphism 
$\sigma: \kk Q_0 \oplus \kk Q_1 \to \kk Q_0 \oplus \kk Q_1$
is a \emph{quiver-Taft map} for $Q$ if it satisfies
\begin{enumerate}[label=($\sigma$\arabic*)]
\item \label{sig1} $\sigma(\kk Q_0) = 0$,
\item \label{sig2} $\sigma(a) = e_{s(a)} \sigma(a) e_{g \cdot t(a)}$ for all $a \in Q_1$, and
\item \label{sig3} $\sigma(g \cdot a) = \lambda\inv g \cdot \sigma(a)$ for all $a \in Q_1$.
\end{enumerate}
\end{definition}

\begin{theorem}\label{thm.quiveract}
Let $\lambda$ be a primitive $r^\text{th}$ root of unity.
Let $Q$ be the vertex set of a quiver.
\begin{enumerate}[label=(\Alph*),leftmargin=*]
\item The following data determines a Hopf action of $\gtaft$ on $\kk Q$.
\begin{enumerate}[label=(\roman*)]
    \item A Hopf action of $\gtaft$ on $\kk Q_0$ as above.
    \item A representation of $G$ on $\kk Q_1$ satisfying $s(g \cdot a) = g\cdot s(a)$ and $t(g\cdot a) = g\cdot t(a)$ for all $a \in Q_1$,
    and the element $g^m$ acts as the identity on all of $\kk Q$.
    \item A quiver-Taft map $\sigma$ for $Q$ satisfying
    \begin{align}\label{eq.gam_sig}
        \gamma_{s(a)}^r g^r(a) - \gamma_{t(a)}^r a = \sigma^r(a)\quad\text{for all $a \in \kk Q_1$}.
    \end{align}
\end{enumerate}
Given this data, the $x$-action on $a \in Q_1$ is given by
\[ x \cdot a = \gamma_{t(a)} a - \gamma_{s(a)} \lambda\inv(g \cdot a) + \sigma(a).\]

\item Every action of $\gtaft$ on $\kk Q$ is of the form above.
\end{enumerate}
\end{theorem}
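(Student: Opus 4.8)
The plan is to exploit the fact that $\gtaft$ is generated as an algebra by $g$ and $x$, so that giving a $\gtaft$-module algebra (i.e.\ Hopf action) structure on $\kk Q$ amounts to specifying the operators $g\cdot(-)$ and $x\cdot(-)$ on the algebra generators $Q_0\sqcup Q_1$ of $\kk Q$ and then verifying two kinds of compatibility: that the operators respect the coproduct (so $g$ acts as an algebra automorphism and $x$ as a $(1,g)$-skew derivation, $x\cdot(pq)=p\,(x\cdot q)+(x\cdot p)\,(g\cdot q)$) and the defining relations $gx=\lambda xg$, $g^m=1$, $x^r=0$ of $\gtaft$; and that the operators are well defined on $\kk Q$, i.e.\ compatible with the path-algebra relations among the $e_i$ and the arrows. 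Since the vertex part of both directions is already handled by Proposition \ref{prop.vertact}, throughout I would reduce every statement about $\kk Q$ to the generating arrows and invoke Proposition \ref{prop.vertact} on $\kk Q_0$.

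For part (A), the $g$-action supplied by (ii) is a quiver automorphism precisely because the conditions $s(g\cdot a)=g\cdot s(a)$ and $t(g\cdot a)=g\cdot t(a)$ are what make the induced map on paths an algebra homomorphism, and $g^m=\id$ holds by hypothesis. I would then define $x$ on arrows by the displayed formula and extend it to $\kk Q$ by the twisted Leibniz rule; well-definedness is the verification that $x\cdot(e_{s(a)}ae_{t(a)})=x\cdot a$ and $x\cdot(e_ie_j)=\delta_{ij}\,x\cdot e_i$, which follows from \ref{sig1} and \ref{sig2} together with the vertex formula of Proposition \ref{prop.vertact} (the cross terms produced when $x\cdot e_i$ hits $g\cdot a$ cancel against the pieces $\gamma_{t(a)}a$ and $-\gamma_{s(a)}\lambda\inv(g\cdot a)$). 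The relation $gx=\lambda xg$ reduces on arrows to the $G$-equivariance of the linear part together with \ref{sig3}, and on vertices to Proposition \ref{prop.vertact}. The only substantial point is $x^r=0$: writing $x=D+\sigma$ on $\kk Q_1$ with $D(a)=\gamma_{t(a)}a-\gamma_{s(a)}\lambda\inv(g\cdot a)$, the relations $gx=\lambda xg$ and \ref{sig3} make $D$ and $\sigma$ interact $\lambda$-commutatively, so the mixed terms in $x^r=(D+\sigma)^r$ carry $\lambda$-binomial coefficients $\binom{r}{k}_\lambda$ with $0<k<r$, which vanish as $\lambda$ is a primitive $r\th$ root of unity. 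This collapses $x^r$ into a pure-$D$ term and a pure-$\sigma$ term; a second application of the same vanishing together with $\gamma_{g\cdot i}=\lambda\inv\gamma_i$ evaluates the pure-$D$ term as a combination of $\gamma_{s(a)}^r g^r(a)$ and $\gamma_{t(a)}^r a$, so that $x^r\cdot a$ becomes a combination of $\gamma_{s(a)}^r g^r(a)$, $\gamma_{t(a)}^r a$, and $\sigma^r(a)$ whose vanishing is precisely \eqref{eq.gam_sig}.

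For part (B), I would start from an arbitrary $\gtaft$-action. Since $g$ is grouplike it acts as an algebra automorphism of $\kk Q$; because the action preserves the filtration $\cF$ and $Q$ is schurian, this automorphism is graded \cite[Lemmas 3.1, 3.2]{KW}, hence restricts to a permutation of $Q_0$ and a representation on $\kk Q_1$ compatible with $s$ and $t$, with $g^m=\id$, which is exactly the data (i), (ii); Proposition \ref{prop.vertact} applied to $\kk Q_0$ extracts the scalars $\gamma_i$. Since $x$ is $(1,g)$-skew primitive it acts as a $(1,g)$-skew derivation, again matching Proposition \ref{prop.vertact} on $\kk Q_0$. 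For an arrow $a$ I would first show $x\cdot a\in F_1$ and then set $\sigma(a):=x\cdot a-\gamma_{t(a)}a+\gamma_{s(a)}\lambda\inv(g\cdot a)$; conditions \ref{sig1}--\ref{sig3} and \eqref{eq.gam_sig} for $\sigma$ then drop out by subtracting the identities already computed in part (A) ($G$-equivariance of $x$ yields \ref{sig3}, the idempotent sandwich yields \ref{sig1} and \ref{sig2}, and $x^r=0$ yields \eqref{eq.gam_sig}).

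I expect two steps to carry the real weight. The first is the $x^r=0$ computation in part (A): organizing $(D+\sigma)^r$ so the $\lambda$-binomial vanishing applies cleanly requires pinning down the exact $\lambda$-commutation between $D$ and $\sigma$ (and within $D$ itself), and carefully bookkeeping the powers of $g$ and the $\gamma$-scalars along a $g$-orbit of arrows. The second is establishing, in part (B), that $x\cdot a\in F_1$ for each arrow $a$: this degree control is what licenses the decomposition into a linear part plus $\sigma(a)$, and it is where schurianness and filtration-preservation of $\gtaft$-actions (as in Section \ref{sec.background} and \cite{KO}) are essential.
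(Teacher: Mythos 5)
First, a point of reference: the paper does not prove Theorem \ref{thm.quiveract} at all; it is imported verbatim (modulo notation) from \cite{KO} (Proposition 3.1 and Corollary 3.9), so there is no in-paper proof to compare against. Judged on its own merits, your overall architecture --- reduce to generators, check the coproduct compatibility and the three defining relations of $\gtaft$, and for the converse peel off the linear part of $x\cdot a$ to define $\sigma$ --- is the standard and correct strategy.

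There is, however, a genuine gap at the step you yourself identify as carrying the real weight. You claim that the relations $gx=\lambda xg$ and \ref{sig3} make $D$ and $\sigma$ ``interact $\lambda$-commutatively,'' so that $(D+\sigma)^r$ collapses by the $\lambda$-binomial theorem. This is false: writing $L(p)=\gamma_{t(p)}p$ and $R(p)=\gamma_{s(p)}\lambda\inv (g\cdot p)$, so that $D=L-R$, one checks from \ref{sig2}, \ref{sig3} and $\gamma_{g\cdot i}=\lambda\inv\gamma_i$ that $\sigma L=\lambda L\sigma$ but $R\sigma=\lambda \sigma R$ (equivalently $\sigma R=\lambda\inv R\sigma$). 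Hence $\sigma D$ is \emph{not} a scalar multiple of $D\sigma$ for any single scalar, and the two-operator $q$-binomial theorem does not apply to the pair $(D,\sigma)$; the hypothesis of the step you invoke simply fails. The conclusion $(D+\sigma)^r=D^r+\sigma^r$ happens to be true, but your derivation of it is invalid. The correct route is to decompose $x$ on $\kk Q_0\oplus\kk Q_1$ into \emph{three} operators, $x=L+\sigma+(-R)$, and observe that with the ordering $L,\sigma,-R$ each later operator moves past each earlier one at the cost of exactly one factor of $\lambda$ (namely $\sigma L=\lambda L\sigma$, $(-R)L=\lambda L(-R)$, $(-R)\sigma=\lambda\sigma(-R)$). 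The quantum multinomial theorem then kills every mixed term since the Gaussian multinomial $\binom{r}{i,j,k}_\lambda$ vanishes unless one of $i,j,k$ equals $r$, giving $x^r\cdot a=L^r(a)+(-R)^r(a)+\sigma^r(a)=\gamma_{t(a)}^r a-\gamma_{s(a)}^r g^r(a)+\sigma^r(a)$, whose vanishing is exactly \eqref{eq.gam_sig}. With that repair the rest of your outline (well-definedness on $\kk Q$, the converse via $\sigma(a):=x\cdot a-\gamma_{t(a)}a+\gamma_{s(a)}\lambda\inv(g\cdot a)$) goes through, though you should make explicit that part (B) relies on the standing hypothesis from \cite{KO} that the action preserves the path-length filtration --- this is an assumption on the class of actions considered, not something that follows for free.
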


In general, the action of a Hopf algebra $H$ on an algebra $A$ is \emph{inner faithful} if there is no Hopf ideal $I$ of $H$ such that $IA = 0$. When $H=\gtaft$, the action is inner faithful if and only if $x \cdot A \neq 0$ \cite[Lemma 2.5]{KW}.

The following is an immediate consequence of the above parametrization.

\begin{corollary}\label{cor.fixed}
Suppose $\gtaft$ acts on $\kk Q$ as in Theorem \ref{thm.quiveract}. 
\begin{enumerate}
    \item If $g\cdot e_i=e_i$, then $\gamma_i=0$ and $e_i$ is a fixed point.   
    \item Suppose that $Q$ is schurian and $g \cdot e_i=e_i$ for all $i$. Then $x \cdot a = 0$ for all $a \in Q_1$. That is, the action of $\gtaft$ on $\kk Q$ is not inner faithful.
\end{enumerate}
\end{corollary}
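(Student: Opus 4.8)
The plan is to prove both parts directly from the explicit parametrization in Proposition \ref{prop.vertact} and Theorem \ref{thm.quiveract}, treating (1) as a statement about the vertex action and (2) as its consequence for the arrow action under the schurian hypothesis.

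For part (1), suppose $g \cdot e_i = e_i$, so the orbit $G \cdot i$ is a singleton and $\#(G \cdot i) = 1$. I would invoke condition (ii) of Proposition \ref{prop.vertact}(A): for each $i$, either $\#(G \cdot i) = r$ or $\gamma_i = 0$. Since $r > 1$ by hypothesis on the generalized Taft algebra, we have $\#(G \cdot i) = 1 \neq r$, forcing $\gamma_i = 0$. Plugging $\gamma_i = 0$ and $g \cdot i = i$ into the displayed $x$-action $x \cdot e_i = \gamma_i e_i - \gamma_i \lambda\inv e_{g \cdot i}$ gives $x \cdot e_i = 0$, and since $g \cdot e_i = e_i$ is assumed, $e_i$ is fixed by the whole algebra. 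This part should be entirely routine.

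For part (2), assume $Q$ is schurian and $g \cdot e_i = e_i$ for all $i$; I want to show $x \cdot a = 0$ for every arrow $a \in Q_1$. By part (1), $\gamma_i = 0$ for all $i$, so in the $x$-action formula
\[ x \cdot a = \gamma_{t(a)} a - \gamma_{s(a)} \lambda\inv (g \cdot a) + \sigma(a) \]
the first two terms vanish, leaving $x \cdot a = \sigma(a)$. It therefore suffices to show the quiver-Taft map $\sigma$ is identically zero on $\kk Q_1$. Here is where I expect the main obstacle: I need to use the constraint \ref{sig2}, namely $\sigma(a) = e_{s(a)} \sigma(a) e_{g \cdot t(a)}$, combined with the fact that $g$ fixes every vertex. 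Since $g \cdot t(a) = t(a)$, the map $\sigma$ sends $a$ into the span of paths starting at $s(a)$ and ending at $t(a)$. Because $\sigma(\kk Q_0) = 0$ by \ref{sig1} and $\sigma$ preserves the relevant grading (it maps $\kk Q_1$ into $\kk Q_1$ via the arrow-space decomposition implicit in \ref{sig2}), $\sigma(a)$ must lie in the span of arrows from $s(a)$ to $t(a)$. The schurian hypothesis then guarantees there is \emph{at most one} such arrow, so $\sigma(a)$ is a scalar multiple of $a$ itself.

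The remaining work is to rule out a nonzero scalar. I would combine \ref{sig3}, which says $\sigma(g \cdot a) = \lambda\inv g \cdot \sigma(a)$, with the fact that $g$ fixes vertices and hence permutes arrows within each source-target pair; since the pair $(s(a), t(a))$ admits a unique arrow, $g$ must fix $a$ up to scalar, and compatibility of $\sigma$ with $g$ forces the scalar on $\sigma$ to satisfy a relation incompatible with $\lambda \neq 1$ unless it vanishes. Concretely, writing $\sigma(a) = c_a a$, equation \ref{sig3} together with $g \cdot a = a$ yields $c_a a = \lambda\inv c_a a$, so $(1 - \lambda\inv) c_a = 0$, and since $\lambda \neq 1$ we conclude $c_a = 0$. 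Thus $\sigma \equiv 0$, giving $x \cdot a = 0$ for all arrows; combined with part (1) this shows $x$ annihilates all of $\kk Q$, so the action is not inner faithful by \cite[Lemma 2.5]{KW}. The one subtlety to verify carefully is that $g$ genuinely fixes each arrow (not merely each source-target pair), which follows because a schurian quiver has a unique arrow per ordered pair of vertices and $g$ preserves source and target.
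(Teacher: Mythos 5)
Your proof is correct and follows essentially the same route as the paper: part (1) from the constraints in Proposition \ref{prop.vertact} (the paper instead uses $\gamma_{g\cdot i}=\lambda\inv\gamma_i$ with $g\cdot i=i$ to force $\gamma_i=0$, but your orbit-size argument via $\#(G\cdot i)=1\neq r$ is equally valid), and part (2) by using \ref{sig1}, \ref{sig2} and the schurian hypothesis to write $\sigma(a)=c_a a$, then killing $c_a$ with \ref{sig3}. One correction: the ``subtlety'' you flag at the end --- that $g$ genuinely fixes each arrow --- is false as stated, since the $g$-action on $\kk Q_1$ is only a linear representation, so uniqueness of the arrow from $s(a)$ to $t(a)$ together with preservation of source and target gives $g\cdot a=\mu a$ for some $\mu\in\kk^\times$, not $g\cdot a=a$ (this is exactly how the paper writes it, with $g\cdot a_{ij}=\mu_{ij}a_{ij}$). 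Fortunately this does not damage the argument: \ref{sig3} then reads $\mu c_a a=\lambda\inv\mu c_a a$, the scalar $\mu$ cancels, and $(1-\lambda\inv)c_a=0$ still forces $c_a=0$.
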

\begin{proof}
(1) If $g\cdot e_i=e_i$, then $\gamma_{g\cdot i}=\gamma_i=\lambda\inv \gamma_i$, so $\gamma_i=0$. It follows that $x\cdot e_i = 0$.

(2) By part (1), $\gamma_i=0$ for all $i$, so $x \cdot e_i=0$ for all $i$. Since the quiver $Q$ is schurian, there exists at most one unique arrow $a_{ij}$ with $s(a_{ij})=i$ and $t(a_{ij})=j$. Then $g \cdot a_{ij}=\mu_{ij} a_{ij}$ for some $\mu_{ij} \in \kk^\times$. Now 
\[ \sigma(a_{ij}) = e_{s(a_{ij})} \sigma(a_{ij}) e_{t(a_{ij})} = \eta_{ij} a_{ij}\]
for some $\eta_{ij} \in \kk$. Then
\[
\mu_{ij} \eta_{ij} a_{ij}
= \sigma(\mu_{ij} a_{ij})
= \sigma(g \cdot a_{ij}) = \lambda\inv g \sigma(a_{ij}) = \lambda\inv g \cdot (\eta_{ij} a_{ij}) = \lambda\inv \eta_{ij} \mu_{ij} a_{ij}.
\]
Thus, $\eta_{ij}=0$ so $\sigma(a_{ij})=0$. Thus, $x \cdot a_{ij} = 0$.
\end{proof}

\subsection{The preprojective algebra and its automorphisms}

Let $Q$ be a finite quiver. Define $Q_1^*$ by the rule: if $a \in Q_1$, then $a^* \in Q_1^*$ with $s(a^*)=t(a)$ and $t(a^*)=s(a)$.
The \emph{double} of a quiver $Q$, denoted $\overline{Q}$, is then defined by setting $\overline{Q}_0=Q_0$ and $\overline{Q}_1=Q_1 \cup Q_1^*$ 
 
\begin{definition}
The \emph{preprojective algebra} associated to a finite quiver $Q$ is the quotient $\kk \overline{Q}/(\Omega)$ where $\Omega=\sum_{a\in Q_1} aa^*-a^*a \in \overline{Q}$ is the \emph{preprojective relation}.
\end{definition}

For the remainder, let $Q=\An$ with $n \geq 1$. That is, $Q$ is an extended Dynkin quiver of type $A$. Throughout, all indices will be interpreted mod $n$. These quivers are given by
\begin{center}
\begin{tikzcd}
& & e_0 \arrow[lld, "a_0"'] & & \\
e_1 \arrow[r, "a_1"'] & e_2 \arrow[r, "a_2"'] & \cdots \arrow[r, "a_{n-3}"'] & e_{n-2} \arrow[r, "a_{n-2}"'] & e_{n-1} \arrow[llu, "a_{n-1}"']
\end{tikzcd}
\end{center}
The vertex set $\overline{Q_0}$ of $\overline{Q}$ is $\{e_0,\dots,e_{n-1}\}$, and there is exactly one nonstar arrow, $a_i$, from $e_i$ to $e_{i+1}$ and one star arrow, $a_i^*$, from $e_{i+1}$ to $e_{i}$ for each $i=0,\dots,n-1$. 
Specializing the defining relation to each vertex gives the relations
\begin{align}\label{eq.preproj}
\Omega_i = a_i^*a_i - a_{i+1}a_{i+1}^* \qquad\text{for each $i$}.
\end{align}
When $n=1$, then $\Pi_Q \iso \kk[x,y]$. For $n \geq 3$, $Q$ is \emph{schurian}.

It is well-known that $\Pi_Q$ is a locally finite noetherian PI algebra. Moreover, $\Pi_Q$ is Calabi--Yau of global dimension and Gelfand-Kirillov dimension two. The invariant theory of $\Pi_Q$ was studied by Weispfenning \cite{SW1,SW2}. The work in \cite{BKG} considers Auslander's Theorem for for dihedral actions on $\Pi_Q$. The goal herein is to classify actions of Taft algebras on $\Pi_Q$ and their invariants.

The graded automorphism group of $\Pi_Q$ was given in \cite{SW1}. In particular, \cite[Proposition 3.6]{SW1} gives that $\Aut_{\gr}(\Pi_Q) \iso D_n \ltimes N$ where $N$ is the subgroup of graded automorphisms which fix each vertex. It will be useful to recount the key part of this argument below.

\begin{lemma}\label{lem.gact}
Suppose $n \geq 3$ and let $g$ be a graded automorphism of $\kk\overline{Q}$.
The action of $g$ on $\kk\overline{Q}$ is described by one of the following cases where $d$ is an integer satisfying $0 \leq d \leq n-1$:
\begin{enumerate}
\item ($g$ acts as a rotation on $\overline{Q}$) 
There exists $\mu_i,\mu_i^* \in \kk^\times$ such that for all $0\leq i \leq n-1$,
\[  g\cdot e_{i}=e_{i+d}, \qquad 
    g\cdot a_i=\mu_ia_{i+d}, \qquad
    g\cdot a_i^*=\mu_i^* a_{i+d}^*. \]
\item ($g$ acts as a reflection on $\overline{Q}$)
There exists $\mu_i,\mu_i^* \in \kk^\times$ such that for all $0\leq i \leq n-1$,
\[  g\cdot e_i = e_{n-(d+i)}, \qquad 
    g\cdot a_i = \mu_i a_{n-(d+i+1)}^*, \qquad 
    g\cdot a_i^* = \mu_i^* a_{n-(d+i+1)}.
\]
\end{enumerate}
\end{lemma}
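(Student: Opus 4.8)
The plan is to exploit the grading together with the schurian hypothesis, pinning down $g$ one homogeneous degree at a time. In degree zero, $\kk\overline{Q}_0 = \bigoplus_i \kk e_i$ is a commutative semisimple algebra whose primitive idempotents are exactly the trivial paths $e_i$. Since $g$ is graded it restricts to an algebra automorphism of $\kk\overline{Q}_0$, and any automorphism of such an algebra must permute its primitive idempotents; thus $g\cdot e_i = e_{\pi(i)}$ for a unique permutation $\pi$ of $\ZZ/n\ZZ$. This already fixes the vertex-level behaviour in both cases, so the remaining work is to determine $\pi$ and then the scalars on the arrows.

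Next I would analyze degree one. For any arrow $a$ with $s(a)=i$ and $t(a)=j$ we have $a = e_i a e_j$, so $g\cdot a = e_{\pi(i)}(g\cdot a)e_{\pi(j)}$ lies in the span of arrows from $\pi(i)$ to $\pi(j)$. Because $n\geq 3$ the quiver $\overline{Q}$ is schurian, so this span is at most one-dimensional; as $g$ is invertible, $g\cdot a$ must be a nonzero scalar multiple of the unique arrow from $\pi(i)$ to $\pi(j)$, which in particular must exist. Applying this to the arrows $a_i \colon e_i \to e_{i+1}$ shows that $\pi(i)$ and $\pi(i+1)$ are adjacent in $\overline{Q}$ for every $i$, so $\pi$ preserves adjacency in the underlying undirected cycle $C_n$. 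Hence $\pi \in \Aut(C_n) = D_n$, meaning $\pi$ is either a rotation $\pi(i)=i+d$ or a reflection $\pi(i) = -(d+i)$ modulo $n$, for some $0\leq d \leq n-1$.

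Finally I would split into these two cases and read off the target arrows from the source/target bookkeeping established above. In the rotation case, $g\cdot a_i$ runs from $\pi(i)=i+d$ to $\pi(i+1)=i+1+d$, and the only such arrow is the nonstar arrow $a_{i+d}$; likewise $g\cdot a_i^*$ runs from $i+1+d$ to $i+d$ and must be a multiple of $a_{i+d}^*$. Writing the scalars as $\mu_i,\mu_i^*\in\kk^\times$ yields case (1). In the reflection case the cyclic order is reversed: $g\cdot a_i$ runs from $\pi(i)=-(d+i)$ to $\pi(i+1)=-(d+i)-1$, so it decreases the index by one and must therefore be a star arrow, namely $a_{n-(d+i+1)}^*$; dually $g\cdot a_i^*$ increases the index by one and must be a multiple of the nonstar arrow $a_{n-(d+i+1)}$. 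This gives case (2).

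I expect the main obstacle to be the index arithmetic in the reflection case, namely correctly identifying which star or nonstar arrow joins $\pi(i)$ and $\pi(i+1)$ once the orientation has been reversed, and confirming that the normalization $n-(d+i+1)$ matches the stated formulas modulo $n$. The conceptual content (the degree-zero permutation, the schurian one-dimensionality forcing arrows to scalar multiples of arrows, and $\Aut(C_n)=D_n$) is routine; essentially all of the genuine care lives in this bookkeeping.
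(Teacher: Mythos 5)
Your proof is correct and takes essentially the same route as the paper: both arguments rest on the vertex permutation induced in degree zero together with the schurian property forcing each arrow to a scalar multiple of the unique arrow between the image vertices, and on the cycle structure forcing that permutation to be a rotation or reflection. The only difference is organizational — you pin down $\pi\in D_n$ first and then read off the arrows, whereas the paper starts from the image of a single arrow and propagates around the cycle — and your index bookkeeping in the reflection case matches the stated formulas.
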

\begin{proof}
(1) If $g\cdot a_i = \mu_i a_j$ where $\mu_i\in\kk^\times$, then it must be the case $g\cdot e_i=e_j$ and $g\cdot e_{i+1}=e_{j+1}$. Therefore, $g\cdot a_i^*=\mu_i^*a_j^*$. Since $s(g\cdot a_{i+1})=j+1$, then $g\cdot a_{i+1}=\mu_{i+1}a_{j+1}$ and $g\cdot a_{j+1}^*=\mu_{i+1}^*a_{j+1}^*$ and so on for $a_{i+k}$ and $a_{i+k}^*$ for any $1 \leq k \leq n-1$. Thus, if $g\cdot e_{0}=e_{d}$ and $g\cdot a_0=\mu_ia_{d}$ for some $0\leq d \leq n-1$, then $g\cdot e_{i}=e_{i+d}$, $g\cdot a_i=\mu_ia_{i+d}$, and $g\cdot a_i^*=\mu_i^* a_{i+d}^*$ for all $0\leq i \leq n-1$.

(2) If $g\cdot a_i=\mu_ia_j^*$ where $\mu_i\in\kk^\times$, then it must be the case that $g\cdot e_i=e_{j+1}$ and $g\cdot e_{i+1}=e_{j}$. Therefore, $g\cdot a_i^*=\mu_i^*a_j$. Then, $s(g\cdot a_{i+1})=j$, and so $g\cdot a_{i+1}=\mu_{i+1} a_{j-1}^*$ since this is the only arrow left with source $j$. Similarly, $g\cdot a_{i+1}^*=\mu_{i+1}^* a_{j-1}$. In general, if $g\cdot e_0 = e_{n-d}$ and $g\cdot a_0=\mu_0a_{n-(d+1)}^*$, then $g\cdot e_i = e_{n-(d+i)}$, $g\cdot a_i = \mu_i a_{n-(d+i+1)}^*$ and $g\cdot a_i^* = \mu_i^* a_{n-(d+i+1)}$ for all $0\leq i \leq n-1$.
\end{proof}

Suppose $\gtaft$ acts inner faithfully on $\kk\overline{Q}$ such that 
$g$ acts by rotation. By Lemma \ref{lem.gact}(1) and Corollary \ref{cor.fixed}, $d \neq 0$.
The final result of this section gives conditions for an action on $\kk\overline{Q}$ to pass to the quotient $\Pi_Q$.

\begin{lemma}\label{lem.descend}
Suppose $n \geq 3$. 
An inner faithful action of $\gtaft$ on $\kk\overline{Q}$
descends to an action on $\Pi_Q$ if and only if
\begin{align}
\label{eq.mu_rel}
    0 &= \mu_i\mu_i^*-\mu_{i+1}\mu_{i+1}^*, \\
\label{eq.sigrel}
    0& = a_i^*\sigma(a_i) + \sigma(a_i^*)(g \cdot a_i) - a_{i+1}\sigma(a_{i+1}^*) - \sigma(a_{i+1})(g \cdot a_{i+1}^*).
\end{align}
\end{lemma}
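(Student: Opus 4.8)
The plan is to recognize that an action of $\gtaft$ descends to $\Pi_Q = \kk\overline{Q}/(\Omega)$ exactly when the relation ideal $(\Omega)$ is stable under the Hopf action, and then to reduce this stability to concrete conditions on the generators $\Omega_i$. Since $\gtaft$ is generated as an algebra by $g$ and $x$, the ideal $(\Omega)$ is $\gtaft$-stable if and only if $g\cdot(\Omega)\subseteq(\Omega)$ and $x\cdot(\Omega)\subseteq(\Omega)$. Because $g$ acts as an algebra automorphism and $(\Omega)$ is a two-sided ideal, the first containment is equivalent to $g\cdot\Omega_i\in(\Omega)$ for every $i$. For $x$, I would expand $x\cdot(p\,\Omega_i\,q)$ using $\Delta^{(2)}(x)=1\tensor 1\tensor x+1\tensor x\tensor g+x\tensor g\tensor g$; the resulting summands $p\,\Omega_i\,(x\cdot q)$ and $(x\cdot p)(g\cdot\Omega_i)(g\cdot q)$ already lie in $(\Omega)$ once $g\cdot\Omega_i\in(\Omega)$ is known, so $x$-stability reduces to $x\cdot\Omega_i\in(\Omega)$ for every $i$. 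Thus the whole problem collapses to analyzing $g\cdot\Omega_i$ and $x\cdot\Omega_i$ in the inner faithful rotation case, where $d\neq 0$ by the remark preceding the lemma.

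First I would treat $g$. Using Lemma \ref{lem.gact}(1) and multiplicativity of $g$, a direct computation gives $g\cdot\Omega_i=\mu_i\mu_i^*\,a_{i+d}^*a_{i+d}-\mu_{i+1}\mu_{i+1}^*\,a_{i+1+d}a_{i+1+d}^*$, a length-two loop at $e_{i+1+d}$. Comparing with $\Omega_{i+d}=a_{i+d}^*a_{i+d}-a_{i+1+d}a_{i+1+d}^*$ shows $g\cdot\Omega_i=\mu_i\mu_i^*\,\Omega_{i+d}+(\mu_i\mu_i^*-\mu_{i+1}\mu_{i+1}^*)\,a_{i+1+d}a_{i+1+d}^*$. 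The key observation is that in degree two the ideal $(\Omega)$ is spanned by the loops $\Omega_j$ alone, so among loops at $e_{i+1+d}$ it meets only the one-dimensional span of $\Omega_{i+d}$, whereas $a_{i+1+d}a_{i+1+d}^*$ is an independent basis loop. Hence $g\cdot\Omega_i\in(\Omega)$ if and only if $\mu_i\mu_i^*-\mu_{i+1}\mu_{i+1}^*=0$, which is precisely \eqref{eq.mu_rel}.

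Next I would compute $x\cdot\Omega_i$ via the skew-derivation rule $x\cdot(uv)=u(x\cdot v)+(x\cdot u)(g\cdot v)$ together with $x\cdot a=\gamma_{t(a)}a-\gamma_{s(a)}\lambda\inv(g\cdot a)+\sigma(a)$ from Theorem \ref{thm.quiveract}, splitting each $x\cdot a$ into a ``$\gamma$-part'' and a ``$\sigma$-part''. Since $d\neq 0$, all cross terms such as $a_i^*(g\cdot a_i)$ and $a_{i+1}(g\cdot a_{i+1}^*)$ vanish for vertex-mismatch reasons. Collecting what survives, I expect the $\gamma$-part to assemble exactly into $\gamma_{i+1}\Omega_i-\gamma_{i+1}\lambda\inv(g\cdot\Omega_i)$, which lies in $(\Omega)$ once \eqref{eq.mu_rel} holds, while the $\sigma$-part is precisely $a_i^*\sigma(a_i)+\sigma(a_i^*)(g\cdot a_i)-a_{i+1}\sigma(a_{i+1}^*)-\sigma(a_{i+1})(g\cdot a_{i+1}^*)$, i.e.\ the right-hand side of \eqref{eq.sigrel}. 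Property \ref{sig2} shows this $\sigma$-part is a degree-two element running from $e_{i+1}$ to $e_{i+1+d}$; since $d\neq 0$ this is an off-diagonal slot in which $(\Omega)$ has no nonzero degree-two elements (the only degree-two relations are the diagonal loops $\Omega_j$). Therefore $x\cdot\Omega_i\in(\Omega)$ forces the $\sigma$-part to vanish, which is exactly \eqref{eq.sigrel}. Combining the two analyses yields the claimed equivalence.

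The main obstacle I anticipate is bookkeeping rather than conceptual: one must carefully track sources and targets so as to discard the many cross terms that vanish when $d\neq 0$, and must justify the linear-algebra claim that in degree two the ideal $(\Omega)$ is spanned by the loops $\Omega_j$ alone, so that membership of an off-diagonal element forces it to be zero. A secondary point worth stating cleanly is the reduction of $\gtaft$-stability of $(\Omega)$ to the two generator conditions, which relies on $x$ being a skew derivation and on $g\cdot\Omega_i\in(\Omega)$ being established first; the same cross-term vanishing drives the reflection case of Lemma \ref{lem.gact}(2) analogously.
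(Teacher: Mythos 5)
Your overall strategy is the same as the paper's: reduce $\gtaft$-stability of $(\Omega)$ to the generator conditions $g\cdot\Omega_i\in(\Omega)$ and $x\cdot\Omega_i\in(\Omega)$, read off \eqref{eq.mu_rel} from $g\cdot\Omega_i$, and expand $x\cdot\Omega_i$ by the skew-derivation rule into a $\gamma$-part (which is $\gamma_{i+1}(\Omega_i-\lambda\inv g\cdot\Omega_i)$, hence already in the ideal), cross terms, and a $\sigma$-part identified with \eqref{eq.sigrel}. For the rotation case your argument is correct and matches the paper, and your explicit justification that a degree-two element of $(\Omega)$ must lie in $\Span\{\Omega_j\}$ is a welcome point the paper leaves implicit.

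There is, however, a genuine gap in how you dispose of the reflection case. You claim that ``since $d\neq 0$, all cross terms such as $a_i^*(g\cdot a_i)$ and $a_{i+1}(g\cdot a_{i+1}^*)$ vanish for vertex-mismatch reasons,'' and at the end assert that ``the same cross-term vanishing drives the reflection case analogously.'' This is false as stated: when $g$ is a reflection, $g\cdot a_i=\mu_i a_{n-(d+i+1)}^*$, so $a_i^*(g\cdot a_i)=\mu_i a_i^*a_{n-(d+i+1)}^*$ is a \emph{nonzero} path precisely when $n-(d+i+1)=i-1$, i.e.\ when $i$ lies on the axis of reflection. The term $\gamma_i a_i^*(g\cdot a_i)$ is then killed not by vertex mismatch but by Corollary \ref{cor.fixed}(1), which forces $\gamma_i=0$ at a $g$-fixed vertex; the paper makes exactly this argument. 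Likewise, your claim that the $\sigma$-part occupies an ``off-diagonal slot'' from $e_{i+1}$ to $e_{i+1+d}$ uses the rotation form of the $g$-action; in the reflection case the slot is from $e_{i+1}$ to $e_{g\cdot(i+1)}$, which is diagonal when $i+1$ is a fixed vertex, so the linear-algebra step forcing the $\sigma$-part to vanish (rather than merely to lie in $\Span\{\Omega_i\}$) needs a separate justification there. Since the lemma covers all inner faithful actions, including reflections (Lemma \ref{lem.gact}(2)), these cases must be handled and cannot be dismissed as analogous.
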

\begin{proof}
Each $\Omega_i$ (see \eqref{eq.preproj}) is homogeneous with $s(\Omega_i)=t(\Omega_i)=i+1$.
Since $g$ acts as an automorphism of $\kk\overline{Q}$,
then $g(\Omega_i)$ is homogeneous with $s(g(\Omega_i))=t(g(\Omega_i))=g \cdot (i+1)$.
By the discussion above, the image of $a_i^*a_i$ under $g$ consists of one starred arrow and one unstarred arrow, 
and the image has source and target $e_{g \cdot (i+1)}$.
In $\Pi_Q$ there is a unique such path $p$,
so $g(a_i^*a_i) = z p$ for some $z \in \kk$.
The same argument applies to the image of $a_{i+1}a_{i+1}^*$, so $g(a_{i+1}a_{i+1}^*) = z p$. 
It follows from Lemma \ref{lem.gact} that
\[
0 = g \cdot (a_i^*a_i - a_{i+1}a_{i+1}^*)
    = \mu_i\mu_i^* z p -\mu_{i+1}\mu_{i+1}^* z p
    = z (\mu_i^*\mu_i-\mu_{i+1}\mu_{i+1}^*)p.
\]
This proves \eqref{eq.mu_rel}. 
From Theorem \ref{thm.quiveract},
\begin{align*}
x &\cdot (a_i^*a_i - a_{i+1}a_{i+1}^*) \\
    &= \left(a_i^* (\gamma_{i+1} a_i - \gamma_i \lambda\inv (g \cdot a_i) + \sigma(a_i))
        + (\gamma_i a_i^* - \gamma_{i+1} \lambda\inv (g \cdot a_i^*)+\sigma(a_i^*))(g \cdot a_i) \right) \\
    &\qquad- \left( a_{i+1}(\gamma_{i+1} a_{i+1}^* - \gamma_{i+2} \lambda\inv (g \cdot a_{i+1}^*) + \sigma(a_{i+1}^*))
        + (\gamma_{i+2} a_{i+1} - \gamma_{i+1} \lambda\inv (g \cdot a_{i+1})+\sigma(a_{i+1}))(g \cdot a_{i+1}^*)\right) \\
    &= \gamma_{i+1}\left( (a_i^*a_i - a_{i+1}a_{i+1}^*)
        - \lambda\inv g \cdot (a_i^*a_i - a_{i+1}a_{i+1}^*)\right)
    + (1-\lambda\inv)\left(\gamma_ia_i^*(g \cdot a_i) - \gamma_{i+2}a_{i+1}(g \cdot a_{i+1}^*)\right)  \\
    &\qquad+ \left(a_i^*\sigma(a_i) + \sigma(a_i^*)(g \cdot a_i) - a_{i+1}\sigma(a_{i+1}^*) - \sigma(a_{i+1})(g \cdot a_{i+1}^*) \right) \\
    &= (1-\lambda\inv)\left(\gamma_ia_i^*(g \cdot a_i) - \gamma_{i+2}a_{i+1}(g \cdot a_{i+1}^*)\right) \\
        &\qquad + \left(a_i^*\sigma(a_i) + \sigma(a_i^*)(g \cdot a_i) - a_{i+1}\sigma(a_{i+1}^*) - \sigma(a_{i+1})(g \cdot a_{i+1}^*) \right).
\end{align*}
It now suffices to show that
\[ \gamma_ia_i^*(g \cdot a_i) - \gamma_{i+2}a_{i+1}(g \cdot a_{i+1}^*) = 0.\]

Suppose $g$ is a rotation. Then $g \cdot a_i = \mu_i a_{i+d}$. Hence, $a_i^*(g \cdot a_i) \neq 0$ if and only if $d=0$, in which case the action is not inner faithful. Similarly, $g \cdot a_{i+1}^* = \mu_{i+1}^* a_{i+1+d}^*$, so again $a_{i+1}(g \cdot a_{i+1}^*) \neq 0$ if and only if $d=0$.

Now suppose $g$ is a reflection. 
Since $g \cdot a_i = \mu_i a_{n-(d+i+1)}^*$, 
then $a_i^*(g \cdot a_i) \neq 0$ precisely when $n-(d+i+1)=i-1$, 
so $i$ is a fixed point and $\gamma_i=0$.
Similarly, $a_{i+1}(g \cdot a_{i+1}^*) \neq 0$ if and only if $i+2$ is a fixed point and $\gamma_{i+2}=0$. 
This leaves \eqref{eq.sigrel}, which is a relation from $i+1$ to $g \cdot (i+1)$.
\end{proof}

\section{Taft actions on preprojective algebras}
\label{sec.actions}

Throughout this section, assume that $Q=\An$ with $n \geq 3$,
$m$ and $r$ are positive integers so that $r\mid m$, and $\lambda$ is a primitive $r\th$ root of unity. Let $g$ and $x$ be the grouplike and $(1,g)$-skew primitive generators of $\gtaft$, respectively. 
All actions will be assumed to be inner faithful and linear.
Hence, by Theorem \ref{thm.quiveract}, $g$ acts as an automorphism on $\overline{Q}_1$. In light of Lemma \ref{lem.gact}, the analysis will be split between $g$ a rotation automorphism (Section \ref{sec.rotation}) and $g$ a reflection automorphism (Section \ref{sec.reflection}).
Given an action of $g$ on $\kk Q_0$, the action on $\kk\overline{Q}$ (and then on $\Pi_Q$) is parameterized by 
\begin{itemize}
\item the choice of integers $r,m$ defining $T_\lambda(r,m)$, 
\item the choice of scalars $(\gamma_i \in \kk)_{i \in Q_0}$ defined in Proposition \ref{prop.vertact}, 
\item the choice of scalars $(\mu_i,\mu_i^* \in \kk^\times)_{i \in Q_0}$ such that $g \cdot a_i = \mu_ia_{g \cdot i}$ and $g \cdot a_i^* = \mu_i^*a_{g \cdot i}^*$, and
\item the choice of a quiver-Taft map $\sigma$.
\end{itemize}
Given this information, the action extends to $\kk\overline{Q}$ and $\Pi_Q$ by
\begin{align*}
g \cdot e_i &= e_{g \cdot i} &
x \cdot e_i &= \gamma_i e_i - \gamma_i \lambda\inv e_{g \cdot i} \\
g \cdot a_i &= \mu_i a_{g \cdot i} &
x \cdot a_i &= \gamma_{i+1} a_i - \gamma_{i} \lambda\inv g\cdot a_{i} + \sigma(a_i) \\
g \cdot a_i^* &= \mu_i^* a_{g \cdot i}^* &
x \cdot a_i^* &= \gamma_i a_i^* - \gamma_{i+1} \lambda\inv g\cdot a_{i}^* + \sigma(a_i^*).
\end{align*}
The results below give necessary conditions in each case (rotation or reflection) so that there is an action of $\gtaft$ on $\Pi_Q$.
That these conditions are sufficient for an action will be clear.

\subsection{Rotation actions on the preprojective algebra}
\label{sec.rotation}

Throughout this section, assume that $\gtaft$ acts linearly and inner faithfully on $\kk\overline{Q}$ and the corresponding preprojective algebra $\Pi_Q$, and that there exists some $d \in \ZZ$, $0< d \leq n-1$, such that $g\cdot e_i=e_{i+d}$ for all $i \in Q_0$.
By Proposition \ref{prop.vertact}, $\gamma_{i+d} = \lambda\inv \gamma_i$, so
\begin{align}\label{eq.gam_rel}
\gamma_{i+kd}= \lambda^{-k}\gamma_i \qquad\text{for $0\leq i < d$ and $0\leq k <r$.}
\end{align}
Let $\kappa$ and $\tau$ denote the size and number of $G$ orbits of vertices, respectively. It follows that for all $i\in Q_0$,
\begin{equation*}
    \kappa\defeq\#(G\cdot i)=\frac{n}{\gcd{(n,d)}},\quad \tau\defeq \frac{n}{\kappa}=\gcd{(n,d)}.
\end{equation*}
Since $g^m\cdot a = a$ for all $a\in Q_1$, then for $0\leq i < \tau$,
\begin{equation}\label{eq.muprod-1root}
\mu_i\mu_{i+d}\dotsc\mu_{i+(\kappa-1)d}=\zeta_i\quad \text{and} \quad \mu_i^*\mu_{i+d}^*\dotsc\mu_{i+(\kappa-1)d}^*=\zeta_i^*,
\end{equation}
where $\zeta_i$ and $\zeta_i^*$ are $\left(\frac{m}{\kappa}\right)^\text{th}$ roots of unity.

The next lemma establishes initial properties of the quiver-Taft map $\sigma$
and these properties, as with many of the arguments below, will vary
depending on the choice of $d$.

\begin{lemma}\label{lem.rot_sigma} 
Suppose $\gtaft$ acts linearly and inner faithfully on $\kk\overline{Q}$ so that $g$ acts on $\kk\overline{Q}$ via a rotation automorphism with $g\cdot e_i=e_{i+d}$ for some $0 < d \leq n-1$. 
If $\sigma: \kk Q_0 \oplus \kk Q_1 \to \kk Q_0 \oplus \kk Q_1$ is the quiver-Taft map corresponding to this Taft action, then there exist $c_0,c_1,c_0^*,c_1^*\in\kk$ so that
\begin{align}\label{eq.rot_sigma1}
\sigma(a_i) &= \begin{cases}
    \lambda^{i/2} \left( \frac{\mu_i\mu_{i-2} \cdots \mu_2}{\mu_{i-1}^*\mu_{i-3}^*\cdots \mu_1^*} \right) c_0 a_{i-1}^* & \text{if $d=n-2$ and $i$ is even} \\
    \lambda^{(i-1)/2} \left( \frac{\mu_i\mu_{i-2} \cdots \mu_3}{\mu_{i-1}^*\mu_{i-3}^*\cdots \mu_2^*} \right) c_1 a_{i-1}^* & \text{if $d=n-2$ and $i$ is odd} \\
    \lambda^i(\mu_i\mu_{i-1}\cdots\mu_1)c_0 e_i & \text{if $d=n-1$} \\
    0       & \text{if $d \neq n-1,n-2$},   
\end{cases}\\[.5em]
\label{eq.rot_sigma2}
\sigma(a_i^*) &= \begin{cases}
     \lambda^{-i/2} \left( \frac{\mu_{i-1}\mu_{i-3} \cdots \mu_1}{\mu_{i-2}^*\mu_{i-4}^*\cdots \mu_0^*} \right) c_0^* a_{i+1} & \text{if $d=2$ and $i$ is even} \\
    \lambda^{-(i-1)/2} \left( \frac{\mu_{i-1}\mu_{i-3} \cdots \mu_2}{\mu_{i-2}^*\mu_{i-4}^*\cdots \mu_1^*} \right) c_1^* a_{i+1} & \text{if $d=2$ and $i$ is odd} \\
    (\lambda^i\mu_{i-1}^*\mu_{i-2}^*\cdots\mu_0^*)\inv c_0^* e_{i+1} & \text{if $d=1$} \\
    0       & \text{if $d \neq 1,2$}.
\end{cases} 
\end{align}
\end{lemma}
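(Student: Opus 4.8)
The plan is to determine $\sigma$ from the defining properties of a quiver-Taft map together with the inner faithfulness constraint and equation \eqref{eq.gam_sig}. The starting point is condition \ref{sig2}: for each arrow $b \in \overline{Q}_1$, we have $\sigma(b) = e_{s(b)}\,\sigma(b)\,e_{g\cdot t(b)}$, so $\sigma(b)$ is supported on paths from $s(b)$ to $g\cdot t(b)$. For $b = a_i$ this means $\sigma(a_i)$ runs from $e_i$ to $e_{g\cdot(i+1)} = e_{i+1+d}$, and for $b = a_i^*$ it runs from $e_{i+1}$ to $e_{g\cdot i} = e_{i+d}$. I would first catalog which single arrows realize these source/target pairs in $\overline{Q}$. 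An unstarred arrow $a_j$ goes $e_j \to e_{j+1}$ and a starred arrow $a_j^*$ goes $e_{j+1} \to e_j$; since $\sigma$ maps into $\kk Q_0 \oplus \kk Q_1$ (degree $\le 1$), the image must be a trivial path $e_\bullet$ or a single arrow. Matching endpoints: $\sigma(a_i)$ can be a single arrow only when $i+1+d \equiv i \pm 1 \pmod n$, i.e.\ $d = n-2$ (forcing the starred arrow $a_{i-1}^*$) or $d = n$ (excluded), and it can be a trivial loop $e_i$ only when $i+1+d \equiv i$, i.e.\ $d = n-1$. Symmetrically, $\sigma(a_i^*)$ is a single arrow $a_{i+1}$ when $d=2$ and a trivial loop $e_{i+1}$ when $d=1$. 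This immediately yields the four-case split and the vanishing $\sigma(a_i)=0$ for $d \ne n-1,n-2$ and $\sigma(a_i^*)=0$ for $d \ne 1,2$.

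\textbf{Next I would pin down the scalars using the $G$-equivariance condition \ref{sig3}.} In each surviving case write $\sigma(a_i) = \eta_i\,(\text{arrow})$ for an unknown $\eta_i \in \kk$. Applying \ref{sig3}, namely $\sigma(g\cdot a_i) = \lambda\inv\, g\cdot\sigma(a_i)$, and using $g\cdot a_i = \mu_i a_{i+d}$ together with the $g$-action on the target arrow (via the $\mu_j$ or $\mu_j^*$), gives a recursion relating $\eta_{i+d}$ to $\eta_i$. For instance, in the $d=n-1$ case where $\sigma(a_i) = \eta_i e_i$, I would compute $\sigma(g\cdot a_i) = \mu_i\,\eta_{i+d} e_{i+d}$ on the left and $\lambda\inv g\cdot(\eta_i e_i) = \lambda\inv \eta_i e_{i+d}$ on the right, yielding $\eta_{i+d} = \lambda\inv \mu_i\inv \eta_i$. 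Iterating this recursion from a base case $i=0$ (introducing the free constant $c_0$, and in the $d=n-2,2$ cases the parity-indexed constants $c_0,c_1$ and $c_0^*,c_1^*$ because stepping by $d$ shifts the index parity) telescopes to the closed-form products of $\mu$'s and powers of $\lambda$ displayed in \eqref{eq.rot_sigma1} and \eqref{eq.rot_sigma2}. The even/odd dichotomy for $d=n-2$ and $d=2$ arises precisely because $a_i \mapsto a_{i-1}^*$ alternates the roles of starred and unstarred scalars along the orbit.

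\textbf{Finally,} I would check consistency with \eqref{eq.gam_sig}, the constraint $\gamma_{s(a)}^r g^r(a) - \gamma_{t(a)}^r a = \sigma^r(a)$, and confirm that the constants $c_0,c_1,c_0^*,c_1^*$ are the only remaining freedom. In the cases where $\sigma$ lowers degree (sending an arrow to a trivial loop, i.e.\ $d=1,n-1$) the map $\sigma^r$ kills every arrow, so \eqref{eq.gam_sig} forces relations among the $\gamma_i$; since inner faithfulness and \eqref{eq.gam_rel} already constrain these, I expect no further restriction on the $c$'s at this stage, only the closed forms above.

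\textbf{The main obstacle} will be the bookkeeping in the telescoping recursion for the $d=n-2$ and $d=2$ cases: here $\sigma$ swaps starred and unstarred arrows, so a single application of \ref{sig3} relates $\eta$-values at indices differing by $d=n-2$ (equivalently $-2 \bmod n$), and carefully tracking how the alternating product $\mu_i\mu_{i-2}\cdots$ over $\mu_{i-1}^*\mu_{i-3}^*\cdots$ accumulates — together with the correct power of $\lambda$ and the correct parity base case — is the delicate part. I would handle this by induction on the number of steps $k$ with $i = i_0 + kd$, verifying the stated formula's ratio and $\lambda$-exponent increment at each step rather than attempting a direct global evaluation.
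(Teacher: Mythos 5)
Your proposal is correct and follows essentially the same route as the paper: use \ref{sig2} to show $\sigma(a_i)$ is supported on paths from $e_i$ to $e_{i+d+1}$ (and $\sigma(a_i^*)$ from $e_{i+1}$ to $e_{i+d}$), forcing the four-case split, then use \ref{sig3} to derive a recursion in the scalar coefficients (stepping by $d$, hence the parity-indexed base constants when $d=n-2$ or $d=2$) and telescope it to the closed forms. The appeal to \eqref{eq.gam_sig} at the end is not needed for this lemma (the paper defers that constraint to the subsequent results), but it does no harm.
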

\begin{proof}
By \ref{sig2},
\begin{align*}
\sigma(a_i) 
    &= e_{s(a_i)} \sigma(a_i) e_{g \cdot t(a_i)}
    = e_i \sigma(a_i) e_{i+d+1} \\
\sigma(a_i^*) 
    &= e_{s(a_i^*)} \sigma(a_i^*) e_{g \cdot t(a_i^*)}
    = e_{i+1} \sigma(a_i^*) e_{i+d}.
\end{align*}
Since $n > 2$, then $\sigma(a_i)\neq 0$ implies $i-1\leq i+d+1\leq i+1 $, so $n-2 \leq d \leq n$. Similarly, $\sigma(a_i^*)\neq 0$ implies $i\leq i+d\leq i+2 $, so $0 \leq d \leq 2$. It follows that
\begin{align}\label{eq.rot_sigma0}
\sigma(a_i) = \begin{cases}
    c_i a_{i-1}^* & \text{ if $d=n-2$} \\
    c_i e_i & \text{ if $d=n-1$} \\
    0       & \text{ otherwise},   
\end{cases} \qquad 
\sigma(a_i^*) = \begin{cases}
    c_i^* a_{i+1} & \text{ if $d=2$} \\
    c_i^* e_{i+1} & \text{ if $d=1$} \\
    0       & \text{ otherwise},   
\end{cases} 
\end{align}
where $c_i,c_i^* \in \kk$ for all $i \in Q_0$. 

Since $\sigma$ is a quiver-Taft map for $Q$, then by \ref{sig3},
$\sigma(g\cdot a_i)=\lambda^{-1}g\cdot\sigma(a_i)$.
A case-by-case analysis using this along with \eqref{eq.rot_sigma0} gives the following:
\begin{align*}
&(d=n-1) & 
    \mu_ic_{i-1} e_{i-1}
        &= \mu_i\sigma(a_{i-1})
        = \sigma(g \cdot a_i)
        = \lambda\inv g \cdot \sigma(a_i)
        = \lambda\inv g \cdot (c_ie_i)
        = \lambda\inv c_ie_{i-1} \\
&(d=1) & 
    \mu_i^*c_{i+1}^* e_{i+2} 
        &= \mu_i^*\sigma(a_{i+1}^*) 
        = \sigma(g \cdot a_i^*) 
        = \lambda\inv g \cdot \sigma(a_i^*)
        = \lambda\inv g\cdot (c_i^*e_{i+1}) 
        = \lambda\inv c_i^*e_{i+2} \\
&(d=n-2) & 
    \mu_i c_{i-2} a_{i-3}^*
        &= \sigma(\mu_i a_{i-2})
        = \sigma(g \cdot a_i) 
        = \lambda\inv g\cdot \sigma(a_i)
        = \lambda\inv g\cdot (c_ia_{i-1}^*)
        = \lambda\inv c_i \mu_{i-1}^*a_{i-3}^* \\
&(d=2) & 
    \mu_i^*c_{i+2}^* a_{i+3} 
        &= \mu_i^*\sigma(a_{i+2}^*) 
        = \sigma(g \cdot a_i^*) 
        = \lambda\inv g \cdot \sigma(a_i^*)
        = \lambda\inv g\cdot (c_i^*a_{i+1}) 
        = \lambda\inv c_i^*\mu_{i+1}a_{i+3}.
\end{align*}
Thus, the $c_i,c_i^*$ satisfy the following:
\begin{equation}\label{eq.rot_c}
c_i = \begin{cases}
    \lambda\mu_ic_{i-1} & \text{if $d=n-1$} \\
    \lambda\frac{\mu_i}{\mu_{i-1}^*}c_{i-2} & \text{if $d=n-2$},
\end{cases} \qquad
c_i^*
    = \begin{cases}
    (\lambda\mu_{i-1}^*)\inv c_{i-1}^* & \text{if $d=1$} \\
    \lambda\inv \frac{\mu_{i-1}}{\mu_{i-2}^*}c_{i-2}^* & \text{if $d=2$.}
\end{cases}
\end{equation}

Now, in the case $d=n-1$,
\begin{equation*}
c_i = \lambda\mu_i c_{i-1} = \lambda^2\mu_i\mu_{i-1} c_{i-2}= \cdots =\lambda^i(\mu_i\mu_{i-1} \cdots \mu_1)c_0.
\end{equation*}
The other cases are similar. 
\end{proof}

The next lemma establishes restrictions on the 
$\gamma_i$ and their relation to the $\mu_i,\mu_i^*$.

\begin{lemma}\label{lem.rot_gamma}
Suppose $\gtaft$ acts linearly and inner faithfully on $\kk\overline{Q}$ so that $g$ acts on $\kk\overline{Q}$ via a rotation automorphism with $g\cdot e_i=e_{i+d}$ for some $0 < d \leq n-1$. 
Suppose there exists $j\in Q_0$ with $\gamma_j\neq 0$. Then $r=\kappa$. If $n \neq 4$ or $d \neq 2$, then the following hold:
\begin{enumerate}
 \item \label{rot_gam1}
    $\gamma_{i+1}^r 
	       = (\mu_i \mu_{i+d} \cdots \mu_{i+(r-1)d})\gamma_i^r
    	   =(\mu_i^* \mu_{i+d}^* \cdots \mu_{i+(r-1)d}^*)\inv\gamma_i^r$ 
    or equivalently, $\gamma_{i+1}^r=\zeta_i\gamma_i^r=(\zeta_i^*)\inv\gamma_i^r$,
    with $\zeta_i,\zeta_i^*$ as in \eqref{eq.muprod-1root},
    \item \label{rot_gam2} $\gamma_i\neq 0$ for all $i$,
    \item \label{rot_gam3} $\zeta_i^*=\zeta_i\inv$,
    \item \label{rot_gam4} $\mu_0\mu_1 \cdots \mu_{n-1} = \mu_0^* \cdots \mu_{n-1}^*=1$,
    \item \label{rot_gam5} $m=n$ if $\gcd(n,d)=1$.
\end{enumerate}
\end{lemma}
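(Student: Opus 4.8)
The plan is to read everything off from the single structural constraint \eqref{eq.gam_sig} of Theorem \ref{thm.quiveract}, evaluated on the arrows $a_i$ and $a_i^*$, after first pinning down the orbit size. Since $g$ acts by rotation, every vertex orbit has the same size $\kappa$, so the existence of one $j$ with $\gamma_j \neq 0$ forces $\#(G\cdot j)=r$ by Proposition \ref{prop.vertact}(ii); hence $r=\kappa$. I would record immediately that, because $r=\kappa$ and $rd=\kappa d\equiv 0 \pmod n$, iterating $g\cdot a_i=\mu_i a_{i+d}$ gives $g^r(a_i)=\zeta_i a_i$ and likewise $g^r(a_i^*)=\zeta_i^* a_i^*$, with $\zeta_i,\zeta_i^*$ the orbit products of \eqref{eq.muprod-1root}.

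The heart of \eqref{rot_gam1} is the claim that $\sigma^r(a_i)=\sigma^r(a_i^*)=0$ whenever $(n,d)\neq(4,2)$, and I would prove this by a short case analysis driven by Lemma \ref{lem.rot_sigma}: $\sigma$ is supported on unstarred arrows only when $d\in\{n-2,n-1\}$ and on starred arrows only when $d\in\{1,2\}$. When $d$ avoids all four values, $\sigma=0$ outright; when $d=n-1$ or $d=1$ the image of $\sigma$ lands in $\kk Q_0$ and is killed by one further application of $\sigma$ via \ref{sig1}, so $\sigma^2=0$ and a fortiori $\sigma^r=0$ as $r\geq 2$; and when $d=n-2$ or $d=2$ with $n\geq 5$ the two halves of $\sigma$ do not interact, so again $\sigma^2=0$. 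The only way $\sigma$ can cycle an arrow back to a scalar multiple of itself is if $d=n-2$ and $d=2$ hold simultaneously, i.e. $n=4,\,d=2$, which is exactly the excluded pair. The residual small-$n$ overlaps (namely $n=3$, where $d$ may be both $n-2$ and $1$, or both $n-1$ and $2$) produce $\sigma^3=0$, and there $r=\kappa=3$, so $\sigma^r$ still vanishes exactly. With $\sigma^r=0$, substituting $g^r(a_i)=\zeta_i a_i$ into \eqref{eq.gam_sig} yields $\gamma_{i+1}^r=\zeta_i\gamma_i^r$, and the same computation on $a_i^*$ (whose source is $i+1$ and target $i$) yields $\zeta_i^*\gamma_{i+1}^r=\gamma_i^r$; these are precisely the two equalities of \eqref{rot_gam1}.

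Parts \eqref{rot_gam2} and \eqref{rot_gam3} then follow formally. Since each $\zeta_i$ is a nonzero root of unity, the relation $\gamma_{i+1}^r=\zeta_i\gamma_i^r$ shows $\gamma_{i+1}^r=0\iff\gamma_i^r=0$; starting from $\gamma_j\neq 0$ and propagating around the cycle gives $\gamma_i\neq 0$ for all $i$, which is \eqref{rot_gam2}. Comparing the two expressions for $\gamma_{i+1}^r$ in \eqref{rot_gam1} and cancelling the now-nonzero $\gamma_i^r$ gives $\zeta_i=(\zeta_i^*)^{-1}$, which is \eqref{rot_gam3}. For part \eqref{rot_gam4} I would set $P=\mu_0\cdots\mu_{n-1}$ and exploit two independent telescopings of $\gamma_{i+1}^r=\zeta_i\gamma_i^r$: running it once fully around the cycle gives $\prod_i\zeta_i=1$, while comparing it over $d$ steps against the identity $\gamma_{i+d}^r=\lambda^{-r}\gamma_i^r=\gamma_i^r$ (from $\lambda^r=1$ and \eqref{eq.gam_rel}) gives $\prod_{l=0}^{d-1}\zeta_{i+l}=1$. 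Using that $\zeta_i$ depends only on $i \bmod \tau$ where $\tau=\gcd(n,d)$, and that the orbits partition $Q_0$, both products reorganize into powers of $P$, namely $P^r=1$ and $P^{d/\tau}=1$. The decisive arithmetic point is that $\gcd(r,d/\tau)=\gcd(n/\tau,d/\tau)=1$, whence $P=1$; the starred statement follows identically, or directly from \eqref{rot_gam3}.

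Finally, for part \eqref{rot_gam5}, assume $\gcd(n,d)=1$, so $\tau=1$ and $r=\kappa=n$. Then $g^n$ fixes every vertex, and by part \eqref{rot_gam4} it acts on $a_i$ and $a_i^*$ by the full products $P=P^*=1$, so $g^n$ is the identity automorphism of $\Pi_Q$. If $m>n$, then $g^n-1$ is a nonzero element of $\gtaft$ generating the Hopf ideal of the quotient $T_\lambda(n,m)\to T_\lambda(n,n)$; since $g^n$ acts trivially, this Hopf ideal annihilates $\Pi_Q$, contradicting inner faithfulness, and therefore $m=n$. I expect the two delicate steps to be the $\sigma^r=0$ case analysis, which is exactly what isolates the exceptional pair $(4,2)$, and the passage from $P^r=1$ to $P=1$ in part \eqref{rot_gam4}, where the coprimality $\gcd(n/\tau,d/\tau)=1$ is essential rather than incidental.
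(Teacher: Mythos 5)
Your proof is correct and follows essentially the same route as the paper's: establish $r=\kappa$ from Proposition \ref{prop.vertact}, show $\sigma^r=0$ whenever $(n,d)\neq(4,2)$, and then read parts (1)--(3) off of \eqref{eq.gam_sig} using $g^r(a_i)=\zeta_i a_i$. The paper simply asserts the vanishing of $\sigma^r$ inside this proof and defers its justification to a separate lemma (Lemma \ref{lem.rotsigma}); your inline case analysis, including the $n=3$ overlaps where $\sigma^2\neq 0$ but $\sigma^3=0=\sigma^r$, matches that lemma's content. The one place you genuinely diverge is part (4): writing $P=\mu_0\mu_1\cdots\mu_{n-1}$, the paper telescopes $\gamma_{i+1}^r=\zeta_i\gamma_i^r$ once across the transversal $i=0,\dots,\tau-1$ and closes the loop with $\gamma_\tau^r=\gamma_0^r$ (which holds because $\tau$ lies on the $g$-orbit of $0$, so $\gamma_\tau$ is a power of $\lambda$ times $\gamma_0$), obtaining $P=1$ in a single stroke; you instead derive the two relations $P^r=1$ (full cycle) and $P^{d/\tau}=1$ ($d$-step comparison with $\gamma_{i+d}=\lambda\inv\gamma_i$) and finish with $\gcd(n/\tau,d/\tau)=1$. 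Both are valid; the paper's is shorter, while yours makes the arithmetic role of $\gcd(n,d)$ explicit. Your part (5) is also fine and is in fact more explicit than the paper's about why $g^n=\mathrm{id}$ together with inner faithfulness forces $m\mid n$, namely via the nonzero Hopf ideal generated by $g^n-1$.
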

\begin{proof}
Note that $\#(G\cdot i)=\frac{n}{\gcd{(n,d)}}$ for every $i \in \overline{Q}_0$.
Then $\#(G\cdot j)=\frac{n}{\gcd(n,d)}=r$ by Proposition \ref{prop.vertact}.
Let $s=\frac{n}{r}$ so that $s$ counts the number of (distinct) $G$-orbits of $Q_0$.

\eqref{rot_gam1} 
Since $n\neq4$ or $d\neq 2$, $\sigma^r(a_i)=\sigma^r(a_i^*)=0$ for all $i \in Q_0$. Thus, by \eqref{eq.gam_sig},
\begin{align*}
0   &= \sigma^r(a_i) 
    = \gamma_i^r g^r(a_i) - \gamma_{i+1}^r a_i
    = (\gamma_i^r \mu_i \mu_{i+d} \cdots \mu_{i+(r-1)d} - \gamma_{i+1}^r) a_i \\
0   &= \sigma^r(a_i^*)
    = \gamma_{i+1}^r g^r(a_i^*) - \gamma_i^r a_i^*
    = (\gamma_{i+1}^r\mu_i^*\mu_{i+d}^*\cdots\mu_{i+(r-1)d}^*-\gamma_i^r)a_i^*.
\end{align*}

\eqref{rot_gam2}
This follows directly from \eqref{rot_gam1}.

\eqref{rot_gam3} This follows directly from \eqref{rot_gam1} and \eqref{rot_gam2}.

\eqref{rot_gam4} Using \eqref{rot_gam1},
\begin{align*}
\gamma_0^r(\mu_0 \mu_1 \cdots \mu_{n-1})
    &= \gamma_0^r\prod_{i=0}^{s-1} \mu_i \mu_{i+d} \cdots \mu_{i+(r-1)d}
    = \gamma_1^r\prod_{i=1}^{s-1} \mu_i \mu_{i+d} \cdots \mu_{i+(r-1)d}\\
    &= \cdots = \gamma_{s-1}^r(\mu_{s-1} \mu_{(s-1)+d} \cdots \mu_{(s-1)+(r-1)d}) = \gamma_0^r.
\end{align*}
Repeating this process with the $\sigma^r(a_i^*)$ gives the result. 

\eqref{rot_gam5} 
Clearly, $g^n \cdot e_0 = e_0$. If $\gcd(n,d)=1$, then 
$g^n(a_i) = \mu_i \mu_{i+1} \cdots \mu_{i-1} a_i$. Thus, by \eqref{rot_gam4}, $g^n(a_i)=a_i$. A similar argument shows that
$g^n(a_i^*)=a_i^*$. Thus $g^n=1$ and because the action is inner faithful, then $m \mid n$. But by \eqref{rot_gam1}, $n=r$ and $r \mid m$, so $m=n$.
\end{proof}

\begin{remark}
Keep the hypotheses of the previous lemma 
and suppose $n$ is odd and $d=n-2$. In this case 
\begin{align*}
    c_0 &= c_n 
        = \lambda^{(n-1)/2} \left( \frac{\mu_n\mu_{n-2} \cdots \mu_3}{\mu_{n-1}^*\mu_{n-3}^*\cdots \mu_2^*} \right) c_1
        = \lambda^{(n-1)/2} \left( \frac{\mu_n\mu_{n-2} \cdots \mu_3}{\mu_{n-1}^*\mu_{n-3}^*\cdots \mu_2^*} \right) \left( \lambda\frac{\mu_1}{\mu_0^*}c_{n-1}\right) \\
        &= \lambda^{(n+1)/2} \left( \frac{\mu_n\mu_{n-2} \cdots \mu_1}{\mu_{n-1}^*\mu_{n-3}^*\cdots \mu_0^*} \right) 
        \cdot \lambda^{(n-1)/2}
        \left( \frac{\mu_{n-1}\mu_{n-3} \cdots \mu_2}{\mu_{n-2}^*\mu_{n-4}^*\cdots \mu_1^*} \right)c_0
        = \lambda^n \left(  \frac{\mu_{n-1}\mu_{n-2} \cdots \mu_0}{\mu_{n-1}^*\mu_{n-2}^*\cdots \mu_0^*} \right)c_0.
\end{align*}
It follows that either $c_0 \neq 0$ or $c_i=0$ for all $i$.
Moreover, if $r \mid n$ then
\[\mu_{n-1}\mu_{n-2} \cdots \mu_0=\mu_{n-1}^*\mu_{n-2}^*\cdots \mu_0^*.\]
An analogous result holds in the case that $n$ is odd and $d=2$.
\end{remark}

It will be advantageous to have an upper bound on the nilpotency of $\sigma$. The next corollary shows that this index is $r$, except in the case of $n=4$ and $d=2=n-2$, which is considered at the end of the section.

\begin{lemma}\label{lem.rotsigma}
With the hypotheses of Lemma \ref{lem.rot_sigma}, if $n\neq 4$ or $d \neq 2$, then $\sigma^r(a)=0$ for any $a \in Q_1$.
\end{lemma}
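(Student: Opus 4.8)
The plan is to show that applying $\sigma$ repeatedly $r$ times annihilates every arrow, splitting into the four cases from Lemma \ref{lem.rot_sigma} according to the value of $d$. In the cases $d \neq 1,2,n-2,n-1$ the map $\sigma$ vanishes on all arrows by \eqref{eq.rot_sigma0}, so $\sigma^r = 0$ trivially and there is nothing to prove. The substantive cases are $d = n-1$ (and the symmetric $d=1$), where $\sigma$ sends unstarred (resp.\ starred) arrows to vertex idempotents, and $d = n-2$ (and the symmetric $d=2$), where $\sigma$ swaps unstarred and starred arrows. Since $d=n-2$ forces $d=2$ only when $n=4$, and that single exceptional overlap is precisely what the hypothesis $n\neq 4$ or $d\neq 2$ excludes, the four cases are genuinely disjoint under our hypotheses.

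The key computation is as follows. In the case $d=n-1$, Lemma \ref{lem.rot_sigma} gives $\sigma(a_i) = c_i e_i$ with $c_i \in \kk$, and $\sigma$ kills idempotents by \ref{sig1}. Hence $\sigma^2(a_i) = c_i\,\sigma(e_i) = 0$, so in fact $\sigma^2(a_i)=0$, which certainly gives $\sigma^r(a_i)=0$ since $r > 1$; moreover $\sigma(a_i^*)=0$ here, so $\sigma^r(a_i^*)=0$ as well. The symmetric case $d=1$ is identical with the roles of starred and unstarred arrows exchanged. The only case where $\sigma$ does not immediately square to zero on arrows is $d=n-2$ (resp.\ $d=2$), where $\sigma(a_i) = c_i a_{i-1}^*$ and $\sigma(a_i^*)$ may be nonzero only when $d=2$. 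Under our running hypothesis $d = n-2$ with $n \neq 4$, we have $d \neq 2$, so $\sigma(a_i^*) = 0$ by \eqref{eq.rot_sigma0}; therefore $\sigma^2(a_i) = \sigma(c_i a_{i-1}^*) = c_i\,\sigma(a_{i-1}^*) = 0$, and again $\sigma^r(a_i)=0$. The symmetric statement for $d=2$ with $n \neq 4$ uses $d \neq n-2$ to conclude $\sigma(a_i)=0$, so $\sigma^2(a_i^*)=0$.

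In short, the mechanism is that away from the exceptional pair $(n,d)=(4,2)$, the two phenomena that produce a nonzero $\sigma$ on arrows — mapping to idempotents and swapping starred/unstarred — never occur simultaneously, so one application of $\sigma$ lands in a subspace on which $\sigma$ vanishes. This makes $\sigma^2 = 0$ on $\kk Q_1$, which is strictly stronger than the claimed $\sigma^r = 0$; the statement is phrased with $r$ only because that is the bound needed downstream in \eqref{eq.gam_sig} and in Lemma \ref{lem.rot_gamma}.

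\emph{Main obstacle.} There is no serious obstacle here: the content is entirely in correctly tracking which of $\sigma(a_i)$, $\sigma(a_i^*)$ can be nonzero in each case and verifying that the two cases cannot overlap. The one point requiring care is the bookkeeping that identifies $(n,d)=(4,2)$ as the unique configuration in which $d=n-2$ and $d=2$ coincide, since that is exactly the excluded case and is what prevents the clean $\sigma^2=0$ argument from closing. The scalar factors $\lambda^{i/2}$ and the $\mu$-ratios recorded in Lemma \ref{lem.rot_sigma} are irrelevant to the nilpotency claim and can be ignored.
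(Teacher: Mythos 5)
There is a genuine gap: your case analysis misses $n=3$. You assert that the only overlap among the four cases $d\in\{1,2,n-2,n-1\}$ is $(n,d)=(4,2)$, but when $n=3$ one has $n-2=1$ and $n-1=2$, so the ``idempotent'' and ``arrow-swapping'' behaviours occur simultaneously. Concretely, for $n=3$ and $d=1=n-2$, Lemma \ref{lem.rot_sigma} gives both $\sigma(a_i)=c_ia_{i-1}^*$ and $\sigma(a_i^*)=c_i^*e_{i+1}$, so $\sigma^2(a_i)=c_ic_{i-1}^*e_i$, which need not vanish; your step ``$d\neq 2$, so $\sigma(a_i^*)=0$'' is exactly where the argument breaks, since $d=1$ also makes $\sigma(a_i^*)$ nonzero. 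The same failure occurs for $n=3$, $d=2=n-1$, where $\sigma^2(a_i^*)=c_i^*c_{i+1}e_{i+1}$. Both of these configurations satisfy the hypothesis ``$n\neq 4$ or $d\neq 2$'' and so are in the scope of the lemma; in particular your stronger claim that $\sigma^2=0$ on $\kk Q_1$ is false in general.

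The paper closes this gap by treating $n=3$ separately: if some $\gamma_j\neq 0$, then Proposition \ref{prop.vertact} forces $r=\#(G\cdot j)=3$, and three applications of $\sigma$ always terminate in a vertex idempotent, which $\sigma$ kills by \ref{sig1}, so $\sigma^3=0$; if instead $\gamma_i=0$ for all $i$, then $x\cdot a=\sigma(a)$ on arrows and $x\cdot e_i=0$, whence $x^r\cdot a=\sigma^r(a)$, and $x^r=0$ in $\gtaft$ gives the claim (this is where the conclusion genuinely requires the exponent $r$ rather than $2$). Your argument for $n\geq 4$ with $d\neq 2$ is fine and matches the paper's, but you must add the $n=3$ analysis to have a complete proof.
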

\begin{proof}
If $n>4$, then at least one of $\sigma(a_i)$ and $\sigma(a_i^*)$ is zero, and therefore $\sigma^2(a_i)=\sigma^2(a_i^*)=0$ for all $i$.

If $n=3$, then the cases $d=n-2$ and $d=1$ coincide, and the cases $d=n-1$ and $d=2$ coincide. Suppose $d=n-2=1$. If $\gamma_i=0$ for $0\leq i \leq 2$, then $x^r(a)=\sigma^r(a)$ for all $a\in \cQ_1$. Since $x^r(a)=0$, it follows that $\sigma^r(a)=0$. 
On the other hand, if there exists a $j\in Q_0$ so that $\gamma_j\neq 0$, then $r= 3$. By Lemma \ref{lem.rot_sigma}, $\sigma^3(a)=0$ for all $a\in Q_1$. The argument for the $d=n-1=2$ case is similar.
\end{proof}

Recall that for the actions described above on $\kk\overline{Q}$ to descend to actions on $\Pi_Q$, it is necessary to verify \eqref{eq.mu_rel} and \eqref{eq.sigrel}. The implications of
these identities are considered in the next two lemmas.

\begin{lemma}\label{lemma.rot_desc}
Suppose $\gtaft$ acts linearly and inner faithfully on $\kk\overline{Q}$ so that $g$ acts on $\kk\overline{Q}$ via a rotation automorphism with $g\cdot e_i=e_{i+d}$ for some $0 < d \leq n-1$, and this action descends to an inner faithful action on $\Pi_Q$. Assume $n \neq 4$ or $d \neq 2$.
If there exists a vertex $j$ so that $\gamma_j\neq 0$, then for every $i\in Q_0$,
\begin{enumerate}
	\item\label{desc1} there exists an integer $z$ so that $\mu_i\mu_i^*=\lambda^ z$,
	\item\label{desc2} for every $0\leq k <\tau$ such that $k\equiv i \,\mod{\tau}$, there exists an $m\th$ root of unity, $\xi_k$, so that 
    \begin{enumerate}
    \item $\gamma_{i+1}=\xi_k\gamma_i$,
    \item $\zeta_k=\xi_k^r$, and
    \item $(\xi_k\xi_{k+1}\dotsc\xi_{k+\tau-1})^{\frac{rd}{n}}=\lambda\inv$.
    \end{enumerate}
\end{enumerate}
\end{lemma}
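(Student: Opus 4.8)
The plan is to read (1) directly off the descent relation \eqref{eq.mu_rel} and to derive all of (2) from the $r$-th power relations of Lemma \ref{lem.rot_gamma} together with a telescoping argument across the $\tau$ vertex orbits; the relation \eqref{eq.sigrel} will not be needed. I begin with (1). Relation \eqref{eq.mu_rel} says precisely that $\mu_i\mu_i^*$ is independent of $i$; write $\beta \in \kk^\times$ for this common value. Grouping the factors of the orbit products along the orbit $\{i,i+d,\dots,i+(r-1)d\}$ gives
\[
\zeta_i\zeta_i^* \;=\; \prod_{k=0}^{r-1}\bigl(\mu_{i+kd}\,\mu_{i+kd}^*\bigr) \;=\; \beta^r,
\]
while Lemma \ref{lem.rot_gamma}\eqref{rot_gam3} gives $\zeta_i^* = \zeta_i\inv$, so $\beta^r = 1$. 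Since $\lambda$ is a primitive $r\th$ root of unity, the group of $r\th$ roots of unity in $\kk$ is generated by $\lambda$, and hence $\mu_i\mu_i^* = \beta = \lambda^z$ for some integer $z$. This is (1).

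Next I would set up the scalars $\xi_k$. Iterating $\gamma_{i+d}=\lambda\inv\gamma_i$ (Proposition \ref{prop.vertact}, cf.\ \eqref{eq.gam_rel}) shows the ratio $\gamma_{i+1}/\gamma_i$ is invariant under $i\mapsto i+d$, hence depends only on the class of $i$ modulo $\tau = \gcd(n,d)$; since all $\gamma_i\neq 0$ by Lemma \ref{lem.rot_gamma}\eqref{rot_gam2}, I may define $\xi_k := \gamma_{i+1}/\gamma_i$ for any $i\equiv k\pmod{\tau}$, which is exactly (2)(a) and is well defined. Raising to the $r$-th power and invoking Lemma \ref{lem.rot_gamma}\eqref{rot_gam1} yields $\xi_k^r = \gamma_{i+1}^r/\gamma_i^r = \zeta_i$; and $\zeta_i$ is constant on orbits because $i+rd\equiv i \pmod n$, so $\xi_k^r = \zeta_k$, giving (2)(b). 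Because $r=\kappa$, \eqref{eq.muprod-1root} says $\zeta_k$ is an $(m/r)\th$ root of unity, whence $\xi_k^m = (\xi_k^r)^{m/r} = \zeta_k^{m/r} = 1$; thus $\xi_k$ is an $m\th$ root of unity.

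For (2)(c) I would telescope. Taking the representatives $i = k,k+1,\dots,k+\tau-1$ collapses the product,
\[
\xi_k\xi_{k+1}\cdots\xi_{k+\tau-1} \;=\; \frac{\gamma_{k+\tau}}{\gamma_k}.
\]
Choosing by B\'ezout an integer $t$ with $td\equiv \tau \pmod n$ and iterating $\gamma_{i+d}=\lambda\inv\gamma_i$ a total of $t$ times gives $\gamma_{k+\tau} = \gamma_{k+td} = \lambda^{-t}\gamma_k$, so the product equals $\lambda^{-t}$. Since $n = r\tau$, the exponent $rd/n = d/\tau$ is a genuine integer, and writing $td = \tau + \ell n$ gives $td/\tau = 1 + \ell r$; therefore
\[
\bigl(\xi_k\cdots\xi_{k+\tau-1}\bigr)^{rd/n} \;=\; \lambda^{-t\,d/\tau} \;=\; \lambda^{-(1+\ell r)} \;=\; \lambda\inv,
\]
using that $\lambda$ has order $r$. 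This establishes (2)(c).

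I expect the index bookkeeping in (2)(c) to be the main obstacle: one must check that $rd/n$ is an integer, track the exponent of $\lambda$ only modulo $r$, and verify through the B\'ezout relation $td\equiv\tau$ that the total exponent reduces to exactly $-1$ rather than some other residue class. The other parts are essentially immediate consequences of \eqref{eq.mu_rel} and Lemma \ref{lem.rot_gamma}, once the orbit invariance of $\gamma_{i+1}/\gamma_i$ and of $\zeta_i$ is recorded.
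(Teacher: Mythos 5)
Your proof is correct and follows essentially the same route as the paper's: part (1) comes from \eqref{eq.mu_rel} combined with Lemma \ref{lem.rot_gamma}\eqref{rot_gam3} to get $(\mu_i\mu_i^*)^r=1$, and part (2) comes from the $r$-th power relations of Lemma \ref{lem.rot_gamma}\eqref{rot_gam1} together with telescoping the relation $\gamma_{i+d}=\lambda^{-1}\gamma_i$. The only cosmetic difference is in (2)(c), where you evaluate $\gamma_{k+\tau}/\gamma_k$ via a B\'ezout relation $td\equiv\tau \pmod n$, whereas the paper telescopes over $d$ consecutive indices to obtain $\xi_i\xi_{i+1}\cdots\xi_{i+d-1}=\lambda^{-1}$ and then regroups using the periodicity $\xi_i=\xi_{i+d}$; both computations are valid and yield the same conclusion.
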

\begin{proof}
\eqref{desc1} By Lemma \ref{lem.rot_gamma}\eqref{rot_gam3}, for $0\leq k<d$,
\[ \mu_k\mu_{k+d}\dotsc\mu_{k+(r-1)d}\mu_k^*\mu_{k+d}^*\dotsc\mu_{k+(r-1)d}^*=1.\] 
By \eqref{eq.mu_rel}, each pair $\mu_{j'}\mu_{j'}^*$ may be replaced with $\mu_i\mu_i^*$ for $i\equiv k \,\mod{d}$ and so $(\mu_i\mu_i^*)^r=1$. It follows that $\mu_i\mu_i^*$ is an $r\th$ root of unity, so $\mu_i\mu_i^*=\lambda^z$ for some $0\leq z <r$ for all $i\in Q_0$.

\eqref{desc2} Raising both sides of the equality in \ref{lem.rot_gamma}\eqref{rot_gam1} to the $\left(\frac{m}{r}\right)\th$ power for each $i\in Q_0$ gives $\gamma_{i+1}^m=\gamma_{i}^m$. It follows that there exists an $m\th$ root of unity $\xi_i$ so that $\gamma_{i+1}=\xi_i\gamma_i$. Further, if for $0\leq k<\tau$, $k\equiv i \,\mod{\tau}$,
\[ \zeta_k\gamma_i^r=\gamma_{i+1}^r=\xi_i^r\gamma_i^r, \text{ and } \lambda\inv\gamma_i= \gamma_{i+d} =\xi_k\xi_{i+1}\dotsc\xi_{i+d-1}\gamma_i,\]
so $\zeta_i=\xi_i^r$ and $\xi_i\xi_{i+1}\dotsc\xi_{i+d-1}=\lambda\inv$. Since 
\[ \xi_i\xi_{i+1}\dotsc\xi_{i+d-1}=\lambda\inv=\xi_{i+1}\xi_{i+2}\dotsc\xi_{i+d},\]
it must be that $\xi_i=\xi_{i+d}$. Therefore, there are at most $\tau=\frac{n}{r}$ unique values of $\xi_i$, and since $\frac{\tau}{d}=\frac{rd}{n}$, the claim follows.
\end{proof}
 
\begin{lemma}\label{lem.rot_cmu}
Suppose $\gtaft$ acts linearly and inner faithfully on $\kk\overline{Q}$ so that $g$ acts on $\kk\overline{Q}$ via a rotation automorphism with $g\cdot e_i=e_{i+d}$ for some $0 < d \leq n-1$, and this action descends to an inner faithful action on $\Pi_Q$.
There exist $c,c^*\in\kk$ so that
\begin{align}\label{eq.new_sigma}
\sigma(a_i) = 
\begin{cases}
\lambda^i\mu_1\mu_2\dotsc\mu_ice_i & \text{if $d=n-1$}\\
\lambda^i\mu_1\mu_2\dotsc\mu_ica_{i-1}^* & \text{if $d=n-2$}\\
0 & \text{if $d\neq n-1,n-2$},
\end{cases} \qquad
\sigma(a_i^*) = 
\begin{cases}
(\lambda^i\mu_0^*\mu_1^*\dotsc\mu_{i-1}^*)\inv c^*e_{i+1}& \text{if $d=1$}\\
(\lambda^i\mu_0^*\mu_1^*\dotsc\mu_{i-1}^*)\inv c^*a_{i+1} & \text{if $d=2$}\\
0 & \text{if $d\neq 1,2$}.
\end{cases}
\end{align} 
If $c$ or $c^*$ is nonzero, then $\mu_0\mu_1\dotsc\mu_{n-1}=1$ or $\mu_0^*\mu_1^*\dotsc\mu_{n-1}^*=1$, respectively, $\mu_i\mu_i^*=\lambda\inv$ for all $i\in Q_0$, and if $n$ or $d$ is odd, $m=\lcm(r,n)$.
\end{lemma}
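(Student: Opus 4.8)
The plan is to refine the description of $\sigma$ from Lemma \ref{lem.rot_sigma} by feeding the descent conditions \eqref{eq.mu_rel} and \eqref{eq.sigrel} back into the formulas \eqref{eq.rot_sigma0} and the recurrence \eqref{eq.rot_c}. I would treat the unstarred constant $c$ (relevant only when $d \in \{n-1,n-2\}$, where $\sigma(a_i)\neq 0$) and the starred constant $c^*$ (relevant when $d\in\{1,2\}$) separately: for $n>4$ these ranges of $d$ are disjoint, so the two halves decouple, and the excluded case $n=4$, $d=2=n-2$ is set aside for the end of the section. I will describe the argument for $\sigma(a_i)$; the argument for $\sigma(a_i^*)$ is entirely dual.

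First I would substitute $\sigma(a_i)=c_i a_{i-1}^*$ (when $d=n-2$) or $\sigma(a_i)=c_i e_i$ (when $d=n-1$), together with $\sigma(a_i^*)=\sigma(a_{i+1}^*)=0$, into \eqref{eq.sigrel}. The vanishing of the starred terms collapses \eqref{eq.sigrel} to $a_i^*\sigma(a_i)=\sigma(a_{i+1})(g\cdot a_{i+1}^*)$, and using $g\cdot a_{i+1}^*=\mu_{i+1}^* a_{i-1}^*$ both sides become scalar multiples of the single length-two monomial $a_i^* a_{i-1}^*$ (respectively $a_i^*$ when $d=n-1$). The essential check here is that $a_i^* a_{i-1}^*$ is nonzero in $\Pi_Q$: it is a product of two starred arrows, whereas each preprojective relation $\Omega_i$ only equates a starred-unstarred monomial with an unstarred-starred one, so such star-star monomials are not in the ideal. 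Comparing coefficients then yields the new recurrence $c_i=\mu_{i+1}^* c_{i+1}$, which crucially links \emph{consecutive} constants rather than the same-parity constants of \eqref{eq.rot_c}.

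I would then combine this descent recurrence with the quiver-Taft recurrence \eqref{eq.rot_c}: substituting the descent relation (applied once or twice, according to whether $d=n-1$ or $d=n-2$) into \eqref{eq.rot_c} forces, whenever the $c_i$ are not all zero, the relation $\mu_i\mu_i^*=\lambda\inv$ for every $i$. This is the analytic heart of the lemma. Substituting $\mu_i^*=(\lambda\mu_i)\inv$ into the telescoped descent recurrence then collapses the separate even- and odd-index formulas of Lemma \ref{lem.rot_sigma} into the single closed form $c_i=\lambda^i\mu_1\cdots\mu_i\,c_0$, which is the stated formula with $c=c_0$ (and $c=0$ handles the case where $\sigma$ vanishes on arrows). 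In particular, it is precisely the descent recurrence that pins $c_1$ to $c_0$ and so fuses the two parity cases into one constant.

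Finally, since all indices are read mod $n$, periodicity forces $c_n=c_0$, and applying this to the closed form gives $\lambda^n\mu_0\cdots\mu_{n-1}=1$; this reduces to the stated product relation $\mu_0\cdots\mu_{n-1}=1$ exactly when $r\mid n$, which holds since $r=\kappa=n/\gcd(n,d)$ divides $n$ in the inner-faithful rotation regime of Lemma \ref{lem.rot_gamma} (the starred statement is dual). For the order of $g$, the requirement $g^m=1$ gives $\kappa\mid m$ and $r\mid m$, hence $\lcm(r,n)\mid m$; computing the exact order of $g$ from the scalar $\mu_0\cdots\mu_{n-1}$ acting on an orbit, together with the hypothesis that $n$ or $d$ is odd, shows that no smaller exponent works, so $m=\lcm(r,n)$. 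I expect the principal obstacle to be the consistency step of the third paragraph: one must verify that the descent and quiver-Taft recurrences are genuinely independent (they constrain opposite index-parities), extract $\mu_i\mu_i^*=\lambda\inv$ cleanly, and merge the parity cases simultaneously, all while carefully tracking the wrap-around indexing and the nonvanishing of the relevant star-star (dually, unstarred-unstarred) monomials in $\Pi_Q$.
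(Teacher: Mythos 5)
Your proposal follows essentially the same route as the paper's proof: substitute the forms of $\sigma$ from Lemma \ref{lem.rot_sigma} into \eqref{eq.sigrel} to extract the consecutive-index recurrence $c_i=\mu_{i+1}^*c_{i+1}$, play it against the same-parity recurrence \eqref{eq.rot_c} to force $\mu_i\mu_i^*=\lambda\inv$ (propagated to all $i$ by \eqref{eq.mu_rel}), fuse the parity cases into one constant, and telescope around the cycle for the product relation and the value of $m$. The one caveat is your claim that the starred and unstarred halves ``decouple'': this fails for $n=3$ (where $d=n-2$ coincides with $d=1$ and $d=n-1$ with $d=2$) and for $n=4$, $d=2$, cases the paper treats inside this same proof by observing that the two monomial types occurring in \eqref{eq.sigrel} are linearly independent in $\Pi_Q$, so the same recurrence still falls out; with that observation (which is in the spirit of your nonvanishing check for $a_i^*a_{i-1}^*$) your argument goes through.
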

\begin{proof}
By Lemma \ref{lem.rot_sigma}, \eqref{eq.sigrel} is satisfied trivially when $2 < d < n-2$.

Suppose $n=3$ and $d=n-1=2$. Then, $\sigma(a_i)=c_ie_i$ and $\sigma(a_i^*)=c_i^*a_{i+1}^*$, and \eqref{eq.sigrel} becomes
\[  0=c_ia_i^*e_i+c_i^*\mu_ia_{i+1}a_{i+2}-c_{i+1}^*a_{i+1}a_{i+2}-c_{i+1}\mu_{i+1}^*e_{i+1}a_i^*.\]
Now, suppose $n\neq 3$ and $d=n-1$. Then, $\sigma(a_i)=c_ie_i$ but $\sigma(a_i^*)=0$, so \eqref{eq.sigrel} becomes
\begin{equation*}
    0 = a_i^*(c_ie_i) - \mu_{i+1}^* (c_{i+1}e_{i+1}) a_i^*
    = (c_i - \mu_{i+1}^* c_{i+1})a_i^*.
\end{equation*}
In either case, it follows that $c_i=\mu_{i+1}^*c_{i+1}$. From this and Lemma \ref{lem.rot_sigma}, $(\mu_1^*)\inv c_0 = c_1 = \lambda\mu_1c_0$, so $\mu_1\mu_1^* = \lambda\inv$. This now holds for all $i$ by \eqref{eq.mu_rel}.
The case $d=1$ is similar.

Suppose $n=4$ and $d=n-2=2$. Then, $\sigma(a_i)=c_ia_{i-1}^*$ and $\sigma(a_i^*)=c_i^*a_{i+1}$, and \eqref{eq.sigrel} becomes
\[ 0=c_ia_i^*a_{i-1}^*+c_i^*\mu_ia_{i+1}a_{i+2}-c_{i+1}^*a_{i+1}a_{i+2}-c_{i+1}\mu_{i+1}^*a_i^*a_{i-1}^*.\]
Now, suppose $n\neq 4$ and $d=n-2$. Then, $\sigma(a_i)=c_ia_{i-1}^*$ but $\sigma(a_i^*)=0$, so \eqref{eq.sigrel} becomes
\[ 0 = a_i^*(c_ia_{i-1}^*) - \mu_{i+1}^* (c_{i+1}ia_i^*) a_{i-1}^*
    = (c_i - \mu_{i+1}^* c_{i+1})a_i^*a_{i-1}^*.\]
In both cases, it follows that $c_i=\mu_{i+1}^*c_{i+1}$. From this and Lemma \ref{lem.rot_sigma},
\[ \lambda \mu_3(\mu_2^*)\inv c_1 = c_3 = (\mu_3^*\mu_2^*)\inv c_1,\]
so $\mu_3\mu_3^* = \lambda\inv$. This now holds for all $i$ by \eqref{eq.mu_rel}. The case $d=2$ is similar.

If $d=n-1$ or $d=n-2$, it follows that $\mu_0^*\mu_1^*\dotsc\mu_{n-1}^*=1$, and $\mu_i\mu_i^*=\lambda\inv$, so $\mu_0\mu_1\dotsc\mu_{n-1}=\lambda^{-n}=1$, since $r \mid n$. 
Similarly, if $d=1$ or $d=2$, $\mu_0^*\mu_1^*\dotsc\mu_{n-1}^*=\lambda^{-n}=1$ and $\mu_0\mu_1\dotsc\mu_{n-1}=1$. 
If $d$ or $n$ is odd, then $\kappa=n$, so $\mu_0\mu_1\dotsc\mu_{n-1}$ and $\mu_0^*\mu_1^*\dotsc\mu_{n-1}^*$ are $\left(\frac{m}{n}\right)\th$ roots of unity. Since $\lambda^{-n}$ is an $\left(\frac{r}{\gcd(r,n)}\right)\th$ root of unity, then $m=\lcm(r,n)$.
\end{proof}

The results of this section are summarized in the following theorem.

\begin{theorem}\label{thm.rot}
Fix $n > 2$. Suppose $\gtaft$ acts linearly and inner faithfully on $\kk\overline{Q}$ so that $g$ acts on $\kk\overline{Q}$ via a rotation automorphism with $g\cdot e_i=e_{i+d}$ for some 
$0 < d \leq n-1$, and this action descends to an action on $\Pi_Q$. Then, for each $i\in Q_0$, there exist scalars $\mu_i,\mu_i^*\in\kk^\times$ and $\gamma_i\in\kk$ so that
\begin{equation*} 
g\cdot a_i=\mu_ia_{i+d}, \quad g\cdot a_i^*=\mu_i^*a_{i+d}^*, \quad
x\cdot e_i = \gamma_i(e_i-\lambda\inv e_{i+d}), \quad \gamma_{i+d}=\lambda\inv\gamma_i.
\end{equation*}

(I) Suppose $\gamma_i=0$ for all $i\in Q_0$. 
Then $x \cdot a_i = \sigma(a_i)$ and $x \cdot a_i^* = \sigma(a_i^*)$ where
$\sigma(a_i),\sigma(a_i^*)$ are as in \eqref{eq.new_sigma}.

(II) Suppose there exists an $i\in Q_0$ so that $\gamma_i\neq 0$.
Then $\gamma_0 \neq 0$.

\begin{enumerate}
    \item \label{thmrot0} Case $2 < d < n-2$ and $n > 4$.
    If $i\equiv k \,\mod \tau$ for $0\leq k < \tau$, then
    \begin{align*}
        x\cdot a_i&=\lambda^{\frac{k-i}{d}}\xi_0\xi_1\dotsc\xi_{k-1}\gamma_0\left(\xi_k a_i-\lambda\inv\mu_i a_{i+d}\right),\\
        x\cdot a_i^*&=\lambda^{\frac{k-i}{d}}\xi_0\xi_1\dotsc\xi_{k-1}\gamma_0\left(a_i^*-\lambda^{z-1}\xi_k\mu_i\inv a_{i+d}^*\right),
    \end{align*}
    for some $z\in\ZZ$, $\gamma_0,\xi_0,\xi_1,\dotsc\xi_{d\tau-1}\in\kk^\times$ where the $\xi_i$ are $m\th$ roots of unity so that 
    \begin{equation*}
        (\xi_k\xi_{k+1}\dotsc\xi_{k+\tau-1})^\frac{rd}{n}=\lambda\inv\quad\text{and}\quad
        \mu_k\mu_{k+d}\dotsc\mu_{k+(r-1)d}=\xi_k^r.
    \end{equation*}
    This case also implies that $\gamma_i$ is nonzero.

    \item Case $d \in \{1,2,n-2,n-1\}$, then
    \begin{align*}
        x \cdot a_i &= \begin{cases}
            \lambda^{\mp i}\gamma_0(\lambda^{\mp 1} a_i-\lambda\inv\mu_ia_{i\pm 1})+ \sigma(a_i)
                & \text{if $d=n\pm 1$} \\
            \lambda^{\mp\frac{i}{2}} \gamma_0 (\xi_0 a_i - \lambda\inv \mu_i a_{i\pm 2}) + \sigma(a_i)
                & \text{if $d=n\pm2$, i\text{ is even}} \\
            \lambda^{\mp\frac{i-1}{2}}\gamma_0 (\lambda^{\mp 1} a_i - \lambda\inv\xi_0\mu_i a_{i\pm 2}) + \sigma(a_i) & \text{if $d=n\pm 2$, i\text{ is odd}}
        \end{cases} \\
        x \cdot a_i^* &= \begin{cases}
            \lambda^{\mp(i+1)}\gamma_0(\lambda^{\pm 1} a_i^*-\lambda\inv\mu_i^*a_{i-1}^*)+ \sigma(a_i^*)
                & \text{if $d=n\pm 1$} \\
            \lambda^{\mp \frac{i}{2}}\gamma_0 (a_i^* - \xi_0\lambda\inv \mu_i^* a_{i\pm 2}^*)+ \sigma(a_i^*)
                & \text{if $d=n\pm 2$, i\text{ is even}} \\
            \lambda^{\mp\frac{i+1}{2}} \gamma_0(\lambda^{\pm 1}\xi_0a_i^* - \lambda\inv\mu_i^* a_{i\pm2}^*)+ \sigma(a_i^*)
                & \text{if $d=n\pm2$, i\text{ is odd}}
        \end{cases}
    \end{align*}
    where $\sigma(a_i)$, $\sigma(a_i^*)$ are as in Lemma \ref{lem.rot_cmu}.    
\end{enumerate}
\end{theorem}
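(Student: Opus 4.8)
The plan is to assemble this theorem directly from the lemmas of this section, since each component of the displayed action has already been isolated; the proof is essentially a matter of substitution and bookkeeping. First I would record the coarse structure. The formulas $g\cdot a_i = \mu_i a_{i+d}$ and $g\cdot a_i^* = \mu_i^* a_{i+d}^*$ are exactly Lemma~\ref{lem.gact}(1), while $x\cdot e_i = \gamma_i(e_i - \lambda\inv e_{i+d})$ together with $\gamma_{i+d} = \lambda\inv\gamma_i$ is Proposition~\ref{prop.vertact} combined with \eqref{eq.gam_rel}. This yields the opening display with no further argument.

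For Case (I), where $\gamma_i = 0$ for every $i$, I would substitute into the $x$-action of Theorem~\ref{thm.quiveract}: since $x\cdot a_i = \gamma_{i+1}a_i - \gamma_i\lambda\inv(g\cdot a_i) + \sigma(a_i)$, the hypothesis collapses this to $x\cdot a_i = \sigma(a_i)$, and likewise $x\cdot a_i^* = \sigma(a_i^*)$. The explicit form of $\sigma$ is then exactly \eqref{eq.new_sigma} of Lemma~\ref{lem.rot_cmu}, which was derived purely from the descent conditions \eqref{eq.mu_rel}, \eqref{eq.sigrel} and did not use any nonvanishing of the $\gamma_i$. Hence Case (I) is immediate, with inner faithfulness forcing at least one of $c,c^*$ to be nonzero.

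For Case (II), assume some $\gamma_i\neq 0$. Away from the exceptional pair $(n,d)=(4,2)$, which is deferred to the end of the section, Lemma~\ref{lem.rot_gamma}\eqref{rot_gam2} gives $\gamma_i\neq 0$ for all $i$, in particular $\gamma_0\neq 0$. The two subcases are distinguished by whether $\sigma$ vanishes on arrows. In subcase (1), $2<d<n-2$, Lemma~\ref{lem.rot_sigma} forces $\sigma(a_i)=\sigma(a_i^*)=0$, so $x\cdot a_i = \gamma_{i+1}a_i - \gamma_i\lambda\inv\mu_i a_{i+d}$ and $x\cdot a_i^* = \gamma_i a_i^* - \gamma_{i+1}\lambda\inv\mu_i^* a_{i+d}^*$. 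I would then eliminate the $\gamma$'s and $\mu^*$'s in favor of $\gamma_0$, the roots of unity $\xi_k$, and the $\mu_i$: Lemma~\ref{lemma.rot_desc}\eqref{desc2} gives $\gamma_{i+1}=\xi_k\gamma_i$ for $k\equiv i\bmod\tau$, and iterating this against $\gamma_{i+d}=\lambda\inv\gamma_i$ expresses $\gamma_i = \lambda^{(k-i)/d}\xi_0\cdots\xi_{k-1}\gamma_0$, while $\mu_i^* = \lambda^z\mu_i\inv$ comes from Lemma~\ref{lemma.rot_desc}\eqref{desc1}. Substituting and factoring out the common scalar produces the stated formulas, and the constraints $\mu_k\mu_{k+d}\cdots\mu_{k+(r-1)d}=\xi_k^r$ and $(\xi_k\cdots\xi_{k+\tau-1})^{rd/n}=\lambda\inv$ are \eqref{eq.muprod-1root} together with the two relations of Lemma~\ref{lemma.rot_desc}\eqref{desc2}.

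Subcase (2), $d\in\{1,2,n-2,n-1\}$, is handled the same way, now retaining the explicit nonzero $\sigma$ supplied by Lemma~\ref{lem.rot_cmu} and the same expression for $\gamma_i$ in terms of $\gamma_0$. The main obstacle is purely bookkeeping: tracking the powers of $\lambda$ as one passes between the index shift $i\mapsto i+1$ (governed by the $\xi_k$) and $i\mapsto i+d$ (governed by $\lambda\inv$), eliminating redundant $\xi_k$ via relations such as $\xi_0\xi_1=\lambda$ when $\tau=2$, and organizing the even/odd-$i$ split and the $\pm$ conventions that let the four values $d=n\pm1$, $d=n\pm2$ be written uniformly. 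None of this is conceptually difficult, but the sign and exponent accounting must be carried out carefully, and the genuinely separate case $(n,d)=(4,2)$ must be excluded here and treated on its own.
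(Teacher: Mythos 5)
The paper gives no separate proof of this theorem — it is explicitly presented as a summary of the preceding lemmas — and your proposal reconstructs exactly that assembly: Lemma~\ref{lem.gact} and Proposition~\ref{prop.vertact} for the opening display, Theorem~\ref{thm.quiveract} plus Lemma~\ref{lem.rot_cmu} for Case (I), and Lemmas~\ref{lem.rot_sigma}, \ref{lem.rot_gamma}, and \ref{lemma.rot_desc} together with \eqref{eq.muprod-1root} for Case (II), with the $(n,d)=(4,2)$ exception correctly set aside. Your citations and substitutions all check out, so this matches the paper's intended argument.
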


This section concludes by considering the special case not explicitly covered by Theorem \ref{thm.rot}.

\begin{lemma}\label{lem.n4d2}
Suppose $n=4$ and $d=2=n-2$. Then, $\sigma^r(a)=0$ for all $a\in Q_1$ if and only if one of the following holds:
\begin{enumerate}
\item $\gamma_i=0$ for all $i\in Q_0$, or
\item $\gamma_i\neq 0$ for all $i\in Q_0$, $\mu_0\mu_1\mu_2\mu_3=\mu_0^*\mu_1^*\mu_2^*\mu_3^*=1$,  and $\gamma_1^2=\mu_0\mu_2\gamma_0^2$.
\end{enumerate}
\end{lemma}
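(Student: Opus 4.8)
The plan is to compute $\sigma^2$ explicitly and then read off the vanishing of $\sigma^r$ from equation \eqref{eq.gam_sig}, which holds identically for any action of the form in Theorem \ref{thm.quiveract}. Since $d=2=n-2$, Lemma \ref{lem.rot_sigma} (via \eqref{eq.rot_sigma0}) gives $\sigma(a_i)=c_ia_{i-1}^*$ and $\sigma(a_i^*)=c_i^*a_{i+1}$ for scalars $c_i,c_i^*$, so $\sigma$ interchanges the spans of the unstarred and starred arrows. A direct computation then shows $\sigma^2$ is diagonal: writing $\beta_i=c_ic_{i-1}^*$, one has $\sigma^2(a_i)=\beta_ia_i$ and $\sigma^2(a_i^*)=c_i^*c_{i+1}a_i^*=\beta_{i+1}a_i^*$. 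In particular the single scalar $\beta_{i+1}$ governs both $\sigma^2(a_i^*)$ and $\sigma^2(a_{i+1})$, a coincidence that will be the engine of the argument.

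First I would dispose of the degenerate possibilities. If $\gamma_i=0$ for all $i$, then \eqref{eq.gam_sig} gives $\sigma^r(a)=\gamma_{s(a)}^rg^r(a)-\gamma_{t(a)}^ra=0$ for every arrow, so (1) forces $\sigma^r=0$. Conversely, if some $\gamma_j\neq0$, then Proposition \ref{prop.vertact} forces $\#(G\cdot j)=r$, and since every vertex orbit has size $\kappa=n/\gcd(n,d)=2$, we get $r=2$; hence $\lambda=-1$ and $\gamma_{i+2}^2=\gamma_i^2$. This reduces the entire remaining analysis to $r=2$, where $g^2(a_i)=\mu_i\mu_{i+2}a_i$ and $g^2(a_i^*)=\mu_i^*\mu_{i+2}^*a_i^*$, so \eqref{eq.gam_sig} reads $\sigma^2(a_i)=(\gamma_i^2\mu_i\mu_{i+2}-\gamma_{i+1}^2)a_i$ and $\sigma^2(a_i^*)=(\gamma_{i+1}^2\mu_i^*\mu_{i+2}^*-\gamma_i^2)a_i^*$.

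The key step is to extract the relation linking the starred and unstarred parameters. Since $\beta_{i+1}$ equals both the coefficient of $\sigma^2(a_i^*)$ and that of $\sigma^2(a_{i+1})$, equating the two expressions and using $\gamma_{i+2}^2=\gamma_i^2$ yields $\gamma_{i+1}^2(\mu_i^*\mu_{i+2}^*-\mu_{i+1}\mu_{i+3})=0$; whenever $\gamma_{i+1}\neq0$ this gives $\mu_i^*\mu_{i+2}^*=\mu_{i+1}\mu_{i+3}$. With this identity the iff splits cleanly. For the forward direction, $\sigma^2=0$ forces $\gamma_i^2\mu_i\mu_{i+2}=\gamma_{i+1}^2$ for all $i$; a single nonzero $\gamma_j$ then propagates to all vertices, so $\gamma_i\neq0$ for all $i$, and combining the $i=0,1$ instances (with $\gamma_2^2=\gamma_0^2$) gives $\gamma_1^2=\mu_0\mu_2\gamma_0^2$ and $\mu_0\mu_1\mu_2\mu_3=1$, while the starred equations give $\mu_0^*\mu_1^*\mu_2^*\mu_3^*=1$. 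For the converse, assuming (2) we have $r=2$ and all $\gamma_i\neq0$, so the identity $\mu_i^*\mu_{i+2}^*=\mu_{i+1}\mu_{i+3}$ holds; the hypotheses $\gamma_1^2=\mu_0\mu_2\gamma_0^2$ and $\mu_0\mu_1\mu_2\mu_3=1$ verify all four unstarred conditions $\gamma_i^2\mu_i\mu_{i+2}=\gamma_{i+1}^2$, and the identity converts the starred conditions into the unstarred ones, so $\sigma^2=0$.

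The main obstacle I anticipate is precisely this starred-versus-unstarred bookkeeping: vanishing of $\sigma^2$ on the $a_i$ produces $\mu_0\mu_1\mu_2\mu_3=1$ and the $\gamma$-relation, but the conditions on the $a_i^*$ are not implied by these alone—they require $\mu_i^*\mu_{i+2}^*=\mu_{i+1}\mu_{i+3}$ (which, together with $\mu_0\mu_1\mu_2\mu_3=1$, amounts to $\mu_i\mu_i^*$ being a square root of unity). Recognizing that this identity is forced by the single scalar $\beta_{i+1}$ controlling both $\sigma^2(a_i^*)$ and $\sigma^2(a_{i+1})$ is what makes the three listed conditions in (2) sufficient, and it also explains why $\mu_0^*\mu_1^*\mu_2^*\mu_3^*=1$ is a consequence of $\mu_0\mu_1\mu_2\mu_3=1$ rather than an independent constraint.
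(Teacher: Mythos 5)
Your proof is correct, and its skeleton matches the paper's: dispose of the case $\gamma_i\equiv 0$ via \eqref{eq.gam_sig}; otherwise deduce $r=2$, $\lambda=-1$, $\gamma_{i+2}=-\gamma_i$; and read off the vanishing of $\sigma^2$ from the coefficients $\gamma_i^2\mu_i\mu_{i+2}-\gamma_{i+1}^2$ and $\gamma_{i+1}^2\mu_i^*\mu_{i+2}^*-\gamma_i^2$ supplied by \eqref{eq.gam_sig}. Where you genuinely diverge is in how you reconcile the starred conditions with the three conditions actually listed in item (2). The paper gets there by importing the descent relations from Lemma \ref{lem.rot_cmu} (namely $c_i=\mu_{i+1}^*c_{i+1}$, $c_{i+1}^*=\mu_i c_i^*$ and $\mu_i\mu_i^*=\lambda\inv$, which come from \eqref{eq.sigrel} and \eqref{eq.mu_rel}, i.e., from the standing assumption that the action passes to $\Pi_Q$), and uses these to rewrite all four $\sigma^2$-coefficients in terms of $c_0c_1^*$ and $c_0^*c_1$. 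You instead observe that $\sigma^2(a_i^*)$ and $\sigma^2(a_{i+1})$ share the single scalar $c_{i+1}c_i^*$, so that \eqref{eq.gam_sig} alone forces $\gamma_{i+1}^2(\mu_{i+1}\mu_{i+3}-\mu_i^*\mu_{i+2}^*)=0$ \emph{whether or not} $\sigma^2$ vanishes; this bridge identity is exactly what converts the starred conditions into the unstarred ones and explains why $\mu_0^*\mu_1^*\mu_2^*\mu_3^*=1$ and $\mu_0\mu_2=\mu_1^*\mu_3^*$ are not independent constraints. The payoff of your route is that the equivalence is established at the level of $\kk\overline{Q}$, without invoking the quotient to $\Pi_Q$ at all, which matches the literal statement of the lemma more closely; the paper's route has the advantage of feeding directly into Proposition \ref{prop.n4d2}, where the descent relations are needed anyway. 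The only blemish is the parenthetical claim that your identity plus $\mu_0\mu_1\mu_2\mu_3=1$ makes $\mu_i\mu_i^*$ a square root of unity: that additionally uses the constancy of $\mu_i\mu_i^*$ from \eqref{eq.mu_rel}, hence does require descent, but nothing in your main argument depends on it.
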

\begin{proof}
If $\gamma_i=0$ for all $i\in Q_0$, it follows immediately from \eqref{eq.gam_sig} that $\sigma^r(a)=0$ for all $a\in Q_1$. 

If there exists a $j\in Q_0$ so that $\gamma_j\neq 0$, then it must be the case that $r=2$ and $\lambda=-1$. Therefore, $\gamma_0=-\gamma_2$ and $\gamma_1=-\gamma_3$. From \eqref{eq.gam_sig} and Lemma \ref{lem.rot_cmu},
\begin{equation*}
\sigma^2(a_0)=-\frac{\mu_2}{\mu_1^*}c_0c_1^*a_0=(\gamma_0^2\mu_0\mu_2-\gamma_1^2)a_0 \quad \text{and} \quad \sigma^2(a_1^*)=(\gamma_0^2\mu_1^*\mu_3^*-\gamma_1^2)a_1^* = -\frac{\mu_2}{\mu_1^*}c_0c_1^* a_1^*,
\end{equation*}
and 
\begin{equation*}
\sigma^2(a_1)=c_0^*c_1a_1=(\gamma_1^2\mu_1\mu_3-\gamma_0^2)a_1 \quad \text{and} \quad \sigma^2(a_0^*)=(\gamma_1^2\mu_0^*\mu_2^*-\gamma_0^2)a_0^* = c_0^*c_1 a_0^*.
\end{equation*}
It is clear from the equations above that to have $\sigma^2(a)=0$ for all $a\in Q_1$ if and only if $\gamma_0\neq 0$, $\gamma_1\neq 0$, $\gamma_0^2=\mu_1\mu_3\gamma_1^2=\mu_0^*\mu_2^*\gamma_1^2$, and $\gamma_1^2=\mu_0\mu_2\gamma_0^2=\mu_1^*\mu_3^*\gamma_0^2$. The result follows.
\end{proof}

\begin{proposition}\label{prop.n4d2}
Suppose $n=4$ and $d=2=n-2$. Suppose $\gtaft$ acts linearly and inner faithfully on $\kk\overline{Q}$ so that $g\cdot e_i=e_{i+2}$, and this action descends to an action on $\Pi_Q$. 

(I) Suppose $\sigma^r(a)=0$ for all $a \in Q_1$. Then $\sigma^2(a)=0$ for all $a \in Q_1$ and the actions are as described in Theorem \ref{thm.rot} with $c=0$ or $c^*=0$.

(II) Suppose $\sigma^2(a_0) \neq 0$. Then $r=2$ and for each $i\in Q_0$ there exist scalars $\gamma_i,\mu_i,\mu_i^*,c,c^*\in\kk^\times$ so that 
\begin{align*}
g\cdot a_i=\mu_ia_{i+2}, \quad g\cdot a_i^*=\mu_i^*a_{i+2}^*, \quad
x\cdot e_i = \gamma_i(e_i+e_{i+2}), \quad \gamma_{i+2}=-\gamma_i,
\end{align*}
and
\begin{align*}
x \cdot a_i &= \gamma_{i+1}a_i + \gamma_i g \cdot a_i + (-1)^i\mu_1\mu_2\dotsc\mu_ica_{i-1}^*, \\
x \cdot a_i^* &= \gamma_ia_i^* + \gamma_{i+1} g \cdot a_i^* + (-1)^i (\mu_0^*\mu_1^*\dotsc\mu_{i-1}^*)\inv c^*a_{i+1}.
\end{align*}
Moreover, these scalars satisfy 
$\mu_0\mu_1\mu_2\mu_3=\mu_0^*\mu_1^*\mu_2^*\mu_3^*=1$, $\mu_i\mu_i^*=-1$, $\mu_0\mu_2 = \mu_1^*\mu_3^*$, and $\gamma_1^2 = \gamma_0^2\mu_0\mu_2 - \mu_3\inv cc^*$.
\end{proposition}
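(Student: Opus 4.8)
The proof splits along the two hypotheses, and the engine for both parts is the very special shape $\sigma$ is forced into by Lemma \ref{lem.rot_sigma} when $n=4$ and $d=2=n-2$: here \emph{both} families are simultaneously active, $\sigma(a_i)=c_ia_{i-1}^*$ and $\sigma(a_i^*)=c_i^*a_{i+1}$, so $\sigma$ permutes the eight arrows of $\overline Q$ in four disjoint transpositions $\{a_i,\,a_{i-1}^*\}$. Consequently $\sigma^2$ is \emph{diagonal}, $\sigma^2(a_i)=c_ic_{i-1}^*a_i$ and likewise on the starred arrows, and after substituting the closed forms of Lemma \ref{lem.rot_cmu} each of the four scalars is a unit multiple of $cc^*$. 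This diagonality is the structural fact I would exploit throughout.

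For (I), the plan is to note that a diagonal operator is nilpotent only if every diagonal entry vanishes: if some $c_ic_{i-1}^*\neq 0$ then $\sigma^{2k}(a_i)=(c_ic_{i-1}^*)^k a_i\neq 0$ for all $k$, so $\sigma^r(a_i)\neq 0$. Hence $\sigma^r=0$ on all arrows forces $\sigma^2=0$, i.e. $c_ic_{i-1}^*=0$ for every $i$. Since each $c_i$ is a unit times $c$ and each $c_i^*$ a unit times $c^*$, this is equivalent to $cc^*=0$, that is $c=0$ or $c^*=0$. In either case only one of the families $\sigma(a_i),\sigma(a_i^*)$ survives, so the action is precisely the $d=n-2$ (respectively $d=2$) action recorded in Theorem \ref{thm.rot}, proving (I).

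For (II), assume $\sigma^2(a_0)=c_0c_3^*a_0\neq 0$, so $c,c^*\in\kk^\times$. I would first pin down $r$: if every $\gamma_i=0$ then \eqref{eq.gam_sig} gives $\sigma^r(a_0)=0$, contradicting that the diagonal entry $c_0c_3^*$ is a unit; hence some $\gamma_j\neq 0$, and the identity $r=\kappa$ of Lemma \ref{lem.rot_gamma} (which is stated without the $n\neq 4$ exclusion) yields $r=\kappa=2$ and $\lambda=-1$. The $x$-action formulas are then immediate from the section's parametrization with $\lambda\inv=-1$ together with the closed forms of $\sigma$ from Lemma \ref{lem.rot_cmu}. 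Because $c,c^*\neq 0$, that lemma also supplies $\mu_i\mu_i^*=-1$ and $\mu_0\mu_1\mu_2\mu_3=\mu_0^*\mu_1^*\mu_2^*\mu_3^*=1$, and combining these gives $\mu_0\mu_2=(\mu_1\mu_3)\inv=\mu_1^*\mu_3^*$. Finally the identity $\gamma_1^2=\gamma_0^2\mu_0\mu_2-\mu_3\inv cc^*$ comes from equating the two computations of $\sigma^2(a_0)$: the diagonal value $c_0c_3^*a_0$, which the $\mu$-relations rewrite as $\mu_3\inv cc^*a_0$, against $\gamma_0^2g^2(a_0)-\gamma_1^2a_0=(\gamma_0^2\mu_0\mu_2-\gamma_1^2)a_0$ from \eqref{eq.gam_sig}.

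The step I expect to be the main obstacle is the assertion that every $\gamma_i$ is a unit. Writing out \eqref{eq.gam_sig} for the four orbit representatives $a_0,a_1,a_0^*,a_1^*$ and reducing by the $\mu$-relations just derived, all four equations collapse to the single relation above; they therefore constrain only the combination $\gamma_0^2\mu_0\mu_2-\gamma_1^2$, which the hypothesis $cc^*\neq 0$ forces to be nonzero, and not $\gamma_0,\gamma_1$ individually. Since the descent relation \eqref{eq.sigrel} involves the $\mu_i,c_i,c_i^*$ but not the $\gamma_i$, it adds no further constraint, so the nonvanishing of each $\gamma_i$ is exactly the delicate point and cannot follow from these relations alone. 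I would attack it by combining the nonvanishing of the right-hand side $\mu_3\inv cc^*$ with the inner-faithfulness hypothesis, and I would carefully check whether any genuinely degenerate sub-case (such as $\gamma_0=0$) survives, since this is where the argument must be made with care.
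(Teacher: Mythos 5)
Your parts (I) and most of (II) track the paper's own argument very closely. The paper also exploits the diagonality $\sigma^2(a_i)=c_ic_{i-1}^*a_i$ together with the descent recursions $c_i=\mu_{i+1}^*c_{i+1}$ and $c_{i+1}^*=\mu_ic_i^*$ to reduce ``$\sigma^r=0$'' to ``all $c_i=0$ or all $c_i^*=0$'' in (I); and in (II) it gets $r=2$ from the existence of some $\gamma_j\neq 0$, imports $\mu_i\mu_i^*=-1$ and $\mu_0\mu_1\mu_2\mu_3=\mu_0^*\mu_1^*\mu_2^*\mu_3^*=1$ from Lemma \ref{lem.rot_cmu}, and obtains $\gamma_1^2=\gamma_0^2\mu_0\mu_2-\mu_3\inv cc^*$ exactly as you propose, by equating $c_0c_3^*=\mu_3\inv cc^*$ with $\gamma_0^2\mu_0\mu_2-\gamma_1^2$ from \eqref{eq.gam_sig}. (Your derivation of $\mu_0\mu_2=\mu_1^*\mu_3^*$ directly from $\mu_i\mu_i^*=-1$ and $\mu_0\mu_1\mu_2\mu_3=1$ is actually cleaner than the paper's, which compares $c_0c_3^*$ with $c_1^*c_2$ and uses $\gamma_0\neq 0$.)

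The genuine gap is the one you flag yourself and then leave unresolved: the assertion that every $\gamma_i$ lies in $\kk^\times$, which is part of the statement being proved. The paper closes this step by invoking Lemma \ref{lem.n4d2}, whose biconditional characterizes when $\sigma^r=0$; in case (II) one is negating ``(1) or (2)''. Your suspicion about this point is well founded: negating (2) yields only a disjunction, and, as you observe, the four instances of \eqref{eq.gam_sig} collapse (modulo $\mu_0\mu_1\mu_2\mu_3=1$ and $\mu_i\mu_i^*=-1$) to the single relation $\gamma_1^2=\gamma_0^2\mu_0\mu_2-\mu_3\inv cc^*$, which is satisfied by the configuration $\gamma_0\neq 0$, $\gamma_1=\gamma_3=0$, $cc^*=\mu_1\inv\gamma_0^2$. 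Inner faithfulness cannot rescue this, since $x\cdot a_0\neq 0$ already holds there, and the descent relations \eqref{eq.mu_rel}, \eqref{eq.sigrel} involve no $\gamma_i$; so your stated plan of ``combining $cc^*\neq 0$ with inner faithfulness'' will not close the gap. To finish you must either produce a constraint beyond those you list that kills the subcase $\gamma_1=0$, or reproduce and carefully justify the paper's appeal to Lemma \ref{lem.n4d2}. As written, your proposal establishes everything in the proposition except the nonvanishing of all the $\gamma_i$, and that step remains an acknowledged obstacle, so the proof is incomplete.
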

\begin{proof}
(I) Suppose $\sigma^r(a)=0$ for all $a \in Q_1$. 
By \eqref{eq.rot_sigma1}, either $c_i=0$ for some $i$ or $c_i^*=0$ for some $i$. 
But $c_i=\mu_{i+1}^* c_{i+1}$ and $c_i^* = \mu_i\inv c_{i+1}^*$ imply that $c_i=0$ for all $i$ or $c_i^*=0$ for all $i$, respectively. Now, in either case,
$\sigma^2(a_0) = c_0\sigma(a_3^*) = c_0c_3^*a_0 = 0$.

(II) Now assume $\sigma^2\neq 0$. By Lemma \ref{lem.n4d2}, $\gamma_i \neq 0$ for all $i$ and $r=2$. By a similar argument as in case (I), $c_i=0$ or $c_i^*=0$ for any $i$ implies that $\sigma^2(a)=0$ for all $a \in Q_1$.  Thus, $c_i,c_i^*$ are all nonzero. Set $c=c_0$, $c^*=c_0^*$. By Lemma \ref{lem.rot_cmu}, $\mu_0\mu_1\mu_2\mu_3=\mu_0^*\mu_1^*\mu_2^*\mu_3^*=1$
and $\mu_i\mu_i^*=-1$. Then
\begin{align}
\label{eq.n4d2-1}    c_ic_{i-1}^* a_i &= \sigma^2(a_i) = (\gamma_i^2 \mu_i\mu_{i+2} - \gamma_{i+1}^2) a_i \\
\label{eq.n4d2-2}   c_i^*c_{i+1} a_i^* &= \sigma^2(a_i^*) = (\gamma_{i+1}^2 \mu_i^*\mu_{i+2}^* - \gamma_i^2) a_i^*.
\end{align}
Since $\gamma_{i+2}^2=\gamma_i^2$, then 
setting $i=0$ in \eqref{eq.n4d2-1} and $i=1$ in \eqref{eq.n4d2-2} gives
\[
\gamma_0^2 \mu_1^*\mu_3^* - \gamma_1^2 
    = c_1^*c_2
    = (\mu_1\inv\mu_2\inv)(\mu_3^*\mu_0^*) c_0c_3^*
    = (\mu_1\inv\mu_2\inv)(\mu_3\inv\mu_0\inv) (\gamma_0^2 \mu_0\mu_2 - \gamma_1^2)
    = \gamma_0^2 \mu_0\mu_2 - \gamma_1^2.
\]
Thus, $\mu_0\mu_2=\mu_1^*\mu_3^*$. Similarly, $\mu_1\mu_3=\mu_0^*\mu_2^*$. 
The cases $i=0,2$ of \eqref{eq.n4d2-1} are equivalent since $\gamma_i = - \gamma_{i+2}$ and
\[ c_2c_1^* 
    = \mu_3^*\mu_0^*\mu_1\inv \mu_2\inv c_0c_3^*
    = \mu_3^*\mu_0^*\mu_1^* \mu_2^* c_0c_3^*
    = c_0c_3^*.\]
Similarly, the cases $i=1,3$ are equivalent. Finally, case $i=0$ of \eqref{eq.n4d2-1} gives
\[    \gamma_1^2 = \gamma_0^2 \mu_0\mu_2 - \mu_3\inv cc^*.\]
Substituting into case $i=1$ gives
\begin{align*}
\gamma_1^2 \mu_1\mu_3 - \gamma_0^2 - c_1c_0^*
    &= (\gamma_0^2 \mu_0\mu_2 - \mu_3\inv cc^*)\mu_1\mu_3 - \gamma_0^2 - (\mu_2^*\mu_3^*\mu_0^* cc^*) \\
    &= \mu_1 cc^* - (\mu_1^*)\inv cc^* = 0.
\end{align*}
The last equality follows because $\mu_i\mu_i^* = -1$ for all $i$.
An analogous argument holds for \eqref{eq.n4d2-2}.
\end{proof}

\subsection{Reflection actions on the preprojective algebra}
\label{sec.reflection}

Throughout this section, assume that $\gtaft$ acts linearly and inner faithfully on $\kk\overline{Q}$ and the corresponding preprojective algebra $\Pi_Q$, and that $g$ acts as a reflection according to Lemma \ref{lem.gact}. Thus, there exists $d \in \ZZ$, $0\leq d \leq n-1$,
and scalars $\mu_i,\mu_i^* \in \kk^\times$ such that for all for all $i \in Q_0$,
\[  g\cdot e_i = e_{n-(d+i)}, \qquad 
    g\cdot a_i = \mu_i a_{n-(d+i+1)}^*, \qquad 
    g\cdot a_i^* = \mu_i^* a_{n-(d+i+1)}.\]

\begin{remark}
Throughout, $j$ denotes a vertex such that the axis of reflection passes through $j$, so $g \cdot j=j$, i.e., $n-(d+j)=j$.
Similarly, $k$ denotes a vertex such that the axis of reflection passes between $k$ and $k+1$, so $g \cdot k = k+1$ and $g \cdot (k+1) = k$, i.e., $n-(d+k)=k+1$. The following lists all cases for the 
position of the axis of reflection:
\begin{enumerate}
\item If $n$ and $d$ are even, then $j=n-\frac{d}{2}$ and $j=\frac{n-d}{2}$.
\item If $n$ and $d$ are odd, then $j=\frac{n-d}{2}$ and $k=n-\frac{d+1}{2}$.
\item  If $d$ is odd and $n$ is even, $k=n-\frac{d+1}{2}$ and $k=\frac{n-d-1}{2}$.
\item If $n$ is odd and $d$ is even, then $j=n-\frac{d}{2}$ and $k=\frac{n-d-1}{2}$.
\end{enumerate}
\end{remark}

\begin{lemma}\label{lem.refl_sigma}
Suppose $\gtaft$ acts linearly and inner faithfully on $\kk\overline{Q}$ so that $g$ acts on $\kk\overline{Q}$ via a reflection automorphism with $g\cdot e_i=e_{n-(d+i)}$ for some $0< d \leq n-1$. Let $\sigma: \kk Q_0 \oplus \kk Q_1 \to \kk Q_0 \oplus \kk Q_1$ be the quiver-Taft map corresponding to this Taft action.
\begin{enumerate}
\item Suppose that the axis of reflection passes through the vertex $j$. 
Then there exists $c_j,c_j^* \in \kk$ such that
\[ \sigma(a_{j-1}) = - c_j^* a_{j-1}, \quad
\sigma(a_{j-1}^*) = - \mu_j\inv \mu_{j-1}^* c_j a_j, \quad
\sigma(a_j) = c_j a_{j-1}^*, \quad
\sigma(a_j^*) = c_j^*a_{j}^*.\]

\item Suppose that the axis of reflection passes between the vertices $k$ and $k+1$. Then there exists $c_k \in \kk$ such that
\[ \sigma(a_k)=c_k e_k, \quad \sigma(a_k^*)= \lambda \mu_k^* c_k e_{k+1}.\]
If $c_k \neq 0$, then $\lambda^2\mu_k\mu_k^*=1$.

\item If $i \neq j-1,j,k$, then $\sigma(a_i)=\sigma(a_i^*)=0$.
\end{enumerate}
\end{lemma}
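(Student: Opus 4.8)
The plan is to extract everything from the two structural axioms \ref{sig2} and \ref{sig3} defining a quiver-Taft map: \ref{sig2} pins down the \emph{support} of $\sigma$ on each arrow, while \ref{sig3} produces the scalar relations and, through its two-sidedness, the consistency constraints. First I would determine the supports. For any $b \in \overline{Q}_1$, property \ref{sig2} forces $\sigma(b) = e_{s(b)}\sigma(b)e_{g\cdot t(b)}$, so $\sigma(b)$ is a combination of the length-$\le 1$ paths from $s(b)$ to $g\cdot t(b)$; since $\overline{Q}$ has no loops and at most one arrow in each direction between distinct vertices, there is at most one such path. For $b=a_i$ one has $s(a_i)=i$ and $g\cdot t(a_i)=e_{n-(d+i+1)}$, and for $b=a_i^*$ one has $s(a_i^*)=i+1$ and $g\cdot t(a_i^*)=e_{n-(d+i)}$. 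Reading off (mod $n$) when source and target coincide or are joined by a single starred or unstarred arrow shows $\sigma(a_i)=\sigma(a_i^*)=0$ unless $i\in\{j-1,j\}$ for a fixed vertex $j$ (where $2j\equiv n-d$) or $i=k$ for a swapped pair (where $2k\equiv n-d-1$). This proves (3), and in the remaining cases gives the monomial forms $\sigma(a_{j-1})=\alpha a_{j-1}$, $\sigma(a_j)=c_j a_{j-1}^*$, $\sigma(a_{j-1}^*)=\delta a_j$, $\sigma(a_j^*)=\beta a_j^*$ for (1), and $\sigma(a_k)=c_k e_k$, $\sigma(a_k^*)=\rho e_{k+1}$ for (2).

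Next I would feed these into \ref{sig3}. The reflection permutes the relevant arrows in pairs: $g\cdot a_{j-1}=\mu_{j-1}a_j^*$ and $g\cdot a_j^*=\mu_j^* a_{j-1}$, likewise $g\cdot a_j=\mu_j a_{j-1}^*$, $g\cdot a_{j-1}^*=\mu_{j-1}^* a_j$, and $g\cdot a_k=\mu_k a_k^*$, $g\cdot a_k^*=\mu_k^* a_k$. Applying \ref{sig3} to \emph{both} members of a pair yields two scalar equations. For the pair $(a_{j-1},a_j^*)$ these read $\beta=\lambda\inv\alpha$ and $\alpha=\lambda\inv\beta$; eliminating $\beta$ gives $(\lambda^2-1)\alpha=0$, so either $\alpha=\beta=0$ or $\lambda=-1$, and in \emph{both} cases $\alpha=-\beta$ (trivially when the coefficients vanish, and because $\lambda\inv=-1$ otherwise). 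Setting $c_j^*\defeq\beta$ yields $\sigma(a_{j-1})=-c_j^* a_{j-1}$ and $\sigma(a_j^*)=c_j^* a_j^*$. The pair $(a_j,a_{j-1}^*)$ is handled identically: the two equations force $\delta=\lambda\inv\mu_j\inv\mu_{j-1}^* c_j$ and $\delta=\lambda\mu_j\inv\mu_{j-1}^* c_j$, whence the same dichotomy gives $\delta=-\mu_j\inv\mu_{j-1}^* c_j$, completing (1).

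For (2), applying \ref{sig3} to $a_k$ gives $\rho=\lambda\inv\mu_k\inv c_k$, while applying it to $a_k^*$ gives $\rho=\lambda\mu_k^* c_k$. The lemma records the second expression, so $\sigma(a_k^*)=\lambda\mu_k^* c_k e_{k+1}$; equating the two expressions for $\rho$ shows that whenever $c_k\neq 0$ one must have $\lambda\inv\mu_k\inv=\lambda\mu_k^*$, i.e. $\lambda^2\mu_k\mu_k^*=1$.

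I expect the main obstacle to be the bookkeeping of the reflection action on arrows—keeping the index $n-(d+i+1)$ and the scalars $\mu_i,\mu_i^*$ straight under the mod-$n$ conventions—and, relatedly, correctly enumerating the nonzero cases in the support step. The conceptually delicate point is the over-determination in the second step: each scalar is computed twice, once from each arrow in a $g$-orbit, and it is precisely the required agreement of the two computations that forces the sign in (1) and the identity $\lambda^2\mu_k\mu_k^*=1$ in (2); some care is needed to see that the ``$-$'' in (1) holds uniformly across the vanishing and the $\lambda=-1$ cases.
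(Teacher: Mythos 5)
Your proposal is correct and follows essentially the same route as the paper: use \ref{sig2} to pin down the support of $\sigma$ on each arrow (yielding part (3) and the monomial forms), then apply \ref{sig3} to each arrow in a $g$-orbit pair to get the over-determined scalar relations that force either vanishing or $\lambda=-1$ (resp.\ $\lambda^2\mu_k\mu_k^*=1$) and hence the stated signs. The paper organizes the four relations at a fixed vertex $j$ in one block rather than pairing them by $g$-orbit, but the computation and conclusions are identical.
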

\begin{proof}
By \ref{sig2},
\begin{align*}
\sigma(a_i) 
    &= e_{s(a_i)} \sigma(a_i) e_{g \cdot t(a_i)}
    = e_i \sigma(a_i) e_{n-(d+i+1)} \\
\sigma(a_i^*) 
    &= e_{s(a_i^*)} \sigma(a_i^*) e_{g \cdot t(a_i^*)}
    = e_{i+1} \sigma(a_i^*) e_{n-(d+i)}.
\end{align*}
Thus, if $\sigma(a_i) \neq 0$, then $i-1\leq n-d-i-1\leq i+1$, which means that $\frac{n-d}{2}-1\leq i \leq \frac{n-d}{2}$.
Similarly, if $\sigma(a_i^*)\neq 0$, then $i\leq n-d-i\leq i+2$, which also means that $\frac{n-d}{2}-1\leq i \leq \frac{n-d}{2}$.

Hence, if $j$ is a fixed point,
then there exists $c_{j-1}, c_{j-1}^*, c_j, c_j^* \in \kk$ such that
\[ \sigma(a_{j-1}) = c_{j-1}a_{j-1}, \quad
\sigma(a_{j-1}^*) = c_{j-1}^*a_j, \quad
\sigma(a_j) = c_j a_{j-1}^*, \quad
\sigma(a_j^*) = c_j^*a_{j}^*.\]
By \ref{sig3}, these coefficients satisfy
\begin{align*}
\mu_{j-1} c_j^* a_j^*
    &= \mu_{j-1}\sigma(a_j^*) 
    = \sigma(g\cdot a_{j-1})
    = \lambda\inv g\cdot\sigma(a_{j-1}) 
    = \lambda\inv c_{j-1}(g\cdot a_{j-1})
    = \lambda\inv c_{j-1}\mu_{j-1}a_j^* \\
\mu_{j-1}^*c_j a_{j-1}^*
    &= \mu_{j-1}^*\sigma(a_{j})
    = \sigma(g\cdot a_{j-1}^*)
    = \lambda\inv g\cdot\sigma(a_{j-1}^*)
    = \lambda\inv c_{j-1}^* (g\cdot a_j)
    = \lambda\inv c_{j-1}^*\mu_j a_{j-1}^* \\
\mu_j c_{j-1}^* a_j
    &= \mu_j \sigma(a_{j-1}^*)
    = \sigma(g \cdot a_j)
    = \lambda\inv g \cdot \sigma(a_j)
    = \lambda\inv c_j (g \cdot a_{j-1}^*)
    = \lambda\inv \mu_{j-1}^* c_j a_j \\
\mu_j^* c_{j-1} a_{j-1}
    &= \mu_j^* \sigma(a_{j-1})
    = \sigma(g \cdot a_j^*)
    = \lambda\inv g \cdot \sigma(a_j^*)
    = \lambda\inv c_j^* (g \cdot a_j^*)
    = \lambda\inv \mu_j^* c_j^* a_{j-1}.
\end{align*}
So, $c_{j-1}=\lambda c_j^* = \lambda\inv c_j^*$ and 
$c_{j-1}^* = \lambda \mu_j\inv \mu_{j-1}^*c_j = \lambda\inv \mu_j\inv \mu_{j-1}^*c_j$.
Hence, if either $c_j$ or $c_j^*$ are nonzero, then $\lambda^2=1$.
The result follows.

If $k$ and $k+1$ are swapped, then there exists $c_k,c_k^* \in \kk$ such that
\[ 
\sigma(a_k)=c_k e_k, \quad 
\sigma(a_k^*)=c_k^* e_{k+1}.
\]
Again, by \ref{sig3},
\begin{align*}
\mu_k c_k^* e_{k+1}
    &= \sigma(\mu_k a_k^*)
    = \sigma(g \cdot a_k) 
    = \lambda\inv g \cdot \sigma(a_k)
    = \lambda\inv g \cdot (c_k e_k)
    = \lambda\inv c_k e_{k+1} \\
\mu_k^* c_k e_k
    &= \sigma(\mu_k^* a_k) 
    = \sigma(g \cdot a_k^*)
    = \lambda\inv g \cdot \sigma(a_k^*) 
    = \lambda\inv g \cdot (c_k^* e_{k+1})
    = \lambda\inv c_k^* e_k.
\end{align*}
Thus, $c_k^* = (\lambda\mu_k)\inv c_k = \lambda\mu_k^* c_k$.
If follows that if $c_k \neq 0$, then $\lambda^2\mu_k\mu_k^*=1$.
\end{proof}

The coefficients $c_j,c_j^*$ intertwine with the $\gamma_i$ as explained in the next lemma.

\begin{lemma}\label{lem.refl_gam}
Suppose $\gtaft$ acts linearly and inner faithfully on $\kk\overline{Q}$ so that $g$ acts on $\kk\overline{Q}$ via a reflection automorphism with $g\cdot e_i=e_{n-(d+i)}$ for some $0< d \leq n-1$.
Suppose that the axis of reflection passes through the vertex $j$. Then 
\[
    c_j^2 = \mu_j(\mu_{j-1}^*)\inv \gamma_{j-1}^2   \qquad
    (c_j^*)^2 = \mu_j^*\mu_{j-1}\gamma_{j-1}^2.
\]
If $i \neq j-1,j$, then
\[
\gamma_{i+1}^2 
	= (\mu_i \mu_{n-(d+i+1)}^*) \gamma_i^2 
	= (\mu_i^* \mu_{n-(d+i+1)})\inv \gamma_i^2.
\]
Consequently, either $\gamma_i=0$ for all $i$, or $\gamma_i\neq 0$ for all $i$ with $g \cdot i \neq i$.
\end{lemma}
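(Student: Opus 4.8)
The plan is to read off every identity from the defining relation \eqref{eq.gam_sig}, namely $\gamma_{s(a)}^r g^r(a)-\gamma_{t(a)}^r a=\sigma^r(a)$, after first reducing to the case $r=2$. Because $g$ is a reflection, $g^2$ fixes every vertex and every non-fixed vertex has $G$-orbit of size $2$. Hence if $\gamma_i\neq 0$ for some $i$, then $i$ is non-fixed (fixed vertices carry $\gamma=0$ by Corollary \ref{cor.fixed}) and Proposition \ref{prop.vertact} forces $r=\#(G\cdot i)=2$, so $\lambda=-1$. If instead $r>2$, then $\lambda^2\neq 1$, so Lemma \ref{lem.refl_sigma}(1) gives $c_j=c_j^*=0$ while the same orbit-size count gives $\gamma_i=0$ for all $i$; every asserted identity then reduces to $0=0$. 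So I may assume $r=2$ throughout.

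Next I would record the action of $g^2$ on arrows by an index chase: iterating $g\cdot a_i=\mu_i a_{n-(d+i+1)}^*$ and $g\cdot a_i^*=\mu_i^* a_{n-(d+i+1)}$ gives $g^2(a_i)=\mu_i\mu_{n-(d+i+1)}^* a_i$ and $g^2(a_i^*)=\mu_i^*\mu_{n-(d+i+1)}a_i^*$, so in particular $g^2(a_{j-1})=\mu_{j-1}\mu_j^* a_{j-1}$ and $g^2(a_{j-1}^*)=\mu_j\mu_{j-1}^* a_{j-1}^*$ at the axis. Using Lemma \ref{lem.refl_sigma} I would then compute $\sigma^2$: for $i\neq j-1,j$ either $\sigma(a_i)=0$ or, at a swap vertex, $\sigma(a_i)\in\kk Q_0$ is annihilated by the next application of $\sigma$ via \ref{sig1}, so $\sigma^2(a_i)=\sigma^2(a_i^*)=0$; composing the four formulas of Lemma \ref{lem.refl_sigma}(1) at the axis yields $\sigma^2(a_{j-1})=(c_j^*)^2 a_{j-1}$ and $\sigma^2(a_{j-1}^*)=-\mu_j\inv\mu_{j-1}^* c_j^2 a_{j-1}^*$.

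Substituting these into \eqref{eq.gam_sig} with $r=2$ and $\gamma_j=0$ then produces everything at once: the $a_{j-1}$ equation gives $(c_j^*)^2=\mu_j^*\mu_{j-1}\gamma_{j-1}^2$, the $a_{j-1}^*$ equation gives $c_j^2=\mu_j(\mu_{j-1}^*)\inv\gamma_{j-1}^2$, and for each $i\neq j-1,j$ the $a_i$ and $a_i^*$ equations give $\gamma_{i+1}^2=\mu_i\mu_{n-(d+i+1)}^*\gamma_i^2$ and $\gamma_{i+1}^2=(\mu_i^*\mu_{n-(d+i+1)})\inv\gamma_i^2$, respectively. For the final dichotomy, these recursions relate $\gamma_i^2$ and $\gamma_{i+1}^2$ by nonzero scalars for every $i$ off the axis, so vanishing propagates along the cycle except across the fixed vertex $j$; there the relation $\gamma_{g\cdot i}=\lambda\inv\gamma_i$ of Proposition \ref{prop.vertact} applied to $i=j-1$ (with $g\cdot(j-1)=j+1$) gives $\gamma_{j+1}^2=\gamma_{j-1}^2$, bridging the two sides of the axis. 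It follows that either all $\gamma_i$ vanish or all $\gamma_i$ with $g\cdot i\neq i$ are nonzero.

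I expect the main obstacle to be the index bookkeeping — verifying the $g^2$ formulas and tracking which arrows sit at the axis — together with the care required at the fixed vertex, where $\gamma_j=0$ forces the recursion to skip a step so that the reflection relation must be invoked to reconnect the two sides. A related subtlety is that a reflection may admit a second fixed vertex (when $n$ and $d$ are both even) or an intermediate swap pair; the local computation above applies verbatim at each such point, and one must simply exclude each axis neighborhood from the recursion and insert the corresponding bridge in the propagation argument.
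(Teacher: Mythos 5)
Your proposal is correct and follows essentially the same route as the paper: apply \eqref{eq.gam_sig} with exponent $2$, compute $g^2$ and $\sigma^2$ from Lemma \ref{lem.refl_sigma}, and read off the identities, bridging across the fixed vertex via $\gamma_{j+1}=-\gamma_{j-1}$. Your explicit preliminary reduction to $r=2$ (and the observation that everything vanishes when $r>2$) is a point the paper leaves implicit, but it does not change the argument.
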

\begin{proof}
First, suppose that $i \neq j-1,j$ where $g \cdot j=j$. Then $\sigma^2(a_i)=\sigma^2(a_i^*)=0$.
This includes the case where the axis of reflection passes between $k$ and $k+1$. Then \eqref{eq.gam_sig} implies
\begin{align*}
0 &= \sigma^2(a_i) = \gamma_i^2 g^2(a_i) - \gamma_{i+1}^2 a_i
    = \left(\gamma_i^2 \mu_i \mu_{n-(d+i+1)}^* - \gamma_{i+1}^2 \right)a_i \\
0 &= \sigma^2(a_i^*) = \gamma_{i+1}^2 g^2(a_i^*) - \gamma_i^2 a_i^*
    = \left(\gamma_{i+1}^2 \mu_i^* \mu_{n-(d+i+1)} - \gamma_i^2 \right)a_i^*.
\end{align*}
If $\gamma_i \neq 0$, then $\gamma_{i+1} \neq 0$.
Thus, either $\gamma_i=0$ for all $i$, or $\gamma_i \neq 0$ for all $i$
with $g \cdot i \neq i$. Moreover,
\[
\gamma_i^2 \mu_i \mu_i^* \mu_{n-(d+i+1)}\mu_{n-(d+i+1)}^*
        = \gamma_{i+1}^2\mu_i^* \mu_{n-(d+i+1)}
        = \gamma_i^2.
\]

If $j$ is fixed, then $\gamma_j = 0$ and so by Lemma \ref{lem.refl_sigma},
\begin{align*}
(c_j^*)^2
    &= \sigma^2(a_{j-1}) 
    = \gamma_{j-1}^2 g^2(a_{j-1}) - \gamma_j^2 a_{j-1} 
    = \gamma_{j-1}^2 \mu_{j-1}\mu_j^* a_{j-1} \\
-\mu_j\inv\mu_{j-1}^*c_j^2 a_{j-1}^*
    &= \sigma^2(a_{j-1}^*) 
    = \gamma_j^2 g^2(a_{j-1}^*) - \gamma_{j-1}^2 a_{j-1}^* 
    = - \gamma_{j-1}^2 a_{j-1}^* \\
-\mu_j\inv\mu_{j-1}^*c_j^2 a_j
    &= \sigma^2(a_j) 
    = \gamma_j^2 g^2(a_j) - \gamma_{j+1}^2 a_j 
    =  - \gamma_{j+1}^2 a_j, \\
(c_j^*)^2 a_j^* 
    &= \sigma^2(a_j^*) 
    = \gamma_{j+1}^2 g^2(a_j^*) - \gamma_j^2 a_j^* 
    =  \gamma_{j+1}^2 \mu_j^*\mu_{j-1} a_j^*.
\end{align*}
The first and fourth (resp., second and third) equations given equivalent conditions through $\gamma_{j+1}=-\gamma_{j-1}$.
\end{proof}

The next lemma considers consequences of passing to the quotient.

\begin{lemma}\label{lem.refl_quot}
Suppose $\gtaft$ acts linearly and inner faithfully on $\kk\overline{Q}$ so that $g\cdot e_i=e_{n-(d+i)}$ for some $0\leq d\leq n-1$, and this action descends to an action on $\Pi_Q$.

(I) Suppose that the axis of reflection passes through the vertex $j$. Then
\[ c_j^* = -\mu_j\inv c_j.\]
If $c_j \neq 0$, then $\mu_i\mu_i^*=1$ for all $i$.

(II) Suppose that the axis of reflection passes between the vertices $k$ and $k+1$. If \eqref{eq.sigrel} holds, then
\[ c_k=-\mu_k c_k^*. \]
If $c_k \neq 0$, then $\lambda=-1$ and $\mu_i\mu_i^*=1$ for all $i$.
\end{lemma}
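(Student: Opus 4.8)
The statement of Lemma \ref{lem.refl_quot} has two parts, each asserting a relation among the $\sigma$-coefficients forced by the descent condition \eqref{eq.sigrel}, together with a consequence for the $\mu_i\mu_i^*$ when the relevant coefficient is nonzero. The natural approach is to specialize the descent relation \eqref{eq.sigrel},
\[
0 = a_i^*\sigma(a_i) + \sigma(a_i^*)(g \cdot a_i) - a_{i+1}\sigma(a_{i+1}^*) - \sigma(a_{i+1})(g \cdot a_{i+1}^*),
\]
to the indices $i$ near the axis of reflection, substitute the explicit formulas for $\sigma$ from Lemma \ref{lem.refl_sigma}, and read off the resulting scalar identities in $\Pi_Q$. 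Since $\sigma(a_i)=\sigma(a_i^*)=0$ whenever $i\neq j-1,j,k$, only a small number of values of $i$ contribute, so the computation is genuinely finite in each case.

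\textbf{Part (I) (axis through $j$).} Here I would plug $i=j-1$ (and symmetrically $i=j$) into \eqref{eq.sigrel}. Using Lemma \ref{lem.refl_sigma}(1), the four terms become concrete paths: for instance $a_{j-1}^*\sigma(a_{j-1}) = -c_j^* a_{j-1}^* a_{j-1}$ and $\sigma(a_{j-1}^*)(g\cdot a_{j-1}) = -\mu_j\inv\mu_{j-1}^* c_j a_j \cdot (g\cdot a_{j-1})$, with $g\cdot a_{j-1}=\mu_{j-1}a_j^*$. The key point is that in $\Pi_Q$ the preprojective relation \eqref{eq.preproj} lets me rewrite $a_{j-1}^*a_{j-1}$ and $a_{j}a_{j}^*$ in terms of one another at the common vertex, so the various length-two paths collapse onto a single basis path; matching its coefficient to zero yields a linear relation between $c_j$ and $c_j^*$, which I expect to simplify (using $c_j^* = \lambda\inv c_j^* $ and $\lambda^2=1$ from Lemma \ref{lem.refl_sigma}(1)) to $c_j^* = -\mu_j\inv c_j$. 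For the $\mu_i\mu_i^*=1$ conclusion: when $c_j\neq0$ the above already forces $\lambda^2=1$, and combining $c_j^*=-\mu_j\inv c_j$ with the squared identities $c_j^2=\mu_j(\mu_{j-1}^*)\inv\gamma_{j-1}^2$ and $(c_j^*)^2=\mu_j^*\mu_{j-1}\gamma_{j-1}^2$ from Lemma \ref{lem.refl_gam} gives $\mu_j\mu_j^*\mu_{j-1}\mu_{j-1}^* = $ (something forcing the product to $1$); propagating via \eqref{eq.mu_rel} (each $\mu_i\mu_i^*$ equals its neighbour) then spreads $\mu_i\mu_i^*=1$ to all $i$.

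\textbf{Part (II) (axis between $k$ and $k+1$).} This is the analogous computation at $i=k$, using Lemma \ref{lem.refl_sigma}(2): $\sigma(a_k)=c_k e_k$, $\sigma(a_k^*)=\lambda\mu_k^* c_k e_{k+1}$, and $g\cdot a_k = \mu_k a_k^*$ (since $n-(d+k+1)=k$). Substituting into \eqref{eq.sigrel} and simplifying the resulting paths — here the idempotents $e_k,e_{k+1}$ make several terms telescope immediately — should produce $c_k = -\mu_k c_k^*$, where $c_k^* = \lambda\mu_k^* c_k$ from Lemma \ref{lem.refl_sigma}(2). When $c_k\neq0$, substituting $c_k^*=\lambda\mu_k^* c_k$ into $c_k=-\mu_k c_k^*$ gives $1 = -\lambda\mu_k\mu_k^*$; combined with the earlier $\lambda^2\mu_k\mu_k^*=1$ from Lemma \ref{lem.refl_sigma}(2), this pins down $\lambda=-1$ and $\mu_k\mu_k^*=1$, and again \eqref{eq.mu_rel} propagates $\mu_i\mu_i^*=1$ to all $i$.

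\textbf{Main obstacle.} The delicate part is Part (I): the terms of \eqref{eq.sigrel} are length-two paths that do not all coincide a priori, so I must carefully use the preprojective relation $a_i^*a_i = a_{i+1}a_{i+1}^*$ to express every term as a multiple of one fixed basis element before comparing coefficients, and I must track the correct indexing of the $\mu$'s and $c$'s under the reflection (which reverses orientation). The sign and which-$\mu$-appears bookkeeping is where an error is most likely, so I would double-check the two symmetric choices $i=j-1$ and $i=j$ give consistent relations, and confirm the $\lambda^2=1$ constraint from Lemma \ref{lem.refl_sigma}(1) is used correctly to collapse $\lambda$ versus $\lambda\inv$ factors.
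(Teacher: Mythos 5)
Your overall strategy --- specializing \eqref{eq.sigrel} at the indices adjacent to the axis and substituting the formulas of Lemma \ref{lem.refl_sigma} --- is exactly the paper's, and your Part (II) is correct: the single instance $i=k$ gives $(1+\lambda\mu_k\mu_k^*)c_ka_k^*=0$, equivalently $c_k=-\mu_kc_k^*$, and combining $1+\lambda\mu_k\mu_k^*=0$ with $\lambda^2\mu_k\mu_k^*=1$ from Lemma \ref{lem.refl_sigma}(2) does pin down $\lambda=-1$ and $\mu_k\mu_k^*=1$ (the paper extracts $\lambda=-1$ from the extra instance $i=k-1$, but your route is equally valid). One remark on Part (I): the identity $c_j^*=-\mu_j\inv c_j$ falls out most cleanly from $i=j$, where the only surviving terms $a_j^*(c_ja_{j-1}^*)$ and $(c_j^*a_j^*)(\mu_ja_{j-1}^*)$ are already multiples of the same path $a_j^*a_{j-1}^*$ and no preprojective relation is needed; you do list $i=j$, so this is fine, but the $i=j-1$ relation you emphasize is the messier one and by itself does not isolate $c_j^*=-\mu_j\inv c_j$.

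The genuine gap is in your derivation of $\mu_i\mu_i^*=1$ in Part (I). You propose to combine $c_j^*=-\mu_j\inv c_j$ with the squared identities $c_j^2=\mu_j(\mu_{j-1}^*)\inv\gamma_{j-1}^2$ and $(c_j^*)^2=\mu_j^*\mu_{j-1}\gamma_{j-1}^2$ from Lemma \ref{lem.refl_gam}. Carrying this out gives $\mu_j\inv(\mu_{j-1}^*)\inv=\mu_j^*\mu_{j-1}$, i.e.\ $\mu_j\mu_j^*\mu_{j-1}\mu_{j-1}^*=1$, and with \eqref{eq.mu_rel} this is only $(\mu_j\mu_j^*)^2=1$: you have determined $\mu_j\mu_j^*$ up to sign, not shown it equals $1$. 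To close the gap, return to the $i=j-1$ instance of \eqref{eq.sigrel} that you already wrote down: after using the preprojective relation $a_{j-1}^*a_{j-1}=a_ja_j^*$ to put all four terms on the single path $a_ja_j^*$, then substituting $c_j^*=-\mu_j\inv c_j$ and $\mu_j\mu_j^*=\mu_{j-1}\mu_{j-1}^*$ from \eqref{eq.mu_rel}, the coefficient collapses to $2\mu_j\inv(1-\mu_{j-1}\mu_{j-1}^*)c_j$, so $c_j\neq0$ forces $\mu_{j-1}\mu_{j-1}^*=1$ exactly (note $r=2$ here, so the characteristic is not $2$ and the factor of $2$ is harmless); \eqref{eq.mu_rel} then propagates this to all $i$. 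This is precisely how the paper finishes Part (I).
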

\begin{proof}
(I) Since $g \cdot j = j$, then $\sigma(a_{j+1})=\sigma(a_{j+1}^*)=0$ and so
\begin{align*}
0   &= a_j^*\sigma(a_j) + \sigma(a_j^*)(g \cdot a_j) - a_{j+1}\sigma(a_{j+1}^*) - \sigma(a_{j+1})(g \cdot a_{j+1}^*) \\
    &= a_j^*(c_j a_{j-1}^*) + (c_j^*a_j^*)(\mu_j a_{j-1}^*)
    = (c_j+\mu_j c_j^*)a_j^*a_{j-1}^*.
\end{align*}
This implies that $c_j=-\mu_jc_j^*$. Now
\begin{align*}
0 &= a_{j-2}^*\sigma(a_{j-2}) + \sigma(a_{j-2}^*)(g \cdot a_{j-2}) 
    - a_{j-1}\sigma(a_{j-1}^*) - \sigma(a_{j-1})(g \cdot a_{j-1}^*) \\
    &= - \left( a_{j-1}(-\mu_j\inv\mu_{j-1}^*c_j a_j) + (-c_j^*a_{j-1})(\mu_{j-1}^* a_j) \right) \\
    &= \mu_{j-1}^*\left( \mu_j\inv c_j  + c_j^* \right) a_{j-1}a_j = 0,
\end{align*}
and 
\begin{align*}
0   &= a_{j-1}^*\sigma(a_{j-1}) + \sigma(a_{j-1}^*)(g \cdot a_{j-1}) 
        - a_j\sigma(a_j^*) - \sigma(a_j)(g \cdot a_j^*) \\
    &= a_{j-1}^*(-c_j^* a_{j-1}) + (-\mu_j\inv\mu_{j-1}^*c_ja_j)(\mu_{j-1}a_j^*)    - a_j(c_j^*a_j^*) - (c_j a_{j-1}^*)(\mu_j^* a_{j-1}) \\
    &= (-c_j^*-\mu_j\inv\mu_{j-1}^*\mu_{j-1}c_j - c_j^* - \mu_j^*c_j)a_ja_j^* \\
    &= \mu_j\inv(1-\mu_{j-1}^*\mu_{j-1}) c_j a_ja_j^*.
\end{align*}
Thus, if $c_j\neq 0$, then $\mu_{j-1}\mu_{j-1}^*=1$. 
Now by \eqref{eq.mu_rel}, $\mu_i\mu_i^*=1$ for all $i$.

(II) For $i=k-1$,
\begin{align*}
0   &= a_{k-1}^*\sigma(a_{k-1}) + \sigma(a_{k-1}^*)(g \cdot a_{k-1}) 
        - a_k\sigma(a_k^*) - \sigma(a_k)(g \cdot a_k^*)  \\
    &= - \left(a_k(\lambda\mu_k^*c_k e_{k+1}) + (c_k e_k)(\mu_k^* a_k) \right)
    = -(\lambda+1)\mu_k^*c_ka_k.
\end{align*}
Thus, if $c_k \neq 0$ then $\lambda=-1$. Now for $i=k$,
\begin{align*}
0   &= a_k^*\sigma(a_k) + \sigma(a_k^*)(g \cdot a_k) 
        - a_{k+1}\sigma(a_{k+1}^*) - \sigma(a_{k+1})(g \cdot a_{k+1}^*) \\
    &= a_k^*(c_k e_k) + (\lambda\mu_k^* c_k e_{k+1})(\mu_k a_k^*)
    = (1+\lambda\mu_k^*\mu_k) c_k a_k^*.
\end{align*}
In the case that $c_k \neq 0$, then $\mu_k\mu_k^*=1$ by Lemma \ref{lem.refl_sigma} and $\mu_i\mu_i^*=1$ for all $i$ by \eqref{eq.mu_rel}.

It is clear that if $i$ is not one of the above values, then $\sigma(a_i)=\sigma(a_{i+1})=0$ and $\sigma(a_i^*)=\sigma(a_{i+1}^*)=0$. Thus, \eqref{eq.sigrel} is satisfied trivially in this case.
\end{proof}

\begin{lemma}\label{lem.refl_faithful}
Suppose $\gtaft$ acts linearly and inner faithfully on $\kk\overline{Q}$ so that $g$ acts on $\kk\overline{Q}$ via a reflection automorphism with $g\cdot e_i=e_{n-(d+i)}$ for some $0< d \leq n-1$, and this action descends to an action on $\Pi_Q$.  Then $r=2$ and $\mu_i\mu_i^*=1$.
\end{lemma}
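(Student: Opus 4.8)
The plan is to exploit the fact that, for a reflection, $g^2$ fixes every vertex. Indeed, $g^2 \cdot e_i = e_{n-(d+(n-(d+i)))} = e_i$, so the relation $\gamma_{g\cdot i} = \lambda\inv\gamma_i$ from Proposition \ref{prop.vertact} iterates to $\gamma_i = \lambda^{-2}\gamma_i$ for every $i$. Hence if a single $\gamma_i$ is nonzero then $\lambda^2 = 1$; since $\lambda$ is a primitive $r\th$ root of unity with $r > 1$, this forces $\lambda = -1$ and $r = 2$. Inner faithfulness means $x\cdot\kk\overline{Q}\neq 0$, which (because $x\cdot e_i = \gamma_i(e_i - \lambda\inv e_{g\cdot i})$ and $x$ acts on arrows through the $\gamma_i$ and $\sigma$) is equivalent to requiring that some $\gamma_i\neq 0$ or that $\sigma\neq 0$. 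I would split the argument along this dichotomy.

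First suppose $\sigma\neq 0$. By Lemma \ref{lem.refl_sigma}(3) the only places $\sigma$ is nonzero are adjacent to a fixed vertex $j$ (with coefficients $c_j,c_j^*$) or at a swapped pair $k,k+1$ (coefficient $c_k$). If some $c_j$ or $c_j^*$ is nonzero, then Lemma \ref{lem.refl_quot}(I) gives $c_j^* = -\mu_j\inv c_j$, so in fact $c_j\neq 0$; Lemma \ref{lem.refl_sigma}(1) then yields $\lambda^2 = 1$ (hence $r=2$) and Lemma \ref{lem.refl_quot}(I) yields $\mu_i\mu_i^* = 1$ for all $i$. If instead some $c_k\neq 0$, then Lemma \ref{lem.refl_quot}(II) directly gives $\lambda = -1$ (hence $r=2$) and $\mu_i\mu_i^* = 1$. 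Either way the conclusion holds.

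Now suppose $\sigma = 0$. Inner faithfulness then forces some $\gamma_i\neq 0$, so $r=2$ by the preliminary observation, and Lemma \ref{lem.refl_gam} upgrades this to $\gamma_i\neq 0$ for every non-fixed vertex $i$. Here I would split on whether the reflection has a fixed vertex. If a fixed vertex $j$ exists, then $c_j = 0$ (as $\sigma=0$), and the identity $c_j^2 = \mu_j(\mu_{j-1}^*)\inv\gamma_{j-1}^2$ of Lemma \ref{lem.refl_gam} forces $\gamma_{j-1} = 0$; but $j-1$ is non-fixed (a reflection of the $n$-cycle with $n\geq 3$ has no two adjacent fixed vertices), so $\gamma_{j-1}\neq 0$, a contradiction. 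Thus no fixed vertex exists, and the reflection instead swaps every vertex in pairs, so there is a swapped pair $k,k+1$ with $n-(d+k) = k+1$. The index arithmetic then gives $n-(d+k+1) = k$, so the relation $\gamma_{k+1}^2 = \mu_k\mu_{n-(d+k+1)}^*\gamma_k^2$ from Lemma \ref{lem.refl_gam} reads $\gamma_{k+1}^2 = \mu_k\mu_k^*\gamma_k^2$. Since $\gamma_{k+1} = \lambda\inv\gamma_k$ and $\lambda^2 = 1$, the left side equals $\gamma_k^2$, and as $\gamma_k\neq 0$ we conclude $\mu_k\mu_k^* = 1$; by \eqref{eq.mu_rel} this propagates to $\mu_i\mu_i^* = 1$ for all $i$.

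The main obstacle is the $\sigma = 0$ subcase, where neither Lemma \ref{lem.refl_sigma} nor Lemma \ref{lem.refl_quot} supplies the scalar constraint directly. The crucial points are recognizing that a fixed vertex is incompatible with $\sigma = 0$ once some $\gamma_i$ is nonzero (via the $c_j^2$ formula), and that at a swapped pair the arithmetic $n-(d+k+1)=k$ makes $g^2$ act on $a_k$ by exactly $\mu_k\mu_k^*$, so that the already-known relation $\gamma_{k+1} = \lambda\inv\gamma_k$ pins down $\mu_k\mu_k^*$. Everything else reduces to quoting the structural lemmas of this subsection.
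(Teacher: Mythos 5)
Your proof is correct and relies on exactly the same structural lemmas as the paper's, but you organize the case analysis around $\sigma$ (zero versus nonzero) where the paper organizes it around the $\gamma_i$ (all zero versus some nonzero). The two decompositions shuffle the same subcases: the paper's ``some $\gamma_i\neq 0$ with a fixed vertex $j$'' argument runs $\gamma_{j-1}\neq 0\Rightarrow c_j\neq 0$ via the identity $c_j^2=\mu_j(\mu_{j-1}^*)\inv\gamma_{j-1}^2$ of Lemma \ref{lem.refl_gam} and then invokes Lemma \ref{lem.refl_quot}(I), whereas you use the same identity in the contrapositive direction (in the $\sigma=0$ branch, $c_j=0$ forces $\gamma_{j-1}=0$, contradicting nonvanishing at non-fixed vertices) to rule the fixed-vertex configuration out entirely; the swapped-pair computation $\gamma_k^2=\gamma_{k+1}^2=\mu_k\mu_k^*\gamma_k^2$ is identical in both. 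Your organization is arguably a little cleaner in the branch where $\sigma\neq 0$, since there you get $r=2$ and $\mu_i\mu_i^*=1$ uniformly from Lemmas \ref{lem.refl_sigma} and \ref{lem.refl_quot} without any reference to the $\gamma_i$, while the paper must separately observe in its all-$\gamma$-zero case that $\sigma$ can only be supported at a swapped pair. One presentational quibble: the implication ``$c_j$ or $c_j^*$ nonzero $\Rightarrow\lambda^2=1$'' that you attribute to Lemma \ref{lem.refl_sigma}(1) is established in that lemma's proof (and is implicit in the signs appearing in its statement) rather than stated explicitly, so you should either cite it as such or rederive it from the relation $c_{j-1}=\lambda c_j^*=\lambda\inv c_j^*$.
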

\begin{proof}
By Proposition \ref{prop.vertact}, $\gamma_i = \lambda\inv \gamma_{g \cdot i} = \lambda\inv \gamma_{n-(d+i)}$. It follows that $\gamma_i = \lambda^{-2}\gamma_i$. Thus, if $\gamma_i \neq 0$ for some $i$, then $\lambda=-1$. Furthermore, $\#(G\cdot i)=2=r$.

Since $\gamma_i\neq 0$ for some $i$, then
$\gamma_i\neq 0$ for all $i$ with $g\cdot i \neq i$ by Lemma \ref{lem.refl_gam}. If there is a vertex $j$ such that $g \cdot j = j$, then $\gamma_{j-1}\neq 0$ and thus $c_j \neq 0$. Then by Lemma \ref{lem.refl_quot}, $\mu_i\mu_i^*=1$ for all $i$.
If there is no such vertex, then there exists a vertex $k$ such that $g \cdot k = k+1$ and $g \cdot (k+1)=k$. Thus, $\gamma_{k+1}=-\gamma_k$ and so by Lemma \ref{lem.refl_gam}, $\gamma_k^2 = \gamma_{k+1}^2 = \mu_k\mu_k^* \gamma_k^2$ and therefore $\mu_k\mu_k^*=1$. By \eqref{eq.mu_rel}, $\mu_i\mu_i^*=1$ for all $i$.

Finally, suppose that $\gamma_i=0$ for all $i$. Then there exists a vertex $k$ such that the axis passes between vertices $k$ and $k+1$, and further that $c_k,c_k^* \neq 0$. Now $\lambda=-1$ by Lemma \ref{lem.refl_gam} and $\mu_i\mu_i^*=1$ for all $i$ by Lemma \ref{lem.refl_quot}.
\end{proof}

The main theorem of this section now summarizes the results above.

\begin{theorem}\label{thm.refl}
Suppose $\gtaft$ acts linearly and inner faithfully on $\kk\overline{Q}$ so that $g$ acts on $\kk\overline{Q}$ via a reflection automorphism with $g\cdot e_i=e_{n-(d+i)}$ for some $0< d \leq n-1$, and this action descends to an action on $\Pi_Q$. Then $r=2$ and the action is described as follows:
The following hold:
\begin{enumerate}
    \item There exists scalars $\mu_i \in \kk^\times$ such that
    \[
        g \cdot e_i = e_{n-(d+i)}, \qquad
        g \cdot a_i = \mu_i a_{n-(d+i+1)}^*, \qquad
        g \cdot a_i^* = \mu_i\inv a_{n-(d+i+1)}
    \]
    where $(\mu_i\mu_{n-(d+i+1)}\inv)^{m/2}=1$ for all $i$.
    
    \item There exist scalars $\gamma_i \in \kk$ so that
    \[ x \cdot e_i = \gamma_i (e_i + e_{n-(d+i)}) \]
    and the $\gamma_i$ satisfy $\gamma_i = -\gamma_{n-(d+i)}$ (so 
    $\gamma_j=0$ when $g \cdot j = j$) and
    \[ \gamma_{i+1}^2 = \mu_i\mu_{n-(d+i+1)}\inv \gamma_i^2\]
    for $i \neq j-1,j$.
    
    \item There exists a quiver-Taft map $\sigma: \kk Q_0 \oplus \kk Q_1 \to \kk Q_0 \oplus \kk Q_1$ so that
    \begin{align*}
        x \cdot a_i &= \gamma_{i+1}a_i + \gamma_i \mu_i a_{n-(d+i+1)}^* + \sigma(a_i) \\
        x \cdot a_i^* &= \gamma_{i}a_i^* + \gamma_{i+1} \mu_i\inv a_{n-(d+i+1)} + \sigma(a_i^*)
    \end{align*}
    where the following hold:
    \begin{itemize}
    \item If $j$ is a vertex such that $g \cdot j=j$, then 
    let $c_j=\pm \gamma_{j-1} \sqrt{\mu_j\mu_{j-1}}$. 
    The map $\sigma$ satisfies
    \begin{align*} 
    \sigma(a_{j-1}) &= \mu_j\inv c_j a_{j-1}, &
        \sigma(a_j) &= c_j a_{j-1}^*, \\
    \sigma(a_{j-1}^*) &= -(\mu_j\mu_{j-1})\inv c_j a_j, &
        \sigma(a_j^*) &= -\mu_j\inv c_j a_{j}^*.
    \end{align*}
    \item If $k$ is a vertex such that $g \cdot k=k+1$ and $g \cdot (k+1)=k$, then there exists a scalar $c_k$ such that
    the map $\sigma$ satisfies
    \[ \sigma(a_k)=c_k e_k, \quad
    \sigma(a_k^*)=-\mu_k\inv c_k e_{k+1}.\]
    If $\gamma_i=0$ for all $i$, then $c_k\neq0$ for some $k$.
    \item If $i$ satisfies neither of the above conditions, then $\sigma(a_i)=\sigma(a_i^*)=0$.
    \end{itemize}
\end{enumerate}
\end{theorem}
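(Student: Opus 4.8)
The plan is to assemble the theorem from the four preceding lemmas of this subsection, since each numbered conclusion is essentially a repackaging of one of them once the normalizations $r=2$, $\lambda=-1$, and $\mu_i^*=\mu_i\inv$ are in force. First I would invoke Lemma \ref{lem.refl_faithful} to record that $r=2$ and $\mu_i\mu_i^*=1$ for all $i$; the former forces $\lambda$ to be a primitive second root of unity, i.e. $\lambda=-1$ and $\lambda\inv=-1$, and the latter lets me replace every $\mu_i^*$ by $\mu_i\inv$ throughout. These two substitutions are what convert the $\lambda$- and $\mu^*$-dependent expressions of the earlier lemmas into the clean forms stated in the theorem.

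For part (1), I would start from the reflection description in Lemma \ref{lem.gact}(2), substitute $\mu_i^* = \mu_i\inv$, and then derive the order condition. Computing directly, $g^2 \cdot a_i = \mu_i \mu_{n-(d+i+1)}^* a_i = \mu_i \mu_{n-(d+i+1)}\inv a_i$ (the index returning to $i$ after two reflections), and since $g^m=1$ with $r=2\mid m$ this gives $(\mu_i\mu_{n-(d+i+1)}\inv)^{m/2}=1$. For part (2), Proposition \ref{prop.vertact} gives $x\cdot e_i = \gamma_i e_i - \gamma_i\lambda\inv e_{g\cdot i}$, which with $\lambda\inv=-1$ becomes $\gamma_i(e_i+e_{n-(d+i)})$; the relation $\gamma_{g\cdot i}=\lambda\inv\gamma_i$ becomes $\gamma_{n-(d+i)}=-\gamma_i$, immediately forcing $\gamma_j=0$ at a fixed vertex, and the squared recursion is exactly Lemma \ref{lem.refl_gam} after replacing $\mu^*$ by $\mu\inv$.

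Part (3) is the most involved bookkeeping. The formulas for $x\cdot a_i$ and $x\cdot a_i^*$ follow from the $x$-action in Theorem \ref{thm.quiveract} with $\lambda\inv=-1$ and $\mu_i^*=\mu_i\inv$. For the shape of $\sigma$ I would split along the position of the axis, using the remark enumerating the axis positions to guarantee the cases are exhaustive and that each arrow falls under exactly one of them. At a fixed vertex $j$ I would take the raw formulas of Lemma \ref{lem.refl_sigma}(1), substitute the quotient relation $c_j^* = -\mu_j\inv c_j$ from Lemma \ref{lem.refl_quot}(I) and the identification $c_j^2 = \mu_j\mu_{j-1}\gamma_{j-1}^2$ from Lemma \ref{lem.refl_gam} (again using $\mu_{j-1}^*=\mu_{j-1}\inv$), which yields $c_j = \pm\gamma_{j-1}\sqrt{\mu_j\mu_{j-1}}$ together with the four displayed values of $\sigma$. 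At a swap vertex $k$ I would substitute $\lambda=-1$ and $\mu_k^*=\mu_k\inv$ into Lemma \ref{lem.refl_sigma}(2) to get $\sigma(a_k)=c_k e_k$ and $\sigma(a_k^*)=-\mu_k\inv c_k e_{k+1}$. Finally, the assertion that $c_k\neq 0$ for some $k$ when all $\gamma_i$ vanish is the inner faithfulness argument concluding Lemma \ref{lem.refl_faithful}: with every $\gamma_i=0$, Lemma \ref{lem.refl_gam} forces $c_j=c_j^*=0$ at any fixed vertex, so the only possible nonzero component of $x\cdot \Pi_Q$ lives at a swap vertex, and inner faithfulness demands that it be nonzero.

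The steps themselves are routine substitutions; the one place demanding genuine care is the index arithmetic modulo $n$ together with the exhaustiveness of the axis-position cases. I expect the main obstacle to be confirming that the fixed-vertex and swap-vertex descriptions, drawn from lemmas proved under the standing inner-faithful hypothesis, patch together consistently for every $i\in Q_0$ — in particular that $\mu_i\mu_i^*=1$ genuinely holds at all vertices (not merely near the axis), which is precisely why Lemma \ref{lem.refl_faithful} is invoked first and why \eqref{eq.mu_rel} is needed to propagate that identity around the cycle.
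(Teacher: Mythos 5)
Your proposal is correct and follows essentially the same route as the paper's proof: invoke Lemma \ref{lem.refl_faithful} for $r=2$ and $\mu_i^*=\mu_i\inv$ (hence $\lambda=-1$), derive the root-of-unity condition from $g^m=1$, and then assemble parts (2) and (3) from Lemmas \ref{lem.refl_gam}, \ref{lem.refl_sigma}, and \ref{lem.refl_quot}. The paper's own proof is just a terser version of this same assembly, so no further comment is needed.
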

\begin{proof}
That $r=2$ follows from Lemma \ref{lem.refl_faithful},
as does the fact that $\mu_i^*=\mu_i\inv$ for all $i$.
This implies that $\lambda=-1$.
The condition $(\mu_i\mu_{n-(d+i+1)}\inv)^{m/2}=1$ is then
necessitated by the fact that $g^m=1$.
The additional conditions on $\gamma_i$ follow from Lemma \ref{lem.refl_gam}.
The expression of $\sigma$ follows from Lemma \ref{lem.refl_sigma}.
Then Lemmas \ref{lem.refl_gam} and \ref{lem.refl_quot} reduce
the number of parameters.
\end{proof}

\section{Rotation Invariants}
\label{sec.invariants}

Given the Taft algebra actions above, it is reasonable to consider
the invariants of the actions. This section considers invariants
in the case of rotation actions. It is shown that the invariants corresponding to certain rotation actions give the center of $\Pi_Q$.
Invariants of reflection actions will be reserved for later work.

The following additional hypotheses will simplify the analysis.

\begin{hypothesis}\label{hyp.invariants}
Suppose $T=\gtaft$ acts linearly and inner faithfully on $\kk\overline{Q}$ such that $g$ acts on $\kk\overline{Q}$ via a rotation automorphism with $g\cdot e_i=e_{i+d}$ for some $0< d \leq n-1$,
and that this decends to an inner faithful action on $\Pi_Q$. 
Assume further that $r=m$, $n \geq 3$, 
and that $\sigma=0$, so that $\gamma_i \neq 0$ for all $i$.
\end{hypothesis}

With Hypothesis \ref{hyp.invariants}, the action is given explicitly as 
\begin{align*}
    g \cdot e_i &= e_{i+d}  &
        x \cdot e_i &= \gamma_i (e_i - \lambda\inv e_{i+d}) \\
    g \cdot a_i &= \mu_i a_{i+d} &
        x \cdot a_i &= \gamma_{i+1} a_i - \gamma_{i} \lambda\inv(a_{i+d}) \\
    g \cdot a_i^* &= \mu_i^* a_{i+d}^*  &
        x \cdot a_i^* &= \gamma_i a_i^* - \gamma_{i+1} \lambda\inv(a_{i+d}^*) 
\end{align*}
where $\mu_0\mu_1\cdots\mu_{n-1}=\mu_0^*\mu_1^*\cdots\mu_{n-1}^*=1$.

Let $\widehat{G}$ denote the character group of $G$.
For $\chi_\alpha \in \widehat{G}$, let $\chi_\alpha(g) = \lambda^{-\alpha}$. For a path $p$, then character-weighted orbit associated to $\chi_\alpha$ is defined as
\begin{align}\label{eq.charsum}
\phi_\alpha(p) = \sum_{i=0}^{r-1} \chi_\alpha(g^i) g^i(p).
\end{align}
Hence, 
\begin{align}\label{eq.ginv}
g \cdot \phi_\alpha(p)
	= \sum_{i=0}^{r-1} \chi_\alpha(g^i) g^{i+1}(p)
        = \lambda^\alpha \sum_{i=0}^{r-1} \chi_\alpha(g^{i+1}) g^{i+1}(p)
	= \lambda^\alpha \phi_\alpha(p).
\end{align} 

\begin{lemma}\label{lem.inv_deg0}
Assume Hypothesis \ref{hyp.invariants}.
Let $\{ e_0,\hdots,e_{\tau-1}\}$ be a complete set of coset representatives of the $g$-action on $\overline{Q}_0$. Then 
$\{ \phi_0(e_i) : i=0,\hdots,\tau-1\}$ is a $\kk$-basis for $((\Pi_Q)^T)_0$.
That is, the number of vertices of the quiver supporting $(\Pi_Q)^T$ is equal to the number of $g$-orbits.
\end{lemma}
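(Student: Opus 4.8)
The plan is to compute $((\Pi_Q)^T)_0$ directly, exploiting the fact that degree $0$ is unaffected by the preprojective relation (which sits in degree $2$): thus $(\Pi_Q)_0 = \kk Q_0$, spanned by $e_0,\dots,e_{n-1}$. Since $T=\gtaft$ is generated as an algebra by $g$ and $x$, an element $w \in \kk Q_0$ is $T$-invariant if and only if $g \cdot w = \varepsilon(g)w = w$ and $x \cdot w = \varepsilon(x)w = 0$. So the first step is to solve these two linear conditions on $w = \sum_i \beta_i e_i$.

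For the $g$-condition, $g \cdot e_i = e_{i+d}$ gives $g \cdot w = \sum_i \beta_i e_{i+d}$, so $g \cdot w = w$ forces $\beta_i = \beta_{i+d}$ for all $i$; that is, $w$ is constant on each $g$-orbit of vertices. The key observation, which I expect to be the crux of the argument, is that the $x$-condition is then automatic. Indeed, under Hypothesis \ref{hyp.invariants} we have $x \cdot e_i = \gamma_i(e_i - \lambda\inv e_{i+d})$, so the coefficient of $e_i$ in $x \cdot w$ is $\beta_i\gamma_i - \lambda\inv \beta_{i-d}\gamma_{i-d}$. Substituting $\beta_{i-d} = \beta_i$ (from $g$-invariance) and $\gamma_{i-d} = \lambda\gamma_i$ (the relation $\gamma_{i+d} = \lambda\inv\gamma_i$ of \eqref{eq.gam_rel}) makes this coefficient vanish identically. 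Hence $((\Pi_Q)^T)_0$ is exactly the space of orbit-constant functions on $\overline{Q}_0$, a space of dimension equal to the number $\tau$ of $g$-orbits.

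It remains to identify the stated spanning set with this space. Since $\chi_0(g^i)=1$ for all $i$, we have $\phi_0(e_j) = \sum_{i=0}^{r-1} g^i(e_j) = \sum_{i=0}^{r-1} e_{j+id}$. Because $\gamma_\ell \neq 0$ for all $\ell$, Lemma \ref{lem.rot_gamma} gives $r = \kappa = \frac{n}{\gcd(n,d)}$, which is precisely the size of the orbit $G\cdot e_j$; thus the indices $j, j+d, \dots, j+(r-1)d$ run once through that orbit, and $\phi_0(e_j) = \sum_{e_\ell \in G\cdot e_j} e_\ell$ is its indicator function. Letting $j$ range over the $\tau$ coset representatives $e_0,\dots,e_{\tau-1}$ yields $\tau$ orbit-indicators with pairwise disjoint supports, hence linearly independent, and they evidently span the space of orbit-constant functions. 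By the previous paragraph each lies in $((\Pi_Q)^T)_0$ and together they exhaust it, so $\{\phi_0(e_i): i=0,\dots,\tau-1\}$ is a $\kk$-basis. This gives the claimed equality between the number of vertices supporting $(\Pi_Q)^T$ and the number of $g$-orbits.
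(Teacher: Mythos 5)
Your proposal is correct. It proves the same cancellation the paper does, but organized in the opposite direction: the paper fixes the candidate elements $\phi_0(e_u)$, notes via \eqref{eq.ginv} that they are $g$-invariant, and then checks $x \cdot \phi_0(e_u) = 0$ by pushing $x$ past $g^i$ with the relation $xg^i = \lambda^{-i}g^ix$ and telescoping; you instead compute the entire space $((\Pi_Q)^T)_0$ by solving the two linear conditions on a general $w = \sum_i \beta_i e_i$, observing that $g$-invariance (orbit-constancy of the $\beta_i$) forces $x\cdot w = 0$ because $\gamma_{i-d} = \lambda\gamma_i$. The underlying identity is the same, but your version has the advantage of making explicit the exhaustion step that the paper compresses into ``it suffices to prove $x\cdot\phi_0(e_u)=0$'' and ``the result follows,'' namely that the $g$-invariants in degree $0$ are exactly the orbit-constant functions, of dimension $\tau$, and that $\phi_0(e_j)$ is the orbit indicator (using $r=\kappa$, which you correctly justify from the hypothesis $\gamma_i \neq 0$). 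Both arguments are complete; yours is marginally more self-contained, the paper's is shorter because \eqref{eq.ginv} already handles the $g$-eigenvalue for all $\phi_\alpha$ at once.
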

\begin{proof}
By \eqref{eq.ginv}, it suffices to prove that $x \cdot \phi_0(e_u) = 0$. A computation gives
\begin{align*}
x \cdot \phi_0(e_u)
	&= \sum_{i=0}^{r-1} xg^i(e_u)
	= \sum_{i=0}^{r-1} \chi_1(g^i) g^i x(e_u)
	= \sum_{i=0}^{r-1} \lambda^{-i} g^i \gamma_u(e_u - \lambda\inv e_{g \cdot u}) \\
	&= \gamma_u \sum_{i=0}^{r-1} (\lambda^{-i} e_{g^i \cdot u} - \lambda^{-(i+1)} e_{g^{i+1} \cdot u})
	= 0.
\end{align*}
The result follows.
\end{proof}

For $u,v \geq 0$ and $k \in Q_0$, define
\[ p_k(u,v) = \prod_{i=0}^{u-1} a_{k+i} \cdot \prod_{j=0}^{v-1} a_{k+(u-1)-j}^*.\]
Here 
$p_k(u,0) = a_ka_{k+1}\cdots a_{k+(u-1)}$ and 
$p_k(0,v) = a_k^*a_{k-1}^* \cdots a_{k-(v-1)}^*$. 
By basic character theory, the set
\[ \{ \phi_\alpha(p_k(u,v)) \mid 0 \leq k \leq r, 0 \leq \alpha \leq s-1,u+v=\ell \} \]
is a basis of $(\Pi_Q)_\ell$.

\begin{lemma}\label{lem.phi_add}
Assume Hypothesis \ref{hyp.invariants}.
For $u,u',v,v' \geq 1$,
\[ \phi_\alpha(p_k(u,v)) \phi_\beta(p_k(u',v')) = \lambda^{(v-u)\beta}\phi_{\alpha+\beta}(p_k(u+u',v+v')).\]
\end{lemma}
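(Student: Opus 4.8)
The plan is to expand both character-weighted sums via \eqref{eq.charsum}, collapse the resulting double sum to a single sum by exploiting that $g$ acts as an algebra automorphism of $\Pi_Q$, and then recognize the outcome as a single character-weighted orbit. Writing $P=p_k(u,v)$ and $P'=p_k(u',v')$, I would start from
\[ \phi_\alpha(P)\,\phi_\beta(P') = \sum_{i=0}^{r-1}\sum_{i'=0}^{r-1}\lambda^{-\alpha i-\beta i'}\,g^i(P)\,g^{i'}(P'), \]
reindex by $i'=i+j$, and use the identity $g^i(P)\,g^{i+j}(P')=g^i\!\bigl(P\,g^{j}(P')\bigr)$ to rewrite the expression as $\sum_{j}\lambda^{-\beta j}\,\phi_{\alpha+\beta}\!\bigl(P\,g^{j}(P')\bigr)$. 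This reduces the problem to understanding a single product $P\,g^{j}(P')$ for each $j$.

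The algebraic heart of the argument is a concatenation identity in $\Pi_Q$, namely $p_k(u,v)\,p_{k+u-v}(u',v')=p_k(u+u',v+v')$. Since $g^{j}(P')$ is a scalar multiple of $p_{k+jd}(u',v')$, the product $P\,g^{j}(P')$ is nonzero only when the target $e_{k+u-v}$ of $P$ equals the source $e_{k+jd}$ of $g^{j}(P')$; when the two relevant vertices lie in a common $g$-orbit this pins down a single residue $j=\delta$ with $\delta d\equiv u-v\pmod n$, and otherwise every term vanishes. For that $\delta$, the product is a scalar times $p_k(u,v)\,p_{k+u-v}(u',v')$, which the identity rewrites as $p_k(u+u',v+v')$. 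I would prove the identity by induction on $v$ (or $u'$), repeatedly applying the preprojective relations $a_i^*a_i=a_{i+1}a_{i+1}^*$ to push the interior ``valley'' $a_{k+u-v}^*a_{k+u-v}$ upward until the two monomials fuse into the single increasing-then-decreasing path; crucially, these relations carry coefficient $1$, so this step introduces no scalar.

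The remaining, and most delicate, step is the scalar bookkeeping. After the collapse, the surviving coefficient is $\lambda^{-\beta\delta}$ multiplied by the scalar by which $g^{\delta}$ rescales $p_k(u',v')$, which is a product of the $\mu_i$ and $\mu_i^*$. Here I would invoke the descent relation \eqref{eq.mu_rel}, which forces $\mu_i\mu_i^*$ to be independent of $i$, together with the normalizations $\mu_0\cdots\mu_{n-1}=\mu_0^*\cdots\mu_{n-1}^*=1$ available under Hypothesis~\ref{hyp.invariants}, to make these $\mu$-products telescope. Factoring the resulting constant out of $\phi_{\alpha+\beta}$ then leaves $\lambda^{(v-u)\beta}\,\phi_{\alpha+\beta}(p_k(u+u',v+v'))$. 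The main obstacle I anticipate is precisely this verification: confirming that the character weight $\lambda^{-\beta\delta}$ and the $g^{\delta}$-scalar acting on $P'$ combine to the clean exponent $(v-u)\beta$, with all dependence on $\delta$ and on the individual $\mu_i$ cancelling. This is the step where the equality $\mu_i\mu_i^*=\text{const}$, the product normalizations, and the relation between $\delta$ and $u-v$ through the rotation parameter $d$ must all be used together, and it is where a careless choice of orbit representative would spoil the stated coefficient.
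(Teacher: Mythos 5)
Your overall strategy is the same as the paper's: expand both character sums, use that $g$ acts by algebra automorphisms to collapse the double sum to a single sum over the one surviving shift, identify the concatenated path with $p_k(u+u',v+v')$ via the preprojective relations, and read off the coefficient. The paper's proof is exactly this computation done in one display: it takes the surviving shift to be $i'=i+u-v$, so that the factor $\chi_\beta(g^{u-v})=\lambda^{(v-u)\beta}$ appears immediately, and treats $g^i(p)\,g^{i+u-v}(q)=g^i\bigl(p_k(u+u',v+v')\bigr)$ as an identity in $\Pi_Q$.

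The gap is precisely the step you flag as the ``main obstacle'': you never carry out the scalar verification, and the mechanism you propose for it cannot succeed as described. By your own (correct) analysis, the surviving shift $\delta$ is determined by $\delta d\equiv u-v\pmod n$, so the character factor you obtain is $\lambda^{-\beta\delta}$, while the target is $\lambda^{-\beta(u-v)}$. The correction you hope to extract from $g^{\delta}$ acting on $P'$ is a product of the $\mu_i,\mu_i^*$ and is independent of $\beta$; no telescoping of $\mu$-products can supply the $\beta$-dependent discrepancy $\lambda^{\beta(\delta-(u-v))}$. To close the argument you must actually show (i) that $\delta$ may be taken congruent to $u-v$ modulo $r$ --- which is exactly the identification $i'=i+u-v$ that the paper makes without comment, and which is where the relation between $d$, $r$, and $n$ under Hypothesis \ref{hyp.invariants} must enter --- and (ii) that the $\mu$-scalar picked up by $g^{\delta}(P')$ cancels against the difference between $g^i\bigl(p_k(u+u',v+v')\bigr)$ and $g^i(P)\,g^{i+\delta}(P')$; this second point is where \eqref{eq.mu_rel} and the normalizations $\mu_0\cdots\mu_{n-1}=\mu_0^*\cdots\mu_{n-1}^*=1$ would actually have to be deployed, and you only gesture at it. As written, the proposal is a plan whose decisive computation --- the only part of the lemma that is not formal --- is left unverified.
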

\begin{proof}
Let $p=p_k(u,v)$, $q=p_k(u',v')$, and $r=p_k(u+u',v+v')$. Then
\begin{align*}
\phi_\alpha(p)\phi_\beta(q)
    &= \left(\sum_{i=0}^{r-1} \chi_\alpha(g^i) g^i(p)\right) \left(\sum_{i=0}^{r-1} \chi_\beta(g^i) g^i(q)\right) \\
    &= \sum_{i=0}^{r-1} \chi_\alpha(g^i)\chi_\beta(g^{i+u-v}) g^i(p)g^{i+u-v}(q) \\
    &= \sum_{i=0}^{r-1} \chi_{\alpha+\beta}(g^i)\chi_\beta(g^{u-v}) g^i(r) \\    
    &= \lambda^{(v-u)\beta} \phi_{\alpha+\beta}(r).
\end{align*}
This proves the result.
\end{proof}

The following notation will be useful in the sequel.
For any $k \in Q_0$, $\gamma_{k+1}^r = \zeta_k \gamma_k^r$ for some $\zeta_k \in \kk^\times$ by Lemma \ref{lem.rot_gamma}. However, since $r=m$ by assumption, then $\zeta_k=1$ for all $k$.
Fix $w_k$ such that $\gamma_{k+1}=\lambda^{w_k} \gamma_k$.

Given $u,v \geq 0$, set
\[  w_k(u,0) = w_k + w_{k+1} + \cdots + w_{k+(u-1)} \qquad
    w_k(0,v) = -(w_k + w_{k-1} + \cdots + w_{k-(v-1)})\]
and for $u,v \geq 1$ set
\[ w_k(u,v) = (w_k + w_{k+1} + \cdots + w_{k+(u-1)}) 
    - (w_{k+(u-1)} + w_{k+(u-1)-1} + \cdots + w_{k+(u-v)}).\]
The $w_k$ satisfy the following:
\begin{align}
\label{eq.inv_wk}
\begin{split}
\lambda\inv\gamma_k &= \gamma_{k+d}=\lambda^{w_k+w_{k+1}+\cdots+w_{k+(d-1)}} \gamma_k\\
\lambda\gamma_k &= \gamma_{k-d} = \lambda^{-(w_{k-1}+w_{k-2}+\cdots+w_{k-d})} \gamma_k.
\end{split}
\end{align}
That is, $w_k(d,0)=w_{k-1}(0,d)=-1$.
This implies that $w_i=w_j$ for $i,j$ on the same $g$-orbit.

\begin{lemma}\label{lem.cyc_xact}
Assume Hypothesis \ref{hyp.invariants}.
Let $u,v \geq 1$. Then
\begin{align}
\label{cyc_xact1}
x \cdot \phi_\alpha(p_k(u,0))
    &= \gamma_k (\lambda^{w_k(u,0)} - \lambda^\alpha)\phi_{\alpha+1}(p_k(u,0)) \\
\label{cyc_xact2}
x \cdot \phi_\alpha(p_k(0,v))
    &= \gamma_{k+1}(\lambda^{w_k(0,v)}-\lambda^{\alpha})\phi_{\alpha+1}(p_k(0,v)) \\
\label{cyc_xact3}
x \cdot \phi_\alpha(p_k(u,v))
    &= \gamma_k (\lambda^{w_k(u,v)} - \lambda^\alpha)\phi_{\alpha+1}(p_k(u,v)).
\end{align}
\end{lemma}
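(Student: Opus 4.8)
The plan is to isolate two facts and then glue them. First I would record a commutation identity that converts the $x$-action into a shift of the character index. The defining relation $gx=\lambda xg$ of $\gtaft$ gives $xg^i=\lambda^{-i}g^ix$ in the Hopf algebra, so using the module axiom $x\cdot(g^i\cdot p)=(xg^i)\cdot p$, for any $p\in\Pi_Q$ we get
\[
x\cdot\phi_\alpha(p)=\sum_{i=0}^{r-1}\chi_\alpha(g^i)\,x\cdot(g^i\cdot p)=\sum_{i=0}^{r-1}\lambda^{-\alpha i}\lambda^{-i}\,g^i\cdot(x\cdot p)=\phi_{\alpha+1}(x\cdot p).
\]
Together with the eigenvector property \eqref{eq.ginv}, which also yields $\phi_{\alpha+1}(g\cdot q)=\lambda^{\alpha+1}\phi_{\alpha+1}(q)$, this reduces all three formulas to computing $x\cdot p$ for the single monomial path $p=p_k(u,v)$ and then applying $\phi_{\alpha+1}$.

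Next I would prove the uniform formula
\[
x\cdot p_k(u,v)=\gamma_{t(p)}\,p_k(u,v)-\lambda\inv\gamma_{s(p)}\,\bigl(g\cdot p_k(u,v)\bigr),
\]
where $s(p)$ and $t(p)$ denote the source and target vertices of $p=p_k(u,v)$. Since $\Delta(x)=1\tensor x+x\tensor g$, expanding $x$ across the $u+v$ arrow factors of $p$ by the skew-Leibniz rule produces a sum indexed by the factor that $x$ lands on; in that summand every factor to the left is untouched while every factor to the right is acted on by $g$, hence shifted by $d$ in its vertex indices. Because $x\cdot a_i$ and $x\cdot a_i^*$ are each a combination of the original arrow and its $g$-translate, every summand is a product of arrows that are either unshifted or shifted by $d$. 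Carrying out the expansion in the free path algebra $\kk\overline{Q}$ (where a product of arrows is nonzero only if consecutive arrows concatenate) and using $d\neq 0$, an interior summand cannot bridge its unshifted left block to its shifted right block, so it vanishes; only the first summand (taking the $g$-shifted part of the first arrow, which matches the shifted right block) and the last summand (taking the unshifted part of the last arrow, which matches the unshifted left block) survive. The last yields $\gamma_{t(p)}p_k(u,v)$, and in the first the accumulated scalars $\mu_\bullet,\mu_\bullet^*$ assemble precisely into the coefficient of $g\cdot p_k(u,v)$; descending to $\Pi_Q$ gives the displayed identity.

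Finally I would assemble the pieces. Applying $\phi_{\alpha+1}$ to the uniform formula and using $\phi_{\alpha+1}(g\cdot p)=\lambda^{\alpha+1}\phi_{\alpha+1}(p)$ gives
\[
x\cdot\phi_\alpha(p_k(u,v))=\bigl(\gamma_{t(p)}-\lambda^\alpha\gamma_{s(p)}\bigr)\phi_{\alpha+1}(p_k(u,v)).
\]
Reading off the endpoints (all indices mod $n$) --- $(s,t)=(k,k+u)$ for $p_k(u,0)$, $(s,t)=(k+1,k-v+1)$ for $p_k(0,v)$, and $(s,t)=(k,k+u-v)$ for $p_k(u,v)$ --- and telescoping $\gamma_{i+1}=\lambda^{w_i}\gamma_i$ along the path (this is exactly the content of the definitions of $w_k(u,0),w_k(0,v),w_k(u,v)$ and \eqref{eq.inv_wk}) rewrites $\gamma_{t(p)}$ as $\lambda^{w}\gamma_{s(p)}$ with $w$ the relevant weight, producing $\gamma_{s(p)}(\lambda^{w}-\lambda^\alpha)\phi_{\alpha+1}(p_k(u,v))$ and hence \eqref{cyc_xact1}--\eqref{cyc_xact3}. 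The main obstacle is the middle step: verifying that all interior terms of the skew-Leibniz expansion vanish (where the hypothesis $d\neq 0$ is essential) and that the two surviving boundary terms carry exactly the scalars $\gamma_{t(p)}$ and $-\lambda\inv\gamma_{s(p)}$ after the $\mu$-products collapse into $g\cdot p$.
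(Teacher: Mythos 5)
Your proof is correct, and it is organized differently from the paper's. The paper proves the three formulas by induction on path length at the level of the averaged elements: it computes $x\cdot\phi_\alpha(a_k)$ directly, then extends to $p_k(u,0)$, $p_k(0,v)$, and finally $p_k(u,v)$ by writing $\phi_\alpha(pq)=\phi_\alpha(p)\phi_0(q)$ and applying the twisted Leibniz rule to products of $\phi$'s, so the cross-term cancellation is absorbed into the multiplication rule of Lemma \ref{lem.phi_add}. You instead isolate two cleaner intermediate facts: the commutation identity $x\cdot\phi_\alpha(p)=\phi_{\alpha+1}(x\cdot p)$ (a direct consequence of $xg^i=\lambda^{-i}g^ix$ and the module axiom, which the paper only uses implicitly in its base case), and the uniform two-term formula $x\cdot p_k(u,v)=\gamma_{t(p)}p_k(u,v)-\lambda\inv\gamma_{s(p)}(g\cdot p_k(u,v))$ for monomial paths, whose proof correctly identifies that every interior term of the skew-Leibniz expansion dies because an unshifted arrow cannot concatenate with a $d$-shifted one when $d\neq 0$ (this holds already at the level of orthogonal idempotents in $\kk\overline{Q}$, so no relation of $\Pi_Q$ is needed). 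Your route has the advantage of handling all three cases \eqref{cyc_xact1}--\eqref{cyc_xact3} uniformly, reducing them to reading off $s(p)$ and $t(p)$ and telescoping $\gamma_{i+1}=\lambda^{w_i}\gamma_i$, and it makes the role of the hypothesis $d\neq 0$ explicit rather than leaving it buried in the product formula for character-weighted orbits; the paper's route has the advantage of reusing Lemma \ref{lem.phi_add}, which it needs anyway for Proposition \ref{prop.inv_orbits}. I verified the endpoint bookkeeping in your last step ($t(p_k(0,v))=k-v+1$ relative to $s=k+1$, and $t(p_k(u,v))=k+u-v$ relative to $s=k$) against the definitions of $w_k(0,v)$ and $w_k(u,v)$; it is consistent.
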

\begin{proof}
These identities can be shown inductively. First,
\begin{align*}
x \cdot \phi_\alpha(a_k)
    &= \sum_{i=0}^{r-1} \chi_\alpha(g^i) xg^i(a_k)
    = \sum_{i=0}^{r-1} \chi_\alpha(g^i) \chi_1(g^i) g^ix(a_k) \\ 
    &= \sum_{i=0}^{r-1} \chi_{\alpha+1}(g^i) g^i\left( \gamma_{k+1} a_k - \gamma_k \lambda\inv g(a_k)\right) \\
    &= \gamma_k \sum_{i=0}^{r-1}  \left( \lambda^{w_k} \chi_{\alpha+1}(g^i) g^i(a_k) - \lambda\inv \chi_{\alpha+1}(g^i) g^{i+1}(a_k)\right) \\
    &= \gamma_k \sum_{i=0}^{r-1}  \left( \lambda^{w_k} \chi_{\alpha+1}(g^{i+1}) g^{i+1}(a_k) - \lambda\inv \chi_{\alpha+1}(g^{i+1})\chi_{\alpha+1}(g\inv) (g^{i+1}(a_k)\right) \\
    &= \gamma_k \left( \lambda^{w_k}  - \lambda^\alpha \right) \phi_{\alpha+1}(a_k).
\end{align*}
Now set $p=p_k(u,0)$ and $w=w_k + w_{k+1} + \cdots + w_{k+(u-1)}$. 
Let $a=a_{k+u}$. It is easy to verify that $\phi_\alpha(p)\phi_0(a)=\phi_\alpha(pa)$. Now
\begin{align*}
x \cdot \phi_\alpha(pa)
    &= x \cdot \left(\phi_\alpha(p)\phi_0(a)\right)
    = (x \cdot \phi_\alpha(p))(g \cdot \phi_0(a))
        + \phi_\alpha(p)(x \cdot \phi_0(a)) \\
    &= \gamma_k (\lambda^w - \lambda^\alpha)\phi_{\alpha+1}(p)\phi_0(a)
        + \phi_\alpha(p) \gamma_{k+u}\left( \lambda^{w_{k+u}}  - 1 \right) \phi_1(a) \\
    &= \gamma_k \left( (\lambda^w - \lambda^\alpha) + \lambda^t (\lambda^{w_{k+u}}  - 1) \right) \phi_{\alpha+1}(p_k(u,0)) \\
    &= \gamma_k \left( \lambda^{w+w_{k+u}}  - \lambda^\alpha \right) \phi_{\alpha+1}(p_k(u,0)).
\end{align*}
This proves \eqref{cyc_xact1}. The proof of \eqref{cyc_xact2} is similar.


Now assume $u,v \geq 1$.
Let $p=p_k(u,0)$ and $q=p_{k+(u-1)}(0,v)$.
Then $\phi_\alpha(pq)=\phi_\alpha(p)\phi_0(q)$ and
\begin{align*}
x \cdot \phi_\alpha(pq)
    &= x \cdot \left(\phi_\alpha(p)\phi_0(q)\right)
    = (x \cdot \phi_\alpha(p))(g \cdot \phi_0(q))
        + \phi_\alpha(p)(x \cdot \phi_0(q)) \\
    &= \gamma_k (\lambda^{w_k(u,0)} - \lambda^\alpha)\phi_{\alpha+1}(p) \phi_0(q)
        + \phi_\alpha(p)\gamma_{k+u}(\lambda^{w_{k+(u-1)}(0,v)}-1)\phi_1(q) \\
    &= \gamma_k \left( (\lambda^{w_k(u,0)} - \lambda^\alpha) + \lambda^{w_k(u,0)} ( \lambda^{w_{k+(u-1)}(0,v)}-1)\right) \phi_{\alpha+1}(pq) \\
    &= \gamma_k \left(\lambda^{w_k(u,v)}  - \lambda^\alpha \right) \phi_{\alpha+1}(pq)
\end{align*}
where the last equality follows by noting that $w_k(u,0)+w_{k+(u-1)}(0,v)=w_k(u,v)$.
\end{proof}

\begin{proposition}\label{prop.inv_orbits}
Assume Hypothesis \ref{hyp.invariants}.
The invariant ring $(\Pi_Q)^T$ is generated by $\phi_0(e_i)$ and $\phi_0(p_i(u,v))$ such that $w_k(u,v) \equiv 0~\mod r$ and $i \in \set{1,\hdots,\tau-1}$.
\end{proposition}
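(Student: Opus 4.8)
The plan is to reduce $T$-invariance to conditions on the two algebra generators $g$ and $x$, compute the invariants in each graded degree using the basis of character-weighted orbit sums, and finally cut the resulting spanning set down to one representative per $g$-orbit.

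First I would record that $a \in \Pi_Q$ lies in $(\Pi_Q)^T$ exactly when $h \cdot a = \varepsilon(h)\, a$ for every $h \in T$, and that it suffices to verify this on the algebra generators $g$ and $x$: if $g \cdot a = a$ and $x \cdot a = 0$, then for any word $h_1 h_2$ in $g,x$ one has $(h_1 h_2)\cdot a = h_1 \cdot(h_2 \cdot a) = \varepsilon(h_2)\, h_1 \cdot a = \varepsilon(h_1)\varepsilon(h_2)\,a = \varepsilon(h_1 h_2)\,a$, so the invariance condition propagates to all of $T$. Since the action preserves the path-length grading, $(\Pi_Q)^T$ is graded and can be treated degree by degree. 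Degree $0$ is handled by Lemma \ref{lem.inv_deg0}, which gives the $\phi_0(e_i)$ over orbit representatives as a basis of $((\Pi_Q)^T)_0$.

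For degree $\ell \ge 1$, I would expand an arbitrary homogeneous element in the stated basis $\{\phi_\alpha(p_k(u,v)) : u+v=\ell\}$. By \eqref{eq.ginv}, $g \cdot \phi_\alpha(p_k(u,v)) = \lambda^\alpha \phi_\alpha(p_k(u,v))$, and since $\lambda$ is a primitive $r$-th root of unity, $g$-invariance together with the linear independence of the basis forces every term with $\alpha \not\equiv 0 \pmod r$ to drop out, so only $\alpha = 0$ survives. Applying $x$ to such a $g$-fixed combination and invoking Lemma \ref{lem.cyc_xact} gives $x \cdot \phi_0(p_k(u,v)) = \gamma_k(\lambda^{w_k(u,v)} - 1)\,\phi_1(p_k(u,v))$, which lies entirely in the $\alpha = 1$ eigenspace. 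As the $\phi_1(p_k(u,v))$ are again linearly independent and $\gamma_k \neq 0$ under Hypothesis \ref{hyp.invariants}, the image vanishes if and only if every surviving coefficient satisfies $w_k(u,v) \equiv 0 \pmod r$. Hence $(\Pi_Q)^T$ has $\kk$-basis $\{\phi_0(e_i)\} \cup \{\phi_0(p_k(u,v)) : w_k(u,v) \equiv 0 \pmod r\}$; in particular this set spans, and therefore generates, the invariant ring as an algebra.

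Finally I would trim this spanning set to orbit representatives. Because $g \cdot a_i = \mu_i a_{i+d}$, one has $g^{\,t}(p_k(u,v)) = \nu\, p_{k+td}(u,v)$ for a nonzero scalar $\nu$, and since $r = \kappa = n/\gcd(n,d)$ under Hypothesis \ref{hyp.invariants}, the sum defining $\phi_0$ runs over a complete $g$-orbit; thus $\phi_0(p_k(u,v))$ and $\phi_0(p_{k'}(u,v))$ are scalar multiples whenever $k \equiv k' \pmod{\tau}$. The same congruence shows via \eqref{eq.inv_wk} that $w_k(u,v)$ depends only on $k \bmod \tau$, so the condition $w_k(u,v) \equiv 0 \pmod r$ is well defined on orbits and discards no genuine generator. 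Restricting $i$ to the coset representatives indicated in the statement then yields exactly the asserted generating set. The step I expect to be the main obstacle is this last reduction: one must confirm simultaneously that applying $x$ sends the $\alpha=0$ invariants into the single eigenspace spanned by the $\phi_1(p_k(u,v))$ so the coefficients decouple, that the orbit-proportionality holds, and that the well-definedness of $w_k(u,v)$ on orbits is compatible with it, so that passing to representatives neither loses nor double-counts generators.
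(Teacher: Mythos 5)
Your proposal is correct and follows essentially the same route as the paper: degree zero via Lemma \ref{lem.inv_deg0}, expansion of a homogeneous element in the basis $\{\phi_\alpha(p_k(u,v))\}$, the eigenvalue computation \eqref{eq.ginv} to force $\alpha\equiv 0 \pmod r$, and Lemma \ref{lem.cyc_xact} to reduce $x$-invariance to $w_k(u,v)\equiv 0\pmod r$. The extra details you supply (reduction of $T$-invariance to the generators $g,x$, the orbit-proportionality of $\phi_0(p_k(u,v))$ and the well-definedness of $w_k(u,v)$ on orbits) are correct and, if anything, make explicit steps the paper leaves implicit; in particular your condition $w_k(u,v)\equiv\alpha\pmod r$ is the accurate general form of the congruence the paper writes as $u-v\equiv\alpha$.
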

\begin{proof}
By Lemma \ref{lem.inv_deg0}, 
Let $\{ e_0,\hdots,e_{\tau-1}\}$ be a complete set of coset representatives of the $g$-action on $\overline{Q}_0$. Then 
$\{ \phi_0(e_i) : i=0,\hdots,\tau-1\}$ is a $\kk$-basis for $((\Pi_Q)^T)_0$.

For $\ell \geq 1$, a basis for $(\Pi_Q)_\ell$ is
\[ \{ \phi_\alpha(p_i(u,v)) : i=1,\hdots,\tau-1, u+v=\ell\}.\]
Let $y \in (\Pi_Q)_\ell$, $\ell \geq 1$.
Write,
\[ y = \sum_{\alpha=0}^{r-1} \sum_{u+v=\ell} c_\alpha \phi_\alpha(p_i(u,v)) \]
for some $c_\alpha \in \kk$. By Lemma \ref{lem.cyc_xact},
\[ x \cdot y = \sum_{\alpha=0}^{r-1} \sum_{u+v=\ell} c_\alpha' \phi_{\alpha+1}(p_i(u,v))\]
for some $c_\alpha' \in \kk$. If $c_\alpha\neq 0$, then $c_\alpha'=0$ if and only if $u-v \equiv \alpha~\mod n$. On the other hand,
\[ g \cdot y = \sum_{\alpha=0}^{r-1} \sum_{u+v=\ell} \lambda^\alpha c_\alpha \phi_\alpha(p_i(u,v)).\]
It follows that $y$ is $g$-invariant if and only if $y$ is supported only on $\alpha \equiv 0~\mod n$. 
Finally, it is clear from Lemma \ref{lem.phi_add} that the given elements now generate the invariant ring.
\end{proof}

\begin{corollary}\label{cor.1orbit}
Assume Hypothesis \ref{hyp.invariants} and that $\gcd(d,n)=1$. 
Then the invariant ring is generated by 
$1$, $\phi_0(p_0(n,0))$, $\phi_0(p_0(0,n))$, and $\phi_0(p_0(1,1))$.
Hence, $(\Pi_Q)^T = Z(\Pi_Q)$.
\end{corollary}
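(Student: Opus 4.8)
The plan is to push the single-orbit hypothesis through Proposition \ref{prop.inv_orbits} and then recognize the surviving generators as the standard generators of the center. The first step is bookkeeping. Since $\gcd(d,n)=1$ we have $\tau=\gcd(n,d)=1$, so there is a single $g$-orbit on $Q_0$; combined with $r=m$ and Lemma \ref{lem.rot_gamma}\eqref{rot_gam5} this forces $r=m=\kappa=n$. Because the $w_i$ are constant on $g$-orbits and there is now only one orbit, all $w_i$ coincide with a common value $w$, and \eqref{eq.inv_wk} gives $dw\equiv -1 \pmod n$. As $d$ is a unit mod $n$, so is $w$, whence $w_0(u,v)=(u-v)w$ and the divisibility condition $w_0(u,v)\equiv 0\pmod r$ of Proposition \ref{prop.inv_orbits} is equivalent to $u\equiv v\pmod n$.

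Next I would read off the generators. With one orbit, $\phi_0(e_0)=\sum_{i=0}^{n-1}e_i=1$, so degree zero contributes only the unit. In positive degree, Proposition \ref{prop.inv_orbits} says $(\Pi_Q)^T$ is generated by the $\phi_0(p_0(u,v))$ with $u\equiv v\pmod n$. Writing the common residue as $k\in\{0,\dots,n-1\}$ and $u=in+k$, $v=jn+k$ with $i,j\geq 0$, I would apply the multiplication rule of Lemma \ref{lem.phi_add} (and its evident boundary cases, as already used in Lemma \ref{lem.cyc_xact}): since every factor carries character index $0$, all twisting scalars $\lambda^{(v-u)\beta}$ are trivial, and
\[ \phi_0(p_0(u,v)) = \phi_0(p_0(n,0))^{\,i}\,\phi_0(p_0(0,n))^{\,j}\,\phi_0(p_0(1,1))^{\,k}. \]
Thus $(\Pi_Q)^T$ is generated by $1$ and the three elements $A:=\phi_0(p_0(n,0))$, $B:=\phi_0(p_0(0,n))$, $C:=\phi_0(p_0(1,1))$. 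The same lemma yields the single relation $AB=\phi_0(p_0(n,n))=C^n$ (and $BA=AB$), so as an abstract algebra $(\Pi_Q)^T\cong \kk[X,Y,Z]/(XY-Z^n)$.

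Finally I would identify this with $Z(\Pi_Q)$. Using $\mu_0\cdots\mu_{n-1}=\mu_0^*\cdots\mu_{n-1}^*=1$ together with $\gcd(d,n)=1$, the orbit sum defining $A$ (resp.\ $B$) collapses, with every coefficient equal to $1$, to $\sum_{k}a_k a_{k+1}\cdots a_{k+n-1}$ (resp.\ the reversed product), i.e.\ the sum over all vertices of the forward (resp.\ backward) loop of length $n$. A direct computation with the preprojective relation \eqref{eq.preproj} shows that each such sum commutes with every arrow, so $A,B\in Z(\Pi_Q)$, and the same holds for the degree-two loop sum. This gives $(\Pi_Q)^T\subseteq Z(\Pi_Q)$. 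For the reverse inclusion I would invoke the explicit description of the center from \cite{SW1}, namely $Z(\Pi_Q)\cong\kk[X,Y,Z]/(XY-Z^n)$ with generators exactly the forward $n$-loop sum, backward $n$-loop sum, and degree-two loop sum; matching these with $A,B,C$ yields $Z(\Pi_Q)\subseteq(\Pi_Q)^T$ and hence equality.

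The main obstacle is twofold. The reverse inclusion $Z(\Pi_Q)\subseteq(\Pi_Q)^T$ is not formal: it rests on the fact that the center is \emph{no larger} than $\kk[A,B,C]$, for which I would lean on the structure result in \cite{SW1}. The more delicate point is confirming that the degree-two generator $C=\phi_0(p_0(1,1))$ is genuinely central. Since each application of $g$ scales $a_k a_k^*$ by $\mu_k\mu_k^*$, the coefficients appearing in $C$ are powers of $\lambda$ that are constant across vertices precisely when $\mu_i\mu_i^*=1$; centrality of the degree-$n$ generators, by contrast, is automatic because the relevant $\mu$-products run once around the cycle and equal $1$. I would therefore verify that the normalization $\mu_i\mu_i^*=1$ holds (or can be arranged) under Hypothesis \ref{hyp.invariants}, as it is exactly this condition that upgrades the abstract isomorphism $(\Pi_Q)^T\cong Z(\Pi_Q)$ to the literal equality $(\Pi_Q)^T=Z(\Pi_Q)$.
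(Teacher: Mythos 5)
Your route to the generation statement is the same as the paper's: use $\gcd(d,n)=1$ to make all $w_i$ equal to a common unit $w$ modulo $n$, reduce the condition $w_0(u,v)\equiv 0 \pmod r$ to $u\equiv v\pmod n$, and then invoke Proposition \ref{prop.inv_orbits} and Lemma \ref{lem.phi_add} to see that $1$, $\phi_0(p_0(n,0))$, $\phi_0(p_0(0,n))$, $\phi_0(p_0(1,1))$ generate. You are in fact slightly more careful than the paper here, since you record explicitly that $dw\equiv -1\pmod n$ makes $w$ invertible (the paper silently passes from $(u-v)w\equiv 0$ to $u-v\equiv 0$), and you make the relation $AB=C^n$ explicit. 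The paper disposes of the final identification with a single sentence, ``that these elements also generate the center is well-known.''

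The one place where your write-up is not complete is exactly the point you flag at the end: the centrality of $C=\phi_0(p_0(1,1))$. This is a genuine gap, not a routine verification, and it is worth understanding why. Under Hypothesis \ref{hyp.invariants} the only constraint available is Lemma \ref{lemma.rot_desc}\eqref{desc1}, which gives $\mu_i\mu_i^*=\lambda^z$ for some integer $z$; nothing in the hypothesis forces $z=0$ (the product conditions $\mu_0\cdots\mu_{n-1}=\mu_0^*\cdots\mu_{n-1}^*=1$ only give $\lambda^{zn}=1$, which is automatic when $r=n$). Moreover the normalization cannot ``be arranged'': rescaling arrows by an automorphism $a_i\mapsto t_ia_i$, $a_i^*\mapsto t_i^*a_i^*$ compatible with the preprojective relation forces $t_it_i^*$ to be constant, so the quantity $\mu_i\mu_i^*$ is invariant under such conjugation. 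When $z\neq 0$ one has $C=\sum_i\lambda^{zi}\,a_{id}a_{id}^*$, a weighted loop sum that is still a $T$-invariant (by Lemma \ref{lem.cyc_xact}, since $w_0(1,1)=0$) but commutes with $a_j$ only if all the weights agree, i.e.\ only if $z=0$. So without establishing $\mu_i\mu_i^*=1$ you only get an isomorphism $(\Pi_Q)^T\cong\kk[X,Y,Z]/(XY-Z^n)\cong Z(\Pi_Q)$, not the asserted equality of subalgebras; the literal equality requires the extra normalization, which neither your argument nor the paper's one-line appeal actually supplies. You have correctly isolated the weak point of the statement; to finish you would need either to add $\mu_i\mu_i^*=1$ to the hypotheses or to weaken the conclusion to an isomorphism.
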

\begin{proof}
Since $\gcd(d,n)=1$, then it follows from \eqref{eq.inv_wk} that $w_i=w_j$ for all $i,j$. 
Set $w=w_0$. Then $w_k(u,v)= (u-v)w~\mod r$.
Hence, by Proposition \ref{prop.inv_orbits},
$\phi_\alpha(p_i(u,v))$ is invariant if $\alpha \equiv 0~\mod r$ and $u-v \equiv 0~\mod r$. It is then clear that the given elements are invariant. By Lemma \ref{lem.phi_add}, every invariant element is generated by these.
That these elements also generate the center is well-known.
\end{proof}

\providecommand{\bysame}{\leavevmode\hbox to3em{\hrulefill}\thinspace}
\providecommand{\MR}{\relax\ifhmode\unskip\space\fi MR }
\providecommand{\MRhref}[2]{%
  \href{http://www.ams.org/mathscinet-getitem?mr=#1}{#2}
}
\providecommand{\href}[2]{#2}

\end{document}